\crefname{hypothesis}{Hypothesis}{Hypotheses}
\title{The nonlocal Neumann problem\thanks{This work has been supported by the German Research Foundation (DFG) within the Research Training Group 2126: “Algorithmic Optimization”.}}
\author{Leonhard Frerick\thanks{Department of Mathematics, Trier University, Universitätsring 15, Trier, 54296, Germany
  (\email{frerick@uni-trier.de, vollmann@uni-trier.de, vuv@uni-trier.de}).}
\and Christian Vollmann\footnotemark[2]
\and Michael Vu\footnotemark[2]}
\newcommand{\Rd}{\mathbb{R}^{d}} 
\newcommand{\kernel}{\mathcal{K}}
\newcommand{\V}{\mathrm{V}(\Omega;\gamma)}
\newcommand{\Vd}{\mathrm{V}_{0}(\Omega;\gamma)}
\newcommand{\Tr}{\mathrm{Tr}}
\newcommand{\Ls}{{\mathrm{L}^{2}(\Omega)}}
\newcommand{\einf}{ \operatorname*{ess\,\inf}}
\newcommand{\esup}{ \operatorname*{ess\,\sup}}
\newcommand{\La}{\mathcal{L}}
\newcommand{\Neu}{\mathcal{N}}
\begin{document}

\maketitle

\begin{abstract}
The classical local Neumann problem is well studied and solutions of this problem lie, in general, in a Sobolev space. In this work, we focus on \textit{nonlocal} Neumann problems with measurable, nonnegative kernels, whose solutions require less regularity assumptions. For kernels of this kind we formulate and study the weak formulation of the nonlocal Neumann problem and we investigate a nonlocal counterpart of the Sobolev space $H^{1}$ as well as a resulting nonlocal trace space. We further establish, mainly for symmetric kernels, various existence results for the weak solution of the Neumann problem and we discuss related necessary conditions. Both, homogeneous and nonhomogeneous Neumann boundary conditions are considered. In addition to that, we present a new weak formulation of a Robin problem, where we reformulate the Robin problem into a ``regional'' problem.
\end{abstract}

\begin{keywords}
  Nonlocal operators, Neumann problem, Robin problem, nonlocal function space, trace map, trace inequality, Poincaré inequality, regional operator.
\end{keywords}

\begin{AMS}
 	35A23, 47G10, 46E35
\end{AMS}

\section{Introduction and results}\label{sec1}
Long and short range interactions are modeled with the help of nonlocal instead of local differential operators, so by considering nonlocal interactions between sets, whose topological closures are possibly disjoint. A nonlocal convection–diffusion operator $\La$ is typically defined as an integral operator of the form
\begin{equation*}
    \mathcal{L}u(x):=\mathcal{L}_{\gamma}u(x):=\int_{\Rd}u(x)\gamma(x,y)-u(y)\gamma(y,x)\,\mathrm{d}y\quad\text{for }  x\in\Rd,
\end{equation*}
where $\gamma\colon\Rd\times\Rd\to\mathbb{R}$ and $u\colon\Rd\to\mathbb{R}$ are measurable functions and where we may assign the Cauchy principal value to the integral, if necessary. 
In this article, we only consider Lebesgue measurable functions and sets and we denote the $d-$dimensional Lebesgue measure by $\lambda$. 

The operator $\La$ can be regarded a nonlocal counterpart of the local convection–diffusion operator (see, e.g, \cite{gunz_1})
\begin{equation*}
    \mathcal{E}u(x):=\operatorname{div}(A(x)\nabla u(x)-b(x) u(x))+c(x)^{\top}\nabla u(x)\quad\text{for } x\in\Rd,
\end{equation*}
where $A\colon\Rd\to\mathbb{R}^{d\times d}$, $b\colon\Rd\to\Rd$ and $c\colon\Rd\to\Rd$ are functions, $\nabla u$ the gradient of the smooth function $u\colon\Rd\to\mathbb{R}$ and $\operatorname{div}$ is the local divergence operator.

Due to the nonlocal nature of the operator $\mathcal{L}$, it is generally not enough to impose boundary conditions just on $\partial\Omega$, the topological boundary of $\Omega$, in order to guarantee well-posedness of related nonlocal boundary value problems. For instance, given a bounded and open set $\Omega\subset\Rd$ and functions $f\colon \Omega\to\mathbb{R},~g\colon \Rd\setminus\Omega\to\mathbb{R}$, a nonlocal Dirichlet problem can be defined by 
\begin{equation}\label{dir}
    \left\{\begin{aligned}
        \mathcal{L}u&=f\quad \text{on }\Omega,\\
        u&=g\quad \text{on }\Rd\setminus\Omega.
    \end{aligned}\right.
\end{equation}
Problems of this kind are studied for example by Felsinger et al.\,\cite{fels} and Du et al.\,\cite{gunz_1}. For problems with a finite range of nonlocal interactions, Du et al.\,\cite{gunz_1} have shown, that it already suffices to impose the Dirichlet boundary condition only on a certain subset $\Omega_{I}\subset\Rd\setminus\Omega$ for establishing well--posedness results of problem \eqref{dir}. 

Du et al.\,\cite{gunz_1} have also introduced a nonlocal Neumann operator and the corresponding nonlocal Neumann problem
\begin{equation}\label{du}
    \left\{\begin{aligned}
        \mathcal{L}u(x)&=\int_{\Rd}(u(x)-u(y))\gamma(y,x)\,\mathrm{d}y&=f(x)\quad &\text{for }x\in\Omega,\\
        \Neu_{D} u(y)&=\int_{\Omega\cup\Omega_{I}}(u(x)-u(y)\gamma(y,x))\,\mathrm{d}x&=g(y)\quad &\text{for }y\in\Omega_{I},
    \end{aligned}\right.
\end{equation}
where the Neumann--type boundary condition is imposed on the subset $\Omega_{I}\subset\Rd\setminus\Omega$, the so-called \textit{interaction domain}, given by
\begin{equation*}
    \Omega_{I}:=\{y\in\Rd\setminus\Omega\text{ such that } \gamma(x,y)\neq0 \text{ for some }x\in\Omega\}.
\end{equation*}
Besides this, various approaches to nonlocal Neumann problems have been investigated in the literature. For instance, Barles et al.\,\cite{vis_1} formulate a Neumann--type boundary value problem by imposing a reflection condition on $\Rd\setminus\Omega$. Cortazar et al.\,\cite{CORTAZAR2007360} derive a Neumann--type boundary value problem from a decomposition of the operator $\La$. Dipierro et al.\,\cite{dip} introduce and study the Neumann operator
\begin{equation}\label{def:NeuOpDip}
    \Neu_{s}u(y)=\int_{\Omega}\frac{u(y)-u(x)}{\|x-y\|^{d+2s}}\,\mathrm{d}x\quad\text{for } s\in(0,1), ~ y\in\Rd\setminus\Omega, 
\end{equation}
and the resulting fractional Neumann problem 
\begin{equation}\label{frac}
    \left\{\begin{aligned}
        \mathcal{L}_{s}u(x)&=\int_{\Rd}\frac{u(x)-u(y)}{\|x-y\|^{d+2s}}\,\mathrm{d}y&=f(x)\quad &\text{for }x\in\Omega,\\
        \Neu_{s} u(y)&=\int_{\Omega}\frac{u(y)-u(x)}{\|x-y\|^{d+2s}}\,\mathrm{d}x&=g(y)\quad &\text{for }y\in\Rd\setminus\Omega.
    \end{aligned}\right.
\end{equation}
With the function spaces introduced by Felsinger et al.\,\cite{fels}, Foghem \cite{fog} and Foghem and Kaßmann \cite{kass} study the nonlocal Neumann problem
\begin{equation}\label{levy}
    \left\{\begin{aligned}
        \mathcal{L}_{\nu}u(x)&=\int_{\Rd}(u(x)-u(y))\nu(x-y)\,\mathrm{d}y&=f(x)\quad &\text{for }x\in\Omega,\\
        \Neu_{\nu} u(y)&=\int_{\Omega}(u(x)-u(y))\nu(x-y)\,\mathrm{d}x&=g(y)\quad &\text{for }y\in\Rd\setminus\Omega,
    \end{aligned}\right.
\end{equation}
where $\nu\colon\Rd\setminus\{0\}\to[0,\infty]$ is even and Lévy integrable, i.e., $\nu$ satisfies
\begin{equation*}
    \nu(x)=\nu(-x)\quad \text{for }\Rd\setminus\{0\}\text{ and } \int_{\Rd}\min\{1,\|x\|^{2}\}\nu(x)\,\mathrm{d}x<\infty.
\end{equation*}
Note that for every $s\in(0,1)$ the fractional Laplace kernel is even and Lévy integrable.

We want to remark that, while our studies of the nonlocal Neumann problem were independent of the studies in \cite{fog} and \cite{kass}, some results are similar. In particular the nonlocal integration by parts formula (see \Cref{thm:gauss} and Proposition 4.8 in \cite{fog}), the resulting Hilbert space (see \Cref{corollary:hilb} and see 3.46 in \cite{fog}) and the existence result (see \Cref{thm:exriesz} and Theorem 4.16 in \cite{fog}).

The goal of this article is to connect the Neumann operator \eqref{def:NeuOpDip} introduced by Dipierro et al.\,\cite{dip} and the function spaces introduced by Felsinger et al.\,\cite{fels} with the approach of Du et al.\,\cite{gunz_1} to impose boundary conditions only on a subset of $\Rd\setminus\Omega$. Specifically, we study the weak formulation of the nonlocal Neumann problem
\begin{equation}\tag{P}\label{NP}
    \left\{\begin{aligned}
        \mathcal{L}u(x)&:=\mathcal{L}_{\gamma}u(x)&=\int_{\Rd}u(x)\gamma(x,y)-u(y)\gamma(y,x)\,\mathrm{d}y&=f(x)\quad &\text{for }x\in\Omega,\\
        \Neu u(y)&:=\Neu_{\gamma} u(y)&=\int_{\Omega}u(y)\gamma(y,x)-u(x)\gamma(x,y)\,\mathrm{d}x&=g(y)\quad &\text{for }y\in\widehat{\Gamma},
    \end{aligned}\right.
\end{equation}
where 
\begin{equation*}
    \gamma\in\kernel:=\{k\colon\Rd\times\Rd\to[0,\infty)\text{ is measurable}\}
\end{equation*}
is referred to as \textit{kernel} and the \textit{nonlocal boundary} $\widehat{\Gamma}$ is given by
\begin{equation*}
    \widehat{\Gamma}:=\widehat{\Gamma}(\Omega,\gamma):= \Big\{y\in\Rd\setminus\Omega\colon\int_{\Omega}\gamma(y,x)+\gamma(x,y)\mathrm{d}x>0\Big\}.
\end{equation*}
If the kernel $\gamma\in\kernel$ vanishes identically in the complement of $\Omega\times\Omega$, then we call $\gamma$ \textit{regional} and we say that problem \cref{NP} is a \textit{regional problem}. Note that we impose the boundary condition in problem \eqref{NP} on $\widehat{\Gamma}$, a possibly strict subset of $\Rd\setminus\Omega$.

As in \cite{dip}, \cite{fog} and \cite{kass}, a fundamental aspect of dealing with problem \eqref{NP} is a variational formulation facilitated by the nonlocal integration by parts formula
\begin{align*}
    &\int_{\Omega}\La u(x)\,v(x)\,\mathrm{d}x+\int_{\widehat{\Gamma}}\Neu u(y)\,v(y)\,\mathrm{d}y\\
    =&\frac{1}{2}\int_{\Omega}\int_{\Omega}(u(x)\gamma(x,y)-u(y)\gamma(y,x))(v(x)-v(y))\,\mathrm{d}y\,\mathrm{d}x\\
    &+\int_{\Omega}\int_{\widehat{\Gamma}}(u(x)\gamma(x,y)-u(y)\gamma(y,x))(v(x)-v(y))\,\mathrm{d}y\,\mathrm{d}x,
\end{align*}
for sufficiently regular measurable functions $u,v\colon \Rd\to\mathbb{R}$. We prove this formula in the next section.  

Finally, let us provide a physical interpretation of problem \eqref{NP} by following along the lines of \cite{nolocal_vector_calculus}. A probabilistic perspective on the problem at hand can be found in \cite{dip}. Here, we understand the Neumann operator $\Neu$ as a nonlocal flux density. More precisely, for a measurable function $u\colon\Rd\to\mathbb{R}$ and a point $y\in\Rd$, the \textit{flux density from $\Omega$ to $y$} is given by
\begin{align*}
    \Neu u (y)=\int_{\Omega}u(y)\gamma(y,x)-u(x)\gamma(x,y)\,\mathrm{d}x.
\end{align*}
Now, by integrating $\Neu u(y)$ over a measurable set $A\subset\Rd$ we obtain the \textit{flux from $\Omega$ into $A$}, namely,
\begin{equation*}
    \int_{A}\Neu u(y)\,\mathrm{d}y.
\end{equation*}
An illustration is provided in Figure \ref{fig:fluX}.
\begin{figure}\label{fig:fluX}
\centering
\begin{minipage}{0.45\textwidth}
        \begin{tikzpicture}[yscale=0.15,xscale=0.3]
    		\fill [black!10] (0, 0) ellipse (8 and 6) ;
    		\fill [black!20] (0, 0) ellipse (6 and 4) node[black] {$\Omega$};
    		\node (y) at (6.8,2.4) {};
    		\node () at (1.5,-5) {$\widehat{\Gamma}$};
    		\fill (y) circle (4pt)  node[right,black] {$y$};
    		\draw[<-,line width=1pt,dotted] (6.7,2.4) to[in=35,out=180] (-5,0) node[right]{}; 
    		\draw[<-,line width=1pt,dotted] (6.8,2.2) to[in=45,out=246] (1,-2) node[right]{}; 
        \end{tikzpicture}
\end{minipage}
\hspace*{0.5cm}
	\begin{minipage}{0.45\textwidth}
\begin{tikzpicture}[xscale=0.75,yscale=0.6]
    		\fill [black!20] (0, 0) ellipse (2 and 1) node[black] {$\Omega$};
            \fill[](5,0)ellipse (20pt and 7pt);
            \draw [](4,-0.2) node{$A$};
            \draw[->, thick,,dotted] (1.75,0.4) to[out=15, in=135] (4.5,0.25);
    \end{tikzpicture}
\end{minipage}
\caption{Flux density from $\Omega$ to $y\in\widehat{\Gamma}$ (left) and Flux from $\Omega$ to $A$ (right).}
\end{figure}
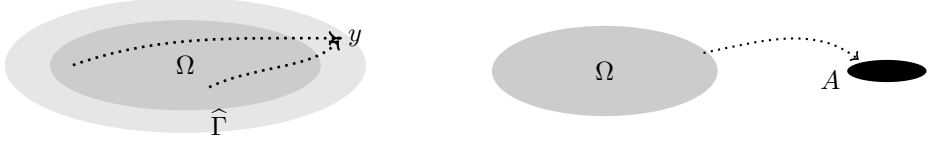
Thereby it follows that the nonlocal boundary $\widehat{\Gamma}$ consists of those points in $\Rd\setminus\Omega$, for which a flux from $\Omega$ is possible.

The remainder of this article is organized as follows. In Section \ref{sec:var}, we define the test function space $\V$, the nonlocal counterpart of the Sobelev space $\mathrm{H}^{1}(\Omega)$, and show that it is a separable Hilbert space. In Section \ref{ssec:gaus}, we prove a nonlocal integration by parts formula. And in Section \ref{ssub:sym}, we study the resulting weak formulation of the Neumann problem for symmetric kernels (see \Cref{thm:exriesz} and \Cref{thm:exnonhom}) and remark on the more demanding nonsymmetric case (see \Cref{rem:nons}). 
While Section \ref{sec:poin} is dedicated to the nonlocal Poincar\'e  inequality, we introduce a nonlocal trace space in Section \ref{sec:tr}. Finally, in Section \ref{sec:reg}, we discuss Robin boundary conditions, where we incorporate the boundary condition into the kernel to obtain a regional problem. 


\section{Variational analysis}\label{sec:var}

\subsection{Function space}\label{ssec:fun}

For a nonempty, open set $\Omega\subset\Rd$ and a kernel $\gamma\in\kernel$, we define the function space which was first introduced in \cite{fels},
\begin{align*}
    \mathcal{V}(\Omega;\gamma)&:=\left\{v\colon \Rd\to\mathbb{R} \text{ measurable }\colon\|v\|_{\mathcal{V}(\Omega;\gamma)}<\infty\right\}, \,\text{where}\\
    \|v\|_{\mathcal{V}(\Omega;\gamma)}^{2}&:=\int_{\Omega}v^{2}(x)+\int_{\Rd}(v(x)-v(y))^{2}\gamma(y,x)\,\mathrm{d}y\,\mathrm{d}x.
\end{align*}
We further define a nonlocal boundary by
\begin{equation}\label{eq:DefinitionGamma}
    \Gamma:=\Gamma(\Omega,\gamma):=\{y\in\Rd\setminus\Omega\colon\int_{\Omega}\gamma(y,x)\mathrm{d}x>0\}.
\end{equation}
Then for $\gamma^{\top}(x,y):=\gamma(y,x)$, we have $\widehat{\Gamma}(\Omega,\gamma)=\Gamma(\Omega,\gamma)\cup\Gamma(\Omega,\gamma^{\top})$. Especially for a symmetric $\gamma\in\kernel$, i.e., $\gamma=\gamma^{\top}$, we obtain $\widehat{\Gamma}(\Omega,\gamma)=\Gamma(\Omega,\gamma)=\Gamma(\Omega,\gamma^{\top})$. We define the symmetric bilinear form $\mathfrak{B}\colon \mathcal{V}(\Omega;\gamma)\times \mathcal{V}(\Omega;\gamma)\to \mathbb{R}$ by
\begin{align*}
	\mathfrak{B}(u,v):=\mathfrak{B}_{\gamma}(u,v):=&\frac{1}{2}\int_{\Omega}\int_{\Omega}(u(x)-u(y))(v(x)-v(y))\,\gamma(y,x)\,\mathrm{d}y\,\mathrm{d}x\\
	&+\int_{\Omega}\int_{\Gamma}(u(x)-u(y))(v(x)-v(y))\,\gamma(y,x)\,\mathrm{d}y\,\mathrm{d}x,
\end{align*}
and the mapping $\langle \cdot,\cdot\rangle_{\mathcal{V}(\Omega;\gamma)}\colon \mathcal{V}(\Omega;\gamma)\times \mathcal{V}(\Omega;\gamma)\to\mathbb{R}$,
\begin{align*}
    \langle u,v\rangle_{\mathcal{V}(\Omega;\gamma)}:= \int_{\Omega}u(x)v(x)+\int_{\Omega \cup\Gamma}(u(x)-u(y))(v(x)-v(y))\gamma(y,x)\,\mathrm{d}y\,\mathrm{d}x.
\end{align*}
We observe that $\langle \cdot,\cdot\rangle_{\mathcal{V}(\Omega;\gamma)}$ is a semi-inner-product and therefore $\langle\cdot,\cdot\rangle_{\mathcal{V}(\Omega;\gamma)}$ induces a seminorm $\|\cdot\|_{\mathcal{V}(\Omega;\gamma)}$ on $\mathcal{V}(\Omega;\gamma)$. Furthermore, $\mathfrak{B}$ is bounded by definition and for all functions $u,v\in \mathcal{V}(\Omega;\gamma)$ we have 
\begin{equation}\label{equ:equiv}
    \frac{1}{2} \langle  u,u\rangle_{\mathcal{V}(\Omega;\gamma)}\leqslant \int_{\Omega}u^{2}(x)\,\mathrm{d}x + \mathfrak{B}(u,u)\leqslant \langle  u,u\rangle_{\mathcal{V}(\Omega;\gamma)}
\end{equation}
 and
 \begin{equation*}
    \begin{split}
        &\int_{\Omega}\int_{\Gamma}(u(x)-u(y))(v(x)-v(y))\,\gamma(y,x)\,\mathrm{d}y\,\mathrm{d}x\\
        =&\int_{\Omega}\int_{\Rd\setminus\Omega}(u(x)-u(y))(v(x)-v(y))\,\gamma(y,x)\,\mathrm{d}y\,\mathrm{d}x.
    \end{split} 
\end{equation*}
By following the argumentation made in \cite[Proposition 3.1]{dip}, \cite[Lemma 2.3]{fels}, \cite[Therorem 3.1]{wloka} and \cite[Theorem 3.46]{fog}, we obtain:
\begin{theorem}\label{thm:comp}
    Let $\Omega\subset \Rd$ be a nonempty and open and $\gamma\in\kernel$. Then $\mathcal{V}(\Omega;\gamma)$ is a complete seminormed vector space with respect to $\|\cdot\|_{\mathcal{V}(\Omega;\gamma)}$ and $ \langle \cdot,\cdot\rangle_{\mathcal{V}(\Omega;\gamma)}$ is a semi-inner-product on $\mathcal{V}(\Omega;\gamma)$.
\end{theorem}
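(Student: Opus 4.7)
Bilinearity and symmetry of $\langle\cdot,\cdot\rangle_{\mathcal{V}(\Omega;\gamma)}$ are immediate, and non-negativity follows since it is the sum of two non-negative integrals, so it is a semi-inner-product. By the definition of $\Gamma$, one has $\int_\Omega\gamma(y,x)\,\mathrm{d}x=0$ for a.e.\ $y\in\Rd\setminus(\Omega\cup\Gamma)$, and Fubini then gives $\langle v,v\rangle_{\mathcal{V}(\Omega;\gamma)}=\|v\|_{\mathcal{V}(\Omega;\gamma)}^2$, so the induced seminorm is exactly $\|\cdot\|_{\mathcal{V}(\Omega;\gamma)}$.

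\textbf{Completeness via isometric embedding.} The strategy is to embed $\mathcal{V}(\Omega;\gamma)$ isometrically into a Hilbert space and lift convergence back. Define
\[
\Phi\colon\mathcal{V}(\Omega;\gamma)\longrightarrow L^2(\Omega)\oplus L^2\bigl(\Omega\times\Rd,\,\gamma(y,x)\,\mathrm{d}y\,\mathrm{d}x\bigr),\qquad \Phi(v):=\bigl(v|_\Omega,\;(x,y)\mapsto v(x)-v(y)\bigr).
\]
Then $\|\Phi(v)\|^2=\|v\|_{\mathcal{V}(\Omega;\gamma)}^2$, and the target is a Hilbert space. For a Cauchy sequence $(v_n)\subset\mathcal{V}(\Omega;\gamma)$, $\Phi(v_n)$ converges to some $(v,w)$, and completeness reduces to constructing a measurable $\tilde v\colon\Rd\to\mathbb{R}$ with $\tilde v|_\Omega=v$ a.e.\ and $\tilde v(x)-\tilde v(y)=w(x,y)$ for $\gamma(y,x)\,\mathrm{d}y\,\mathrm{d}x$-a.e.\ $(x,y)$; then $\Phi(\tilde v)=(v,w)$ and hence $\|v_n-\tilde v\|_{\mathcal{V}(\Omega;\gamma)}\to 0$.

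\textbf{Pointwise lift.} After passing to a subsequence, $v_n\to v$ Lebesgue-a.e.\ on $\Omega$ and $v_n(x)-v_n(y)\to w(x,y)$ pointwise for $\gamma(y,x)\,\mathrm{d}y\,\mathrm{d}x$-a.e.\ $(x,y)\in\Omega\times\Rd$. The $y$-marginal of the weighted measure has density $y\mapsto\int_\Omega\gamma(y,x)\,\mathrm{d}x$, which is strictly positive precisely on $\Omega\cup\Gamma$; Fubini then shows that for a.e.\ $y\in\Omega\cup\Gamma$ the $x$-slice admits some $x\in\Omega$ with $\gamma(y,x)>0$, $v_n(x)\to v(x)$, and $v_n(x)-v_n(y)\to w(x,y)$ simultaneously. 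For any such $y$, $v_n(y)$ has a pointwise limit. Setting $\tilde v(y):=\limsup_{n\to\infty}v_n(y)$ for all $y\in\Rd$ yields a measurable function agreeing with this pointwise limit a.e.\ on $\Omega\cup\Gamma$; a direct computation then shows $\tilde v(x)-\tilde v(y)=w(x,y)$ almost everywhere with respect to $\gamma(y,x)\,\mathrm{d}y\,\mathrm{d}x$.

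\textbf{Main obstacle.} The delicate step is exactly this last pointwise lift: turning the weighted $L^2$-limit $w$ of the differences into the difference of pointwise limits of a single measurable function on $\Rd$. The argument must coordinate three null sets---the $L^2(\Omega)$-null set, the $\gamma\,\mathrm{d}y\,\mathrm{d}x$-null set, and a Fubini null set in $y$. The saving grace is that values of $\tilde v$ on $\Rd\setminus(\Omega\cup\Gamma)$ are invisible to both components of $\Phi$, so one can freely take the $\limsup$ construction on all of $\Rd$ and still recover $\Phi(\tilde v)=(v,w)$.
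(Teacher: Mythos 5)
Your proof takes the same overall route as the paper: embed $\mathcal{V}(\Omega;\gamma)$ isometrically into a product of Hilbert spaces ($\mathrm{L}^{2}(\Omega)$ times a weighted or unweighted $\mathrm{L}^{2}$ of difference quotients), extract a pointwise-a.e.\ convergent subsequence, and reconstruct a single measurable function $\tilde v\colon\Rd\to\mathbb{R}$ whose image under the embedding equals the limit pair $(v,w)$. The one genuinely different detail is the reconstruction on $\Gamma$: you set $\tilde v:=\limsup_{n}v_{n}$, which is measurable for free and agrees with the pointwise limit wherever the latter exists, whereas the paper averages $x\mapsto v(x)-w(y,x)/\sqrt{\gamma(y,x)}$ over a chosen set $\Omega_{y}\subset\{x\in\Omega:\gamma(y,x)>0\}$ of positive finite measure and then applies dominated convergence. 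These are equivalent; the $\limsup$ device is arguably a touch cleaner since it sidesteps any measurable-selection concerns about the family $\{\Omega_{y}\}_{y}$.

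One claim in your argument is not correct as stated: the $y$-marginal density $y\mapsto\int_{\Omega}\gamma(y,x)\,\mathrm{d}x$ is strictly positive on $\Gamma$ by the very definition \cref{eq:DefinitionGamma}, but it may vanish on a positive-measure subset of $\Omega$ (for instance if $\gamma\equiv 0$ on $\Omega\times\Omega$ while $\gamma>0$ off $\Omega\times\Omega$, which is allowed for $\gamma\in\kernel$). Consequently ``for a.e.\ $y\in\Omega\cup\Gamma$ the $x$-slice admits some $x\in\Omega$ with $\gamma(y,x)>0$'' fails for those $y\in\Omega$. This does not sink the proof, because on $\Omega$ you already have $v_{n}\to v$ Lebesgue-a.e.\ from the $\mathrm{L}^{2}(\Omega)$ convergence and do not need the slice argument there. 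You should simply restrict the Fubini/slice step to $y\in\Gamma$ and invoke the $\mathrm{L}^{2}(\Omega)$ limit on $\Omega$, which is how the paper proceeds; the identity $\tilde v(x)-\tilde v(y)=w(x,y)$ for $\gamma\,\mathrm{d}y\,\mathrm{d}x$-a.e.\ $(x,y)$ then follows exactly as you say, since pairs with $\gamma(y,x)=0$ are null for the weighted measure.
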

\begin{proof}
    
    As mentioned before, $(\mathcal{V}(\Omega;\gamma),\|\cdot\|_{\mathcal{V}(\Omega;\gamma)})$ is a seminormed vector space, because $ \langle \cdot,\cdot\rangle_{\mathcal{V}(\Omega;\gamma)}$ is bilinear and positive semi-definite, i.e.,\, $ \langle \cdot,\cdot\rangle_{\mathcal{V}(\Omega;\gamma)}$ is a semi-inner-product on $\mathcal{V}(\Omega;\gamma)$.
    
    In order to show that $(\mathcal{V}(\Omega;\gamma),\|\cdot\|_{\mathcal{V}(\Omega;\gamma)})$ is complete, let $(v_{n})_{n\in\mathbb{N}}$ be a Cauchy sequence with respect to the seminorm $\|\cdot\|_{\mathcal{V}(\Omega;\gamma)}$. Then it remains to show that $(v_{n})_{n\in\mathbb{N}}$ converges to a function $v\in\mathcal{V}(\Omega;\gamma)$ with respect to $\|\cdot\|_{\mathcal{V}(\Omega;\gamma)}$. Since $(v_{n})_{n\in\mathbb{N}}$ is Cauchy, we have
    \begin{align*}
        &\lim\limits_{k,\ell\to\infty}\int_{\Omega}(v_{k}(x)-v_{\ell}(x))^{2}\,\mathrm{d}x=0,\\
        \text{and}\quad  &\lim\limits_{k,\ell\to\infty}\int_{\Omega}\int_{\Rd}(v_{k}(x)-v_{k}(y)-(v_{\ell}(x)-v_{\ell}(y)))^{2}\gamma(y,x)\,\mathrm{d}y\,\mathrm{d}x=0.
    \end{align*}
    Hence, due to the completeness  of $\mathrm{L}^{2}(\Omega)$ and $\mathrm{L}^{2}(\Omega\times\Rd)$, there is a $v\in\mathrm{L}^{2}(\Omega)$ and $w\in \mathrm{L}^{2}(\Rd\times\Omega)$ with 
    \begin{align*}
        &\lim\limits_{k\to\infty}\int_{\Omega}(v_{k}(x)-v(x))^{2}\,\mathrm{d}x=0,\\
        \text{and}\quad  &\lim\limits_{k\to\infty}\int_{\Omega}\int_{\Rd}((v_{k}(x)-v_{k}(y))\sqrt{\gamma(y,x)}-w(y,x))^{2}\,\mathrm{d}y\,\mathrm{d}x=0.
    \end{align*}
     We now choose a subsequence $(v_{n_{\ell}})_{\ell\in\mathbb{N}}$, such that we have
     \begin{align*}
        &\lim\limits_{\ell\to \infty}v_{n_{\ell}}(x)=v(x)\quad&&\text{for a.e.\;}x\in\Omega,&\\
        \text{and}\quad&\lim\limits_{\ell\to \infty}(v_{n_{\ell}}(x)-v_{n_{\ell}}(y))\sqrt{\gamma(y,x)}=w(y,x)\quad &&\text{for a.e.\;}(y,x)\in\Rd\times\Omega.&
    \end{align*}
    Without loss of generality we now assume that $\Omega\neq\Rd$. In view of \eqref{eq:DefinitionGamma}
    we set $v(y)=0$ for $y\in\Rd\setminus(\Omega\cup\Gamma)$. For a.e.\;$y\in \Gamma$ we choose a measurable subset
    \begin{equation*}
        \Omega_{y}\subset\{x\in\Omega \text{ such that }\gamma(y,x)>0\}\subset\Rd\quad \text{with}\quad 0<\lambda(\Omega_{y})<\infty.
    \end{equation*}
    The existence of such subsets is given by the definition of $\Gamma$ and the $\sigma$-finiteness of the Lebesgue measure on $\Rd$. We set
    \begin{align*}
        v(y)=\frac{1}{\lambda(\Omega_{y})}\int_{\Omega_{y}}v(x)-\frac{w(y,x)}{\sqrt{\gamma(y,x)}}\,\mathrm{d}x\text{ for a.e.\;}y\in \Gamma.
    \end{align*}
    Because for any $n\in\mathbb{N}$, a.e.\;$y\in\Gamma$ and a.e.\;$x\in\Omega_{y}$ we have
    \begin{align*}
      \vert v_{n}(x)-\frac{(v_{n}(x)-v_{n}(y))\sqrt{\gamma(y,x)}}{\sqrt{\gamma(y,x)}} \vert = \vert v_{n}(y)\vert<\infty.
    \end{align*}
    Thus, by Lebesgue's Dominated Convergence Theorem we see that, for a.e.\;$y\in \Gamma$,
    \begin{align*}
        v(y)=\frac{1}{\lambda(\Omega_{y})}\int_{\Omega_{y}}v(x)-\frac{w(y,x)}{\sqrt{\gamma(y,x)}}\,\mathrm{d}x=\lim\limits_{\ell\to\infty}\frac{1}{\lambda(\Omega_{y})}\int_{\Omega_{y}}v_{n_{\ell}}(y)\,\mathrm{d}x=\lim\limits_{\ell\to\infty}v_{n_{\ell}}(y).
    \end{align*}
    Recalling that $y\in\Rd\setminus(\Omega\cup\Gamma)$ implies $\gamma(y,x)=0$ for a.e.\;$x\in \Omega$, we conclude that for a.e.\;$(y,x)\in\Rd\times \Omega$ we have
    \begin{align*}
        w(y,x)=\lim\limits_{\ell\to \infty}(v_{n_{\ell}}(x)-v_{n_{\ell}}(y))\sqrt{\gamma(y,x)}=(v(x)-v(y))\sqrt{\gamma(y,x)}.
    \end{align*}
    Now we have found a function $v\in \mathcal{V}(\Omega;\gamma)$, for which  $\lim\limits_{\ell\to\infty}\|v_{n_{\ell}}-v\|_{\mathcal{V}(\Omega;\gamma)}=0$ holds and because $(v_{n})_{n\in\mathbb{N}}$ is Cauchy with respect to $\|\cdot\|_{\V}$, we obtain $\lim\limits_{n\to\infty}\|v_{n}-v\|_{\mathcal{V}(\Omega;\gamma)}=0$ as well. Therefore $(\mathcal{V}(\Omega;\gamma),\|\cdot\|_{\mathcal{V}(\Omega;\gamma)})$ is complete.
\end{proof}

In the same manner as Lebesgue spaces are treated, we now identify the elements of $\mathcal{V}(\Omega;\gamma)$ with their respective equivalent class. For this reason we define
\begin{align*}
    \mathrm{N}:=\{v\in \mathcal{V}(\Omega;\gamma)\colon v=0 \text{ a.e.\;on }\Omega\cup\Gamma\}.
\end{align*}
By definition of $\|\cdot\|_{\mathcal{V}(\Omega;\gamma)}$ we have $u\in \operatorname{ker}(\|\cdot\|_{\mathcal{V}(\Omega;\gamma)})$ if and only if  $u=0$ a.e.\;in $\Omega$ and
\begin{align*}
   \int_{\Omega}\int_{\Gamma}u^{2}(y)\gamma(y,x)\,\mathrm{d}y\,\mathrm{d}x=0,
\end{align*}
so that $\mathrm{N}=\operatorname{ker}(\|\cdot\|_{\mathcal{V}(\Omega;\gamma)})$. Now, by defining
\begin{equation*}
    [u]:=u+\mathrm{N}:=\{v\in\mathcal{V}(\Omega;\gamma)\colon v=u\text{ a.e.\;on }\Omega\cup\Gamma\}\quad\text{for }u\in\mathcal{V}(\Omega;\gamma)
\end{equation*}
the corresponding quotient space is given by
\begin{equation*}
    \V:=\mathcal{V}(\Omega;\gamma)/\mathrm{N}=\{[u]\colon u\in\mathcal{V}(\Omega;\gamma)\}.
\end{equation*}
For any $ u,v\in\mathcal{V}(\Omega;\gamma)$ we then have $\langle u_{1},v_{1}\rangle_{\mathcal{V}(\Omega;\gamma)}= \langle u_{2},v_{2}\rangle_{\mathcal{V}(\Omega;\gamma)}$ and $\mathfrak{B}(u_{1},v_{1})=\mathfrak{B}(u_{2},v_{2})$ for all $u_{1},u_{2}\in [u]$ and $v_{1},v_{2}\in [v]$. This implies that, that both mappings $\langle \cdot,\cdot\rangle_{\V}\colon \V\times\V\to\mathbb{R} $ and $\mathfrak{B}\colon \V\times\V\to\mathbb{R} $ defined by 
\begin{align*}
    \langle [u],[v]\rangle_{\V} :=\langle u,v\rangle_{\mathcal{V}( \Omega;\gamma)} \quad\text{and}\quad \mathfrak{B}([u],[v]):=\mathfrak{B}(u,v) \text{ for }u\in[u],v\in[v]
\end{align*}
are well-defined. And for $v\in\mathcal{V}(\Omega;\gamma)$ we have $\langle [v],[v]\rangle_{\V} =0$ if and only if $[v]=\mathrm{N}$. 

Although the elements of $\V$ are in fact equivalent classes and not functions, we consider the elements of $\V$ to be functions which are considered identical, if they are representatives of the same equivalent class. To be more precise, in our view the elements of $\V$ are functions which are defined a.e.\;on $\Omega\cup\Gamma$. 

Thanks to \Cref{thm:comp}, we therefore conclude:
\begin{corollary}\label{corollary:hilb}
    Let $\Omega\subset \Rd$ be nonempty and open and $\gamma\in\kernel$. Then $\V$ is a separable Hilbert space with the inner product $\langle \cdot,\cdot\rangle_{\V}$. In particular, for every $D\in(\V)'$ there exists a unique $u\in\V$ such that for all $v\in\V$ we have
    \begin{equation*}
        D(v)=\int_{\Omega}u(x)v(x)\,\mathrm{d}x+\mathfrak{B}(u,v).
    \end{equation*}
\end{corollary}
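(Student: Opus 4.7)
The plan is to deduce the corollary from Theorem \ref{thm:comp} in three steps: pass to the quotient to obtain a Hilbert space, use an isometric embedding into an $\mathrm{L}^2$-space to obtain separability, and invoke the Riesz representation theorem on an equivalent inner product.

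First, I would check that $(\V, \langle\cdot,\cdot\rangle_\V)$ is a Hilbert space. From the discussion right before the statement, $\langle \cdot,\cdot\rangle_\V$ is a well-defined symmetric bilinear form on the quotient $\V=\mathcal{V}(\Omega;\gamma)/\mathrm{N}$, and since $\mathrm{N}=\operatorname{ker}(\|\cdot\|_{\mathcal{V}(\Omega;\gamma)})$, it is positive definite on $\V$. For completeness, a standard quotient argument applies: given a Cauchy sequence $([u_n])_{n\in\mathbb{N}}$ in $\V$, extract a subsequence with $\|[u_{n_{k+1}}]-[u_{n_k}]\|_\V<2^{-k}$, pick representatives $v_k\in[u_{n_k}]$, and observe that $(v_k)$ is Cauchy in $\mathcal{V}(\Omega;\gamma)$ with respect to the seminorm. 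Theorem \ref{thm:comp} then gives a seminorm-limit $v\in\mathcal{V}(\Omega;\gamma)$, and $[v]$ is the desired limit.

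Next, for separability, I would consider the mapping
\begin{equation*}
    T\colon \V\to \mathrm{L}^2(\Omega)\oplus \mathrm{L}^2(\Omega\times\Rd),\quad [u]\mapsto \bigl(u|_\Omega,\,(u(x)-u(y))\sqrt{\gamma(y,x)}\bigr).
\end{equation*}
This is well-defined on equivalence classes (two representatives of $[u]$ agree a.e.\ on $\Omega\cup\Gamma$, and $\gamma(y,x)=0$ for a.e.\ $(y,x)\in(\Rd\setminus(\Omega\cup\Gamma))\times\Omega$), linear, and by construction an isometry onto its image. Since $\V$ is complete, $T(\V)$ is a closed subspace of a separable Hilbert space and is therefore itself separable; pulling back a countable dense set through $T^{-1}$ yields separability of $\V$.

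Finally, for the Riesz representation part, define the form $\mathfrak{A}\colon \V\times\V\to\mathbb{R}$ by
\begin{equation*}
    \mathfrak{A}(u,v):=\int_{\Omega}u(x)v(x)\,\mathrm{d}x+\mathfrak{B}(u,v).
\end{equation*}
It is symmetric and bilinear, and by the two-sided bound \eqref{equ:equiv}, $\mathfrak{A}(u,u)$ is equivalent to $\|u\|_\V^2$, so $\mathfrak{A}$ is an inner product on $\V$ inducing the same topology as $\langle\cdot,\cdot\rangle_\V$. Consequently $(\V,\mathfrak{A})$ is also a Hilbert space, its topological dual coincides with $(\V)'$, and the classical Riesz representation theorem produces a unique $u\in\V$ with $D(v)=\mathfrak{A}(u,v)$ for all $v\in\V$, which is the claimed identity.

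The only step requiring any care is the separability argument: one must verify that $T$ is genuinely well-defined and isometric, using the convention $v(y)=0$ on $\Rd\setminus(\Omega\cup\Gamma)$ adopted in the proof of Theorem \ref{thm:comp} together with the definition of $\Gamma$ to control the integrand on $\Rd\setminus\Omega$. All remaining steps are essentially bookkeeping around the quotient construction and the equivalence of norms.
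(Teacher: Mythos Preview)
Your proof is correct and follows essentially the same approach as the paper: completeness via the quotient of the complete seminormed space from Theorem~\ref{thm:comp}, separability via the isometric embedding $u\mapsto (u|_\Omega,(u(x)-u(y))\sqrt{\gamma(y,x)})$ into $\mathrm{L}^2(\Omega)\times\mathrm{L}^2(\Omega\times\Rd)$, and the representation statement via Riesz applied to the equivalent inner product $\mathfrak{A}$ using \eqref{equ:equiv}. The paper's proof is terser but uses exactly these ingredients.
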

\begin{proof}
    Because of \Cref{thm:comp}, the definition of $\V$, \cref{equ:equiv} and the Riesz representation theorem, it solely remains to show that $\V$ is separable. The mapping
    \begin{equation*}
        \mathcal{I}\colon \V\to \mathrm{L}^{2}(\Omega)\times \mathrm{L}^{2}(\Omega\times \Rd),\,u\mapsto \left(u,(u(x)-u(y))\sqrt{\gamma(y,x)}\right)
    \end{equation*}
    is isometric due the definition of the norm in $\V$. Due to the completeness of $\V$, we obtain that $\mathcal{I}(\V)$ is a closed subspace of $ \mathrm{L}^{2}(\Omega)\times \mathrm{L}^{2}(\Omega\times \Rd)$. This product is separable which implies the separability of $\V$.
\end{proof}

\begin{remark} 
    We note that $\gamma\in\kernel$ is not required to be symmetric in this subsection. In fact, also for a nonsymmetric kernel $\gamma\in\kernel$ we have that $ \mathcal{V}(\Omega;\gamma)$, $\langle \cdot,\cdot\rangle_{\mathcal{V}(\Omega;\gamma)}$ and $\mathfrak{B}$ are all well-defined and $\mathcal{V}(\Omega;\gamma)$ is a complete seminormed vector space with respect to $\|\cdot\|_{\mathcal{V}(\Omega;\gamma)}$. And therefore $\V$ is a separable Hilbert space also in this case.
\end{remark}


\subsection{Nonlocal integration by parts}\label{ssec:gaus}

As mentioned earlier, we now show that problem \eqref{NP} has a variational structure.
\begin{theorem}[Nonlocal integration by parts formula]\label{thm:gauss}
    Let $\Omega\subset \Rd$ be open and bounded and $\gamma\in\kernel$. For a given $k\in\kernel$ with $k>0$ and 
    \begin{equation*}
        \left\|\int_{\Rd}k(y,\cdot)\,\mathrm{d}y\right\|_{\mathrm{L}^{\infty}(\Omega)}<\infty,
    \end{equation*}
    set $\widetilde{\gamma}(y,x):=\max\Bigg\{\gamma(y,x),\dfrac{(\gamma(x,y)-\gamma(y,x))^{2}}{k(y,x)}\Bigg\}$. If $u\in\mathrm{V}(\Omega;\widetilde{\gamma})$ satisfies
    \begin{equation*}
        \int_{\Omega}\left(\int_{\Rd}\vert u(x)\gamma(x,y)-u(y)\gamma(y,x)\vert\,\mathrm{d}y\right)^{2}\,\mathrm{d}x<\infty,
    \end{equation*}
    then for all $v\in\mathrm{V}(\Omega;\widetilde{\gamma})$ we have
\begin{equation}\label{eq:nonlocalPI}
    \begin{aligned}
	    &\int_{\Omega}\La u(x)\,v(x)\,\mathrm{d}x+\int_{\widehat{\Gamma}}\Neu u(y)\,v(y)\,\mathrm{d}y\\
	    =&\frac{1}{2}\int_{\Omega}\int_{\Omega}(u(x)\gamma(x,y)-u(y)\gamma(y,x))(v(x)-v(y))\,\mathrm{d}y\,\mathrm{d}x\\
	    &+\int_{\Omega}\int_{\widehat{\Gamma}}(u(x)\gamma(x,y)-u(y)\gamma(y,x))(v(x)-v(y))\,\mathrm{d}y\,\mathrm{d}x.
    \end{aligned}
\end{equation}
    In particular, we obtain
    \begin{equation} \label{eq:nonlocalPI-specialCase}
        \int_{\Omega}-\La u(x)\,\mathrm{d}x=\int_{\widehat{\Gamma}}\Neu u(y)\,\mathrm{d}y.
    \end{equation}
\end{theorem}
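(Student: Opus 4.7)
The plan is to start from the right-hand side of \cref{eq:nonlocalPI}, expand the factor $(v(x)-v(y))$, apply a symmetrization (swap $x\leftrightarrow y$) to the $\Omega\times\Omega$ double integral and Fubini to the $\Omega\times\widehat\Gamma$ double integral, and then reassemble the pieces into the left-hand side. Because $u(x)\gamma(x,y)-u(y)\gamma(y,x)$ is antisymmetric under $x\leftrightarrow y$, the $v(y)$-part of the $\Omega\times\Omega$ integral coincides, after the swap, with the $v(x)$-part, so
\begin{align*}
    &\tfrac12\int_\Omega\!\int_\Omega (u(x)\gamma(x,y)-u(y)\gamma(y,x))(v(x)-v(y))\,\mathrm{d}y\,\mathrm{d}x \\
    =\,&\int_\Omega v(x)\int_\Omega(u(x)\gamma(x,y)-u(y)\gamma(y,x))\,\mathrm{d}y\,\mathrm{d}x.
\end{align*}
Splitting the $\Omega\times\widehat\Gamma$ integral analogously into a $v(x)$- and (via Fubini) a $v(y)$-contribution, the two $v(x)$-parts combine into an integration over $\Omega\times(\Omega\cup\widehat\Gamma)$. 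By definition of $\widehat\Gamma$, both $\gamma(x,\cdot)$ and $\gamma(\cdot,x)$ vanish a.e.\ on $\Rd\setminus(\Omega\cup\widehat\Gamma)$ for a.e.\ $x\in\Omega$, so this inner integral extends to $\Rd$ and becomes $\La u(x)$, while the $v(y)$-contribution is exactly $\int_{\widehat\Gamma}\Neu u(y)\,v(y)\,\mathrm{d}y$.

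The real work is to justify the Fubini applications by establishing absolute integrability of the four double integrals that arise after expanding $(v(x)-v(y))$. For this I would use the pointwise decomposition
\begin{equation*}
    u(x)\gamma(x,y)-u(y)\gamma(y,x) = u(x)(\gamma(x,y)-\gamma(y,x)) + (u(x)-u(y))\gamma(y,x),
\end{equation*}
and control the two contributions separately. The symmetric piece $(u(x)-u(y))\gamma(y,x)(v(x)-v(y))$ is absolutely integrable on both $\Omega\times\Omega$ and $\Omega\times\widehat\Gamma$ by Cauchy--Schwarz, using only $u,v\in\mathrm{V}(\Omega;\widetilde\gamma)\subset\mathrm{V}(\Omega;\gamma)$. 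For the antisymmetric piece, which is where the auxiliary kernel $k$ is essential, I would apply Young's inequality with weights $\sqrt{k(y,x)}$ and $1/\sqrt{k(y,x)}$:
\begin{equation*}
    |u(x)(\gamma(x,y)-\gamma(y,x))(v(x)-v(y))| \leqslant \tfrac12 u(x)^2 k(y,x) + \tfrac12 (v(x)-v(y))^2\tfrac{(\gamma(x,y)-\gamma(y,x))^2}{k(y,x)}.
\end{equation*}
Integrating, the first summand is bounded by $\tfrac12\|u\|_{\mathrm L^2(\Omega)}^2\bigl\|\int_\Rd k(y,\cdot)\,\mathrm{d}y\bigr\|_{\mathrm L^\infty(\Omega)}$, finite by hypothesis, and the second by $\tfrac12\|v\|_{\mathrm{V}(\Omega;\widetilde\gamma)}^2$ in view of the defining inequality $(\gamma(x,y)-\gamma(y,x))^2/k(y,x)\leqslant\widetilde\gamma(y,x)$.

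The main obstacle is precisely this antisymmetric contribution: without any symmetry hypothesis on $\gamma$, the factor $\gamma(x,y)-\gamma(y,x)$ does not pair nicely with $u$ and $v$ through the $\mathrm V$-norm alone, and the construction of $k$ together with the enlarged $\widetilde\gamma$ is tailor-made to force the above Young estimate to close. The additional square-integrability hypothesis on the inner integral defining $\La u$ plays a complementary role: it guarantees that the left-hand side $\int_\Omega\La u\,v\,\mathrm{d}x$ is itself absolutely convergent via Cauchy--Schwarz against $v\in\mathrm L^2(\Omega)$, so the identity is a genuine equality of finite quantities. Once absolute convergence is secured everywhere, the algebraic rearrangement outlined in the first paragraph is purely mechanical.

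Finally, \cref{eq:nonlocalPI-specialCase} follows at once by taking $v\equiv 1$: since $\Omega$ is bounded one has $1\in\mathrm{V}(\Omega;\widetilde\gamma)$, every factor $v(x)-v(y)$ on the right-hand side of \cref{eq:nonlocalPI} vanishes identically, the right-hand side collapses to zero, and a trivial rearrangement gives $\int_\Omega-\La u(x)\,\mathrm{d}x = \int_{\widehat\Gamma}\Neu u(y)\,\mathrm{d}y$.
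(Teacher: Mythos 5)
Your proposal is correct and follows essentially the same route as the paper: both establish absolute integrability via the decomposition $u(x)\gamma(x,y)-u(y)\gamma(y,x)=u(x)(\gamma(x,y)-\gamma(y,x))+(u(x)-u(y))\gamma(y,x)$, both exploit the auxiliary kernel $k$ together with the enlarged $\widetilde\gamma$ to tame the antisymmetric term (the paper via Cauchy--Schwarz with the $\sqrt{k}$, $1/\sqrt{k}$ split, you via Young's inequality — an immaterial variant), and both then split the $y$-integral into $\Omega$ and $\widehat\Gamma$, symmetrize on $\Omega\times\Omega$ and apply Fubini, before taking $v\equiv1$ for the special case. The paper additionally uses the $L^2$-hypothesis on $\La u$ to dominate the leading $v(x)$-contribution via Hölder after a triangle-inequality reduction, which you also correctly identify as the purpose of that hypothesis, so there is no gap.
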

\begin{proof}
    Our desired result will be obtained via Fubini's Theorem. Note, that by our assumptions we have $\La u\in\Ls$ and by applying Hölder's inequality we get for the first term in \eqref{eq:nonlocalPI} that $\int_{\Omega}\La u(x)\,v(x)\,\mathrm{d}x\leqslant \|\La u\|_{\Ls}\|v\|_{\Ls}<\infty$. For the second term in \eqref{eq:nonlocalPI}, the triangle inequality yields
    \begin{align*}
        &\int_{\widehat{\Gamma}}\int_{\Omega}\vert(u(y)\gamma(y,x)-u(x)\gamma(x,y))v(y)\vert\,\mathrm{d}x\,\mathrm{d}y\\
        =&\int_{\widehat{\Gamma}}\int_{\Omega}\vert(u(y)\gamma(y,x)-u(x)\gamma(x,y))(v(x)-v(x)-v(y))\vert\,\mathrm{d}x\,\mathrm{d}y\\
        \leqslant &\int_{\Rd}\int_{\Omega}\vert(u(y)\gamma(y,x)-u(x)\gamma(x,y))v(x)\vert\,\mathrm{d}x\,\mathrm{d}y\\
        &+\int_{\Rd}\int_{\Omega}\vert(u(y)\gamma(y,x)-u(x)\gamma(x,y))(v(x)-v(y))\vert\,\mathrm{d}x\,\mathrm{d}y.
    \end{align*}
    So it remains to show, that the integral
    \begin{equation*}
        \int_{\Omega}\int_{\Rd}\vert(u(x)\gamma(x,y)-u(y)\gamma(y,x))(v(x)-v(y))\vert\,\mathrm{d}y\,\mathrm{d}x
    \end{equation*}
    is finite. By using the triangle inequality again we obtain
    \begin{align*}
        &\int_{\Omega}\int_{\Rd}\vert(u(x)\gamma(x,y)-u(y)\gamma(y,x))(v(x)-v(y))\vert\,\mathrm{d}y\,\mathrm{d}x\\
        =&\int_{\Omega}\int_{\Rd}\vert(u(x)(\gamma(x,y)-\gamma(y,x))+(u(x)-u(y))\gamma(y,x))(v(x)-v(y))\vert\,\mathrm{d}y\,\mathrm{d}x\\
        \leqslant&\int_{\Omega}\int_{\Rd}\vert u(x)(\gamma(x,y)-\gamma(y,x))(v(x)-v(y))\vert\,\mathrm{d}y\,\mathrm{d}x\\
        &+\int_{\Omega}\int_{\Rd}\vert(u(x)-u(y))\gamma(y,x)(v(x)-v(y))\vert\,\mathrm{d}y\,\mathrm{d}x.
    \end{align*}
    While for the second term Hölder's inequality yields
    \begin{equation*}
        \int_{\Omega}\int_{\Rd}\vert(u(x)-u(y))\gamma(y,x)(v(x)-v(y))\vert\,\mathrm{d}y\,\mathrm{d}x\leqslant \|u\|_{\mathrm{V}(\Omega;\widetilde{\gamma})}\|v\|_{\mathrm{V}(\Omega;\widetilde{\gamma})}<\infty
    \end{equation*}
    we estimate the first term by
    \begin{align*}
        &\int_{\Omega}\int_{\Rd}\vert u(x)(\gamma(x,y)-\gamma(y,x))(v(x)-v(y))\vert\,\mathrm{d}y\,\mathrm{d}x\\
        =&\int_{\Omega}\int_{\Rd} \vert u(x)\frac{(\gamma(x,y)-\gamma(y,x))}{\sqrt{k(y,x)}}\sqrt{k(y,x)}(v(x)-v(y))\vert\,\mathrm{d}y\,\mathrm{d}x\\
        \leqslant & \left\|\int_{\Rd}k(y,\cdot)\,\mathrm{d}y\right\|_{\mathrm{L}^{\infty}(\Omega)}^{\frac{1}{2}}\|u\|_{\mathrm{V}(\Omega;\widetilde{\gamma})}\|v\|_{\mathrm{V}(\Omega;\widetilde{\gamma})}\\
        <&\infty.
    \end{align*}
    Therefore, all integrals are, in the Lebesgue-sense, well defined and finite. And the linearity of an integral yields 
    \begin{align*}
        &\int_{\Omega}\int_{\Rd}(u(x)\gamma(x,y)-u(y)\gamma(y,x))v(x)\,\mathrm{d}y\,\mathrm{d}x\\
        =&\int_{\Omega}\int_{\Omega}(u(x)\gamma(x,y)-u(y)\gamma(y,x))v(x)\,\mathrm{d}y\,\mathrm{d}x\\
        &+\int_{\Omega}\int_{\Rd\setminus \Omega}(u(x)\gamma(x,y)-u(y)\gamma(y,x))v(x)\,\mathrm{d}y\,\mathrm{d}x.
    \end{align*}
    Note that $\gamma(x,y)=\gamma(y,x)=0$ holds for a.e.\;$(x,y)\in\Omega\times\Rd\setminus (\Omega\cup\widehat{\Gamma})$ and thereby
    \begin{align*}
        &\int_{\Omega}\int_{\Rd\setminus \Omega}(u(x)\gamma(x,y)-u(y)\gamma(y,x))v(x)\,\mathrm{d}y\,\mathrm{d}x\\
        =&\int_{\Omega}\int_{\widehat{\Gamma}}(u(x)\gamma(x,y)-u(y)\gamma(y,x))v(x)\,\mathrm{d}y\,\mathrm{d}x.
    \end{align*}
    By Fubini's Theorem we conclude
    \begin{align*}
        &\int_{\Omega}\int_{\Omega}(u(x)\gamma(x,y)-u(y)\gamma(y,x))v(x)\,\mathrm{d}y\,\mathrm{d}x\\
        =&\frac{1}{2}\int_{\Omega}\int_{\Omega}(u(x)\gamma(x,y)-u(y)\gamma(y,x))v(x)\,\mathrm{d}y\,\mathrm{d}x\\
        &-\frac{1}{2}\int_{\Omega}\int_{\Omega}(u(y)\gamma(y,x)-u(x)\gamma(x,y))v(x)\,\mathrm{d}y\,\mathrm{d}x\\
        =&\frac{1}{2}\int_{\Omega}\int_{\Omega}(u(x)\gamma(x,y)-u(y)\gamma(y,x))(v(x)-v(y))\,\mathrm{d}y\,\mathrm{d}x
    \end{align*}
    and 
    \begin{align*}
        &\int_{\Omega}\int_{\Rd\setminus \Omega}(u(x)\gamma(x,y)-u(y)\gamma(y,x))v(x)\,\mathrm{d}y\,\mathrm{d}x\\
        =&\int_{\Omega}\int_{\widehat{\Gamma}}(u(x)\gamma(x,y)-u(y)\gamma(y,x))(v(x)-v(y))\,\mathrm{d}y\,\mathrm{d}x\\
        &-\int_{\widehat{\Gamma}}\int_{\Omega}(u(y)\gamma(y,x)-u(x)\gamma(x,y))\mathrm{d}x\,v(y)\,\mathrm{d}y.
    \end{align*}
    Due to $\chi_{\Rd}\in\V$, the special case \eqref{eq:nonlocalPI-specialCase} follows.
\end{proof}


\section{The nonlocal Neumann problem}\label{ssub:sym}

For the remainder of this section we assume, if not stated otherwise, $\gamma\in\kernel$ to be symmetric.

Let $u\in\V$ be a solution of problem \eqref{NP} which satisfies
\begin{align*}
    \int_{\Omega}\left(\int_{\Rd}\vert u(x)-u(y)\vert\gamma(y,x)\,\mathrm{d}y\right)^{2}\,\mathrm{d}x<\infty.
\end{align*}
Then by the nonlocal integration by parts formula (see \Cref{thm:gauss}) we get, for all $v\in \V$,
\begin{align*}
	\int_{\Omega}{\color{blue}f(x)}v(x)\,\mathrm{d}x&=\int_{\Omega}{\color{blue}\La u(x)}v(x)\,\mathrm{d}x\\
	&=\mathfrak{B}(u,v)-\int_{\Gamma}{\color{red}\Neu u(y)}\,v(y)\,\mathrm{d}y=\mathfrak{B}(u,v)-\int_{\Gamma}{\color{red}g(y)}\,v(y)\,\mathrm{d}y.
\end{align*}
Hence we define our weak solution accordingly.
\begin{definition}\label{de:weak_s}
    Let $\Omega\subset \Rd$ be nonempty and open and $\gamma\in\kernel$ be symmetric. For given measurable functions $f\colon \Omega\to\mathbb{R}$ and $g\colon \Gamma\to\mathbb {R}$, we say that the function $u\in \V$ is a \emph{weak solution} of the Neumann problem \eqref{NP}, if
    \begin{align*}
    	\int_{\Omega}f(x)v(x)\,\mathrm{d}x+\int_{\Gamma}g(y)\,v(y)\,\mathrm{d}y=\mathfrak{B}(u,v)\quad \text{holds for all }v \in \V.
    \end{align*}
    We call the Neumann problem \emph{homogeneous} if $g=0$.
\end{definition}

First, we highlight that the symmetry assumption on $\gamma \in\kernel$ can be relaxed.
\begin{remark}
    Consider the nonlocal problem
    \begin{equation}\tag{*}\label{relaxed}
        \begin{cases}
            \mathcal{L}_{\eta}u(x)=\displaystyle{\int_{\Rd} (u(x)-u(y))\eta(y,x)\,\mathrm{d}y}&=f(x)\quad\text{for }x\in\Omega,\\
            \Neu_{\eta}u(y)=\displaystyle{\int_{\Omega} (u(y)-u(x))\eta(y,x)\,\mathrm{d}x}&=g(y)\quad\text{for } y\in\Gamma(\Omega,\eta),
        \end{cases}
    \end{equation}
    where the measurable function $\eta\colon \Rd\times\Omega\to[0,\infty)$ is only assumed to be symmetric a.e.\;on $\Omega\times\Omega$. By setting 
    \begin{equation*}
        \gamma\colon \Rd\times\Rd\to[0,\infty),\;\gamma(y,x)=\begin{cases}
            \eta(y,x)\quad\text{for }(y,x)\in\Omega\times\Omega,\\
            \eta(y,x)\quad\text{for }(y,x)\in\Rd\setminus \Omega\times\Omega,\\
            \eta(x,y)\quad\text{for }(y,x)\in\Omega\times\Rd\setminus\Omega,
        \end{cases}
    \end{equation*}
    we obtain a well-defined measurable and nonnegative function, i.e., $\gamma\in\kernel$ which is symmetric a.e.\;on $\Rd\times\Rd$ with $\Gamma(\Omega,\gamma)=\Gamma(\Omega,\eta)$ and
    \begin{equation*}
       \mathfrak{B}_{\eta}(u,v)=\mathfrak{B}_{\gamma}(u,v)\;\text{ for all } u,v\in\V=\mathrm{V}(\Omega;\eta).
    \end{equation*}
    Moreover, 
    $u\colon\Rd\to\mathbb{R}$ is a strong solution of problem \eqref{relaxed}  if and only if $u\colon\Rd\to\mathbb{R}$ is a strong solution of problem $\eqref{NP}$. Therefore, we define the weak solution of problem \eqref{relaxed} accordingly:
    
    {\it
    The function $u\in \mathrm{V}(\Omega;\eta)$ is a weak solution of the problem \eqref{relaxed} if
    \begin{align*}
        \int_{\Omega}f(x)v(x)\,\mathrm{d}x-\int_{\Gamma(\Omega,\eta)}g(y)\,v(y)\,\mathrm{d}y=\mathfrak{B}_{\eta}(u,v)\quad \text{holds for all }v \in \mathcal{V}(\Omega;\eta).
    \end{align*}}
\end{remark}

In the next remark we study under which assumptions the weak and strong solutions of the homogeneous Neumann problem are equivalent.
\begin{remark}
    Let $f\in\Ls$ and let $u\in\V$ be a weak solution of the homogeneous Neumann problem \eqref{NP} satisfying 
    \begin{align*}
        \int_{\Omega}\left(\int_{\Rd}\vert u(x)-u(y)\vert\gamma(y,x)\,\mathrm{d}y\right)^{2}\,\mathrm{d}x<\infty.
    \end{align*}
    By  the definition of the weak solution (\Cref{de:weak_s}) and the nonlocal integration by parts formula (\Cref{thm:gauss}) we find
    \begin{align*}
        \int_{\Omega}\mathcal{L}u(x)v(x)\,\mathrm{d}x+\int_{\Gamma}\Neu u(y)v(y)\,\mathrm{d}y=\int_{\Omega}f(x)v(x)\,\mathrm{d}x\text{ for all }v\in\V.
    \end{align*}
    If every infinitely differentiable function with compact support  is an element of $\V$ (i.e. $\mathrm{C}^{\infty}_{0}(\Rd)\subset\V$), then also
    \begin{align*}
        \int_{\Omega}\mathcal{L}u(x)v(x)\,\mathrm{d}x+\int_{\Gamma}\Neu u(y)v(y)\,\mathrm{d}y=\int_{\Omega}f(x)v(x)\,\mathrm{d}x\text{ for all }v\in\mathrm{C}^{\infty}_{0}(\Rd),
    \end{align*}
    so that by \cite[Corollary 4.24.]{brezis2010functional} we have $\mathcal{L}u=f$ a.e.\;on $\Omega$ and $\Neu u=0$ a.e.\;on $\Gamma$. 
    
    If for a.e.\;$x\in\Omega\cup\Gamma$ there is an $R>0$ such that for all $0<r<R$ we have $\chi_{\mathrm{B}_{r}(x)}\in\V$, then by \cite[Theorem 7.7]{rudin1987real} we get
    \begin{equation*}
        \La u(x_{0})=\lim\limits_{r\to 0}\int_{\Omega}\La u(x)\frac{\chi_{\mathrm{B}_{r}(x_{0})}(x)}{\lambda(\mathrm{B}_{r}(0))}\,\mathrm{d}x=\lim\limits_{r\to 0}\int_{\Omega}f(x)\frac{\chi_{\mathrm{B}_{r}(x_{0})}(x)}{\lambda(\mathrm{B}_{r}(0))}\,\mathrm{d}x=f(x_{0})
    \end{equation*}
    for a.e.\;$x_{0}\in\Omega$ and 
    \begin{equation*}
        \Neu u(y_{0})=\lim\limits_{r\to 0}\int_{\Gamma}\Neu u(y)\chi_{\mathrm{B}_{r}(y_{0})}(y)\,\mathrm{d}y=0
    \end{equation*}
    for a.e.\;$y_{0}\in\Gamma$.
\end{remark}

Before we present an existence result for the weak solution of problem \eqref{NP}, we show that \Cref{corollary:hilb} brings in the existence of a weak solution for the following regularized variant of \eqref{NP} with homogeneous Neumann constraints.
\begin{remark}\label{rem:trivial_neum_case}
    Let us consider the problem
    \begin{equation*}
        \left\{
            \begin{array}{llr}
                \mathcal{L}u(x)+\kappa(x) u(x)&=f(x)\quad&\text{for }x\in\Omega,\\
                \Neu u(y)&=0\quad&\text{for } y\in\Gamma,
            \end{array}
        \right.
    \end{equation*}
    for $\Omega\subset \Rd$ bounded and open, $f\in\Ls $, a measurable function $\kappa\colon\Omega\to[\alpha,\beta]$ with $0<\alpha<\beta<\infty$ and a symmetric function $\gamma\in\kernel$. Because  
    \begin{equation*}
        \min\{\frac{1}{2},\alpha\}\|u\|_{\V}^{2}\leqslant\mathfrak{B}(u,u)+\int_{\Omega}\kappa(x)u^{2}(x)\,\mathrm{d}x\leqslant \max\{1,\beta\} \|u\|_{\V}^{2}
    \end{equation*}
    holds for all $u\in\V$, the existence of a unique weak solution of this problem, i.e., the function $u\in\V$ solving
    \begin{equation*}
        \int_{\Omega}f(x)v(x)\,\mathrm{d}x=\mathfrak{B}(u,v)+\int_{\Omega}\kappa(x)u(x)v(x)\,\mathrm{d}x\quad \text{for all }v \in \V,
    \end{equation*}
    is given by \Cref{corollary:hilb}. The case $\kappa=0$ however requires special treatment, since neither the existence nor the uniqueness is guaranteed. Note that all constant functions belong to $\V$ and that we have $\mathfrak{B}(u,v)=\mathfrak{B}(u+c,v)$ for all $u,v\in\V$ and constants $c\in\mathbb{R}$. Therefore, the weak solution cannot be unique in $\V$. And the existence of a weak solution to our Neumann problem already implies
    \begin{align*}
        \int_{\Omega}f(x)\,\mathrm{d}x+\int_{\Gamma}g(y)\,\mathrm{d}y=0.
    \end{align*}
    In other words a compatibility condition is necessary for the existence.
\end{remark}

Using a similar approach as is used for the local Neumann problem that can be found in Brenner and Scott \cite[Section 2.5.]{brenner2007mathematical}), we now present an existence and uniqueness result for the weak solution of the homogeneous Neumann problem \eqref{NP} (i.e., $\kappa=0$ in view of \Cref{rem:trivial_neum_case}).
\begin{theorem}	\label{thm:exriesz}
	Let $\Omega\subset\Rd$ be bounded, nonempty and open, $f\in\mathrm{L}^{2}(\Omega)$ and $\gamma\in\kernel$ be symmetric. Then the homogeneous Neumann problem \eqref{NP}, i.e.,
	\begin{equation*}
    \left\{
    \begin{array}{lll}
         \mathcal{L}u&=f\quad&\text{on }\Omega,\\
         \Neu u&=0\quad&\text{on }\Gamma,
    \end{array}
    \right.
    \end{equation*}
    has a weak solution if 
    \begin{align}
    	\int_{\Omega}f(x)\,\mathrm{d}x=0\tag{Compatibility condition}
    \end{align}
    and if there is a constant $C>0$, such that for all $v\in \V$ we have
    \begin{align}
    	\int_{\Omega}\int_{\Omega}(v(x)-v(y))^{2}\,\mathrm{d}y\,\mathrm{d}x \leqslant C\int_{\Omega}\int_{\Rd}(v(x)-v(y))^{2}\gamma(y,x)\,\mathrm{d}y\,\mathrm{d}x.\tag{\text{Poincar\'{e} inequality}}
    \end{align}
    Furthermore the weak solution is unique up to an additive constant.
\end{theorem}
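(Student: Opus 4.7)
The plan is to mimic the classical treatment of the local Neumann problem from Brenner--Scott: quotient out the constants from $\V$ and apply the Riesz representation theorem on the resulting Hilbert space, using $\mathfrak{B}$ itself as the inner product. The Poincar\'e inequality is precisely what makes $\mathfrak{B}$ coercive on the quotient, and the compatibility condition is precisely what allows a solution obtained on the quotient to be lifted to a weak solution against every test function in $\V$.

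Concretely, I set
\begin{equation*}
W := \Bigl\{v\in\V : \textstyle\int_\Omega v(x)\,\mathrm{d}x = 0\Bigr\},
\end{equation*}
a closed subspace of $\V$ of codimension one, complementary to the one-dimensional space of constants (which belong to $\V$ because $\Omega$ is bounded). Since $\mathfrak{B}(u,v)=0$ whenever $v$ is constant, $\mathfrak{B}$ restricts to a well-defined symmetric bilinear form on $W$. I verify that it is an inner product there by invoking Poincar\'e: expanding
\begin{equation*}
\int_\Omega\int_\Omega (v(x)-v(y))^2\,\mathrm{d}y\,\mathrm{d}x = 2|\Omega|\,\|v-\bar v\|_{\Ls}^2,
\end{equation*}
with $\bar v := |\Omega|^{-1}\int_\Omega v$, and noting that for symmetric $\gamma$ the right-hand side of the Poincar\'e inequality is at most $2\,\mathfrak{B}(v,v)$ (the integrand vanishes a.e.\ on $\Omega\times(\Rd\setminus(\Omega\cup\Gamma))$ and the factors of $\tfrac12$ in $\mathfrak{B}$ account for the remainder), one obtains a constant $C'>0$ with
\begin{equation*}
\|v-\bar v\|_{\Ls}^2 \leq C'\,\mathfrak{B}(v,v)\quad\text{for all }v\in\V.
\end{equation*}
For $v\in W$ this reads $\|v\|_{\Ls}^2\leq C'\,\mathfrak{B}(v,v)$, proving both positive definiteness and equivalence with the norm inherited from $\V$, so $(W,\mathfrak{B})$ is a Hilbert space.

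Next I consider the linear functional $F\colon W\to\mathbb{R}$, $F(v):=\int_\Omega f(x)v(x)\,\mathrm{d}x$. By Cauchy--Schwarz and the bound just derived, $|F(v)|\leq \|f\|_{\Ls}\sqrt{C'}\,\mathfrak{B}(v,v)^{1/2}$, so $F$ is continuous on $(W,\mathfrak{B})$. The Riesz representation theorem produces a unique $u\in W$ with $\mathfrak{B}(u,v)=F(v)$ for every $v\in W$. To upgrade this to an identity valid on all of $\V$, I decompose an arbitrary $v\in\V$ as $v=(v-\bar v\mathbf{1})+\bar v\mathbf{1}$, observe that $v-\bar v\mathbf{1}\in W$ and $\mathfrak{B}(u,\bar v\mathbf{1})=0$, and use the compatibility condition $\int_\Omega f=0$ to conclude $\int_\Omega f\cdot\bar v\,\mathrm{d}x=0$. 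Putting these together,
\begin{equation*}
\mathfrak{B}(u,v)=\mathfrak{B}(u,v-\bar v\mathbf{1})=\int_\Omega f(x)(v(x)-\bar v)\,\mathrm{d}x=\int_\Omega f(x)v(x)\,\mathrm{d}x,
\end{equation*}
so $u$ is a weak solution in the sense of \Cref{de:weak_s}.

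For uniqueness up to an additive constant, if $u_1,u_2\in\V$ are both weak solutions then $w:=u_1-u_2$ satisfies $\mathfrak{B}(w,v)=0$ for every $v\in\V$; testing with $v=w$ forces $\mathfrak{B}(w,w)=0$, hence $(w(x)-w(y))^2\gamma(y,x)=0$ for a.e.\ $(x,y)\in\Omega\times(\Omega\cup\Gamma)$. The Poincar\'e estimate then gives $w=\bar w$ a.e.\ on $\Omega$, and for a.e.\ $y\in\Gamma$ the defining property $\int_\Omega\gamma(y,x)\,\mathrm{d}x>0$ forces $w(y)=\bar w$ on a positive-measure subset of $\Omega$, hence $w(y)=\bar w$. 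Thus $w$ is a.e.\ constant on $\Omega\cup\Gamma$, i.e., the zero element of $\V$ once an additive constant is allowed. I expect no genuine obstacle in this argument beyond careful bookkeeping; the one delicate point is the discrepancy between the Poincar\'e right-hand side, which integrates over $\Omega\times\Rd$, and $\mathfrak{B}$, which integrates over $\Omega\times(\Omega\cup\Gamma)$, but symmetry of $\gamma$ makes this a harmless factor of two.
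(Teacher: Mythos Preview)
Your proposal is correct and follows essentially the same route as the paper: define the mean-zero subspace, use the Poincar\'e inequality to show $\mathfrak{B}$ is an equivalent inner product there, apply Riesz, and lift via the compatibility condition. The only cosmetic difference is in the uniqueness step, where the paper subtracts means and invokes uniqueness in $\widehat V(\Omega;\gamma)$ directly, whereas you argue that $\mathfrak{B}(w,w)=0$ forces $w$ constant on $\Omega\cup\Gamma$; both are valid and amount to the same thing.
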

\begin{proof}
	First we remark that for all $u\in \V$ we have
	\begin{align*}
	    2\left(\lambda(\Omega)\int_{\Omega}u^{2}(x)\,\mathrm{d}x-\left(\int_{\Omega}u(x)\,\mathrm{d}x\right)^{2}\right)&=\int_{\Omega}\int_{\Omega}(u(x)-u(y))^{2}\,\mathrm{d}y\,\mathrm{d}x\\
    	&\leqslant  C\int_{\Omega}\int_{\Rd}(u(x)-u(y))^{2}\gamma(y,x)\,\mathrm{d}y\,\mathrm{d}x
    \end{align*}
    and we introduce the space 
    \begin{align*}
    	\widehat{V}(\Omega;\gamma):=\left\{v\in \V \colon\int_{\Omega}v(x)\,\mathrm{d}x=0\right\}.
    \end{align*}
    Let us define the average
    \begin{align*}
    	u_{\Omega}:=\frac{1}{\lambda(\Omega)}\int_{\Omega}u\,\mathrm{d}x
    \end{align*}
    for $u\in\V$. Since $u-u_{\Omega}$ is for all $u\in \V$ an element of $\widehat{V}(\Omega;\gamma)$ we have
    \begin{align*}
	    \int_{\Omega}(u(x)-u_{\Omega})^{2}\,\mathrm{d}x\leqslant \frac{C}{2\lambda(\Omega)}\int_{\Omega}\int_{\Rd}(u(x)-u(y))^{2}\gamma(y,x)\,\mathrm{d}y\,\mathrm{d}x\leqslant\frac{C}{\lambda(\Omega)} \, \mathfrak{B}(u,u).
    \end{align*}
    For all $u\in \widehat{V}(\Omega;\gamma)$ we have due to the Poincar\'e inequality
    \begin{align}\label{eqnorm}
	    \min\left\{\dfrac{1}{4},\dfrac{\lambda(\Omega)}{2\,C}\right\} \|u\|_{\V}^{2}\leqslant &\dfrac{\lambda(\Omega)}{2\,C}\int_{\Omega}u^{2}(x)\,\mathrm{d}x+ \dfrac{1}{4}\int_{\Omega}\int_{\Rd}(u(x)-u(y))^{2}\gamma(y,x)\,\mathrm{d}y\,\mathrm{d}x\\
    	\leqslant &\mathfrak{B}(u,u)\\
	    \leqslant &\left(\int_{\Omega}u^{2}(x)\,\mathrm{d}x+ \int_{\Omega}\int_{\Rd}(u(x)-u(y))^{2}\gamma(y,x)\,\mathrm{d}y\,\mathrm{d}x\right)\\
	    =& \|u\|_{\V}^{2}.
    \end{align}
   We obtain that $\mathfrak{B}(w,w)=0$ for $w \in \widehat{V}(\Omega;\gamma)$ implies $\|w\|_{\V}=0$, i.e., $w=0$ a.e.\;on $\Omega\cup\Gamma$. Thereby $\mathfrak{B}$ is an inner product on the space $ \widehat{V}(\Omega;\gamma)$. Because $\Omega$ is bounded and because \eqref{eqnorm}, we see that $\widehat{V}(\Omega;\gamma)$ is a Hilbert space with respect to the inner product $\mathfrak{B}$. Let us consider the linear functional
    \begin{align*}
    	\Lambda (v)=\int_{\Omega}f(x)v(x)\,\mathrm{d}x\quad \text{on }\widehat{V}(\Omega;\gamma),
    \end{align*}
    which is bounded by the Cauchy–Schwarz inequality $\Lambda$. Therefore, by the Riesz representation theorem there is a unique $u\in \widehat{V}(\Omega;\gamma)$ such that 
    \begin{align*}
    	\Lambda (v)=\mathfrak{B}(u,v)\quad\text{for all }v\in\widehat{V}(\Omega;\gamma).
    \end{align*}
    Because $\int_{\Omega}f(x)\,\mathrm{d}x=0$ holds we get
    \begin{align*}
    	\int_{\Omega}f(x)v(x)\,\mathrm{d}x=&\int_{\Omega}f(x)(v(x)-v_{\Omega})\,\mathrm{d}x\\
    	=&\mathfrak{B}(u,v-v_{\Omega})=\mathfrak{B}(u,v)\text{ for all } v\in \V.
    \end{align*}
    It remains to show that weak solutions in $\V$ are unique up to a constant. For that reason let both $u,v\in\V$ be weak solutions for our nonlocal Neumann problem. This means $u-u_{\Omega}$ and $v-v_{\Omega}$ are weak solutions as well and because the weak solution is unique in $\widehat{V}(\Omega;\gamma)$ we obtain $u=v-v_{\Omega}+u_{\Omega}$.
\end{proof}

Now we study the nonhomogeneous nonlocal Neumann problem. For the existence of a weak solution, the following linear operator
\begin{align*}
    \V\to\mathbb{R},\quad v\mapsto \int_{\Gamma}g(y)v(y)\,\mathrm{d}y
\end{align*}
must exist and be well-defined.
\begin{definition}
    Let $\Omega\subset\Rd$ be bounded, nonempty and open and let $\gamma\in\kernel$.
    \begin{enumerate}[label=(\roman*)]
        \item We say that a measurable function $g\colon \Gamma\to\mathbb{R}$ satisfies the \emph{functional condition (on $\V$)} if 
         \begin{align*}
            v\mapsto\int_{\Gamma}g(y)v(y)\,\mathrm{d}y,
        \end{align*}
        is a linear functional on $\V$. If the functional is continuous we say that $g\colon \Gamma\to\mathbb{R}$ satisfies the \emph{continuous functional condition}.
        \item We say that a measurable function $g\colon \Rd\to\mathbb{R}$ satisfies the \emph{(continuous) functional condition} if $g\vert_{\Gamma}$ satisfies the (continuous) functional condition. 
    \end{enumerate}
\end{definition}

Imposing the continuous functional condition on the Neumann data, $g$ is sufficient for the existence of a weak solution to the nonhomogeneous Neumann problem as shown in the next theorem.

%
%
\begin{theorem}	\label{thm:exnonhom}
	Let $\Omega\subset\Rd$ be bounded, nonempty and open, $f\in\mathrm{L}^{2}(\Omega)$, $\gamma\in\kernel$ be symmetric and $g\colon \Gamma\to \mathbb{R}$ satisfy the continuous functional condition. Then the nonhomogeneous Neumann problem \eqref{NP}, i.e.,
    \begin{equation*}
        \left\{
        \begin{array}{ll}
            \mathcal{L}u&=f\quad\text{on }\Omega,\\
            \Neu u&=g\quad\text{on } \Gamma,
        \end{array}
        \right.
    \end{equation*}
    has a weak solution if 
    \begin{align}
    	\int_{\Omega}f(x)\,\mathrm{d}x+\int_{\Gamma}g(y)\,\mathrm{d}y=0\tag{Compatibility condition}
    \end{align}
    and if there is a constant $C>0$, such that for all $v\in \V$ we have
    \begin{align}
    	\int_{\Omega}\int_{\Omega}(v(x)-v(y))^{2}\,\mathrm{d}y\,\mathrm{d}x \leqslant C\int_{\Omega}\int_{\Rd}(v(x)-v(y))^{2}\gamma(y,x)\,\mathrm{d}y\,\mathrm{d}x.\tag{\text{Poincar\'e  inequality}}
    \end{align}
    Furthermore the weak solution is unique up to an additive constant.
\end{theorem}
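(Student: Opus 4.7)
The plan is to imitate the proof of \Cref{thm:exriesz}, now with the additional boundary term $\int_{\Gamma}g(y)v(y)\,\mathrm{d}y$ absorbed into the linear functional on the mean-zero subspace
\begin{equation*}
    \widehat{V}(\Omega;\gamma):=\Big\{v\in \V \colon \int_{\Omega}v(x)\,\mathrm{d}x=0\Big\}.
\end{equation*}
As in \Cref{thm:exriesz}, the Poincar\'e inequality combined with the estimate that relates $\mathfrak{B}(u,u)$ to $\|u\|_{\V}^{2}$ for $u\in \widehat{V}(\Omega;\gamma)$ shows that $\mathfrak{B}$ is an inner product on $\widehat{V}(\Omega;\gamma)$, inducing a norm equivalent to $\|\cdot\|_{\V}$, so that $(\widehat{V}(\Omega;\gamma),\mathfrak{B})$ is a Hilbert space.

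Next, I would consider the linear functional
\begin{equation*}
    \Lambda(v):=\int_{\Omega}f(x)v(x)\,\mathrm{d}x+\int_{\Gamma}g(y)v(y)\,\mathrm{d}y\quad\text{on }\widehat{V}(\Omega;\gamma).
\end{equation*}
The first term is bounded by $\|f\|_{\Ls}\|v\|_{\Ls}\leqslant \|f\|_{\Ls}\|v\|_{\V}$ by Cauchy--Schwarz; the second term is bounded by the continuous functional condition on $g$. Hence $\Lambda$ is continuous on $\widehat{V}(\Omega;\gamma)$ with respect to $\|\cdot\|_{\V}$, and by equivalence of norms, also with respect to $\mathfrak{B}(\cdot,\cdot)^{1/2}$. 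By the Riesz representation theorem there is a unique $u\in\widehat{V}(\Omega;\gamma)$ such that
\begin{equation*}
    \Lambda(v)=\mathfrak{B}(u,v)\quad\text{for all }v\in \widehat{V}(\Omega;\gamma).
\end{equation*}

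The key step \emph{using the compatibility condition} is the extension of this identity from $\widehat{V}(\Omega;\gamma)$ to all of $\V$. For arbitrary $v\in\V$, write $v=(v-v_{\Omega})+v_{\Omega}$, where $v-v_{\Omega}\in \widehat{V}(\Omega;\gamma)$ and $v_\Omega$ is a constant. Since constants are annihilated by $\mathfrak{B}$, we have $\mathfrak{B}(u,v)=\mathfrak{B}(u,v-v_{\Omega})$. On the other hand, the compatibility condition yields
\begin{equation*}
    \int_{\Omega}f(x)v_{\Omega}\,\mathrm{d}x+\int_{\Gamma}g(y)v_{\Omega}\,\mathrm{d}y=v_{\Omega}\Big(\int_{\Omega}f(x)\,\mathrm{d}x+\int_{\Gamma}g(y)\,\mathrm{d}y\Big)=0,
\end{equation*}
so $\Lambda(v)=\Lambda(v-v_{\Omega})=\mathfrak{B}(u,v-v_{\Omega})=\mathfrak{B}(u,v)$ for every $v\in\V$, proving that $u$ is a weak solution. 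For uniqueness up to an additive constant, if $u_{1},u_{2}\in\V$ are weak solutions, then $u_{1}-(u_{1})_{\Omega}$ and $u_{2}-(u_{2})_{\Omega}$ are both weak solutions lying in $\widehat{V}(\Omega;\gamma)$, and Riesz uniqueness in that subspace gives $u_{1}-u_{2}=(u_{1})_{\Omega}-(u_{2})_{\Omega}$, a constant. The only nontrivial obstacle here is recognizing that the continuous functional condition is exactly what is needed to make $\Lambda$ admissible in Riesz; everything else is bookkeeping of constants enabled by the compatibility condition.
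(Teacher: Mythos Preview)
Your proposal is correct and follows essentially the same approach as the paper: restrict to the mean-zero subspace $\widehat{V}(\Omega;\gamma)$ where the Poincar\'e inequality makes $\mathfrak{B}$ an equivalent inner product, apply Riesz to the bounded functional $\Lambda$ (boundedness of the boundary term being exactly the continuous functional condition), and then use the compatibility condition together with $\mathfrak{B}(u,\text{const})=0$ to extend the identity to all of $\V$; uniqueness up to constants follows from Riesz uniqueness on $\widehat{V}(\Omega;\gamma)$ applied to the mean-centered versions of any two weak solutions. Your write-up is in fact slightly more explicit than the paper's, which simply says ``similarly to the proof of \Cref{thm:exriesz}'' for the extension step.
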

\begin{proof}
	As in the proof of \Cref{thm:exriesz} we see that 
    \begin{align*}
    	\widehat{V}(\Omega;\gamma):=\left\{v\in \V \colon\int_{\Omega}v(x)\,\mathrm{d}x=0\right\}
    \end{align*}
     is a Hilbert space with respect to the inner product $\mathfrak{B}$.  Next, we consider the linear functional
    \begin{equation*}
        \Lambda (v)=\int_{\Omega}f(x)v(x)\,\mathrm{d}x+\int_{\Gamma}g(y)v(y)\,\mathrm{d}y\quad \text{for}~v\in\widehat{V}(\Omega;\gamma).
    \end{equation*}
    This functional is bounded, so that by the Riesz representation theorem there is a unique $u\in \widehat{V}(\Omega;\gamma)$ satisfying
    \begin{equation*}
        \Lambda (v)=\mathfrak{B}(u,v)\quad\text{for all }v\in\widehat{V}(\Omega;\gamma).
    \end{equation*}
    Similarly to the proof of \Cref{thm:exriesz}, we find that the compatibility condition implies
     \begin{equation*}
        \Lambda (v)=\mathfrak{B}(u,v)\quad\text{for all }v\in\V.
    \end{equation*}
    Finally, for two weak solutions $u_1,u_2\in\V$ we get that there mean--centered versions are equal, i.e.,
    \begin{align*}
        u_1-\frac{1}{\lambda(\Omega)}\int_{\Omega}u_1(x)\,\mathrm{d}x=u_2-\frac{1}{\lambda(\Omega)}\int_{\Omega}u_2(x)\,\mathrm{d}x.
    \end{align*}
     Thus, weak solutions are unique up to an additive constant.
\end{proof}
In \cref{sec:tr} we present some sufficient assumptions for the continuous functional condition.

An approach to study the local nonhomogeneous Neumann problem is to transform it into an equivalent homogeneous problem by using the linear dependence of the weak solution on the right hand side. 
\begin{remark}
    Let the assumptions of \Cref{thm:exnonhom} be satisfied and $u\in\V$ be a weak solution of the nonhomogeneous Neumann problem \eqref{NP}. Furthermore, let $\widetilde{g}\in\V$ be the weak solution of the Neumann problem
    \begin{equation*}
        \left\{
        \begin{array}{ll}
            \mathcal{L}u+\lambda u&=0\quad\text{on }\Omega,\\
            \Neu u&=g\quad\text{on } \Gamma,
        \end{array}
        \right.
    \end{equation*}
    for a given $\lambda>0$. More precisely, let $\widetilde{g}$ satisfy
    \begin{equation*}
        \mathfrak{B}(\widetilde{g},v)+\lambda\int_{\Omega}\widetilde{g}(x)v(x)\,\mathrm{d}x=\int_{\Gamma}g(y)v(y)\,\mathrm{d}y\quad \text{ for all }v\in\V.
    \end{equation*}
    The existence of $\widetilde{g}$ is given by the Lax--Milgram Theorem. Then we obtain that $\widetilde{u}=u-\widetilde{g}$ is a weak solution of 
    \begin{equation*}
        \left\{
        \begin{array}{lll}
            \mathcal{L}u&=f+\lambda\widetilde{g}\quad&\text{on }\Omega,\\
            \Neu u&=0\quad&\text{on } \Gamma.
        \end{array}
        \right.
    \end{equation*}
\end{remark}

We now show that depending on the nonlocal Neumann boundary condition, there is an explicit way to transform our nonhomogeneous Neumann problem into an equivalent homogeneous problem:
\begin{theorem}\label{thm:refl}
	Let $\Omega\subset\Rd$ be a bounded, nonempty and open, $\gamma\in\kernel$ be symmetric and $g\colon\Gamma\to\mathbb{R}$ such that for a.e.\;$y\in\Gamma$ with $\int_{\Omega}\gamma(y,x)\,\mathrm{d}x=\infty$ we have $g(y)=0$ and such that both integrals
    \begin{align*}
    	\int_{\Gamma}\frac{g^{2}(y)}{\int_{\Omega}\gamma(z,y)\,\mathrm{d}z}\,\mathrm{d}y\quad \text{ and }\quad \int_{\Omega}\Big(\int_{\Gamma}\frac{\vert g(y)\vert\gamma(y,x)}{\int_{\Omega}\gamma(z,y)\,\mathrm{d}z}\,\mathrm{d}y\Big)^{2}\,\mathrm{d}x
    \end{align*}
    are finite.
    \begin{enumerate}[label=(\roman*)]
        \item Then the continuous functional condition is satisfied by $g$ and we have $g\in\mathrm{L}^{1}(\Gamma)$.
        \item Let $\widetilde{g}$ be the zero extension of $\frac{g(\cdot)}{\int_{\Omega}\gamma(\cdot,z)\,\mathrm{d}z}$ outside $\Gamma$, i.e.,
        	\begin{equation*}
        	    \widetilde{g}\colon\Rd\to\mathbb{R},\;\widetilde{g}(y)=\begin{cases}
        	        \frac{g(\cdot)}{\int_{\Omega}\gamma(\cdot,z)\,\mathrm{d}z},\quad &y\in\Gamma,\\
        	        0,\quad&y\in\Rd\setminus\Gamma.
        	    \end{cases}
        	\end{equation*}
        	Then $\widetilde{g}\in\V$ is a strong solution of the nonhomogeneous Neumann problem
            \begin{align*}
                \left\{
                    \begin{array}{llr}
                	    \mathcal{L}u(x)&=-\int_{\Gamma}\frac{g(y)\gamma(y,x)}{\int_{\Omega}\gamma(y,z)\,\mathrm{d}z}\,\mathrm{d}y&\text{for }x\in\Omega,\\
                	    \Neu u(y)&=g(y)&\text{for }y\in\Gamma.
                	\end{array}
                \right.
            \end{align*}
        \item A function $u\in\V$ is a weak solution of the nonhomogeneous Neumann problem \eqref{NP}, i.e.,
    \begin{align*}
        \left\{
        \begin{array}{lll}
    	    \mathcal{L}u(x)&=f(x)\quad\text{for }x\in\Omega,&\\
    	    \Neu u(y)&=g(y)\quad\text{for }y\in\Gamma,
    	\end{array}
        \right.
    \end{align*}
    if and only if $u(\cdot)+\frac{g(\cdot)}{\int_{\Omega}\gamma(z,\cdot)\,\mathrm{d}z}\chi_{\Gamma}(\cdot)\in \V$ is a weak solution of the following homogeneous Neumann problem
    \begin{equation}\label{eq:transHomoProb}
        \begin{aligned}
            \left\{
            \begin{array}{llr}
    		\mathcal{L}\widetilde{u}(x)&=f(x)+\int_{\Gamma}\frac{g(y)\gamma(x,y)}{\int_{\Omega}\gamma(z,y)\,\mathrm{d}z}\,\mathrm{d}y\quad&\text{for }x\in\Omega,\\
    		\Neu\widetilde{u}(y)&=0\quad&\text{for }y\in\Gamma.
    		\end{array}
            \right.
    	\end{aligned}
    \end{equation}
        \end{enumerate}
    
\end{theorem}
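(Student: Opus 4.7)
The plan is to address the three parts in order, with (iii) following from (ii) by linearity.

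For \textbf{(i)}, the key is to express a boundary value $v(y)$, $y\in\Gamma$, as a weighted average over $\Omega$: since $\int_{\Omega}\gamma(z,y)\,\mathrm{d}z>0$ for a.e.\;$y\in\Gamma$, we may write $v(y)=\frac{1}{\int_{\Omega}\gamma(z,y)\,\mathrm{d}z}\int_{\Omega} v(y)\gamma(x,y)\,\mathrm{d}x$. Splitting $v(y)=v(x)+(v(y)-v(x))$ inside this integral and invoking Fubini yields
\[
\int_{\Gamma}g(y)v(y)\,\mathrm{d}y = \int_{\Omega} v(x)\,h(x)\,\mathrm{d}x + \int_{\Omega}\int_{\Gamma}\frac{g(y)(v(y)-v(x))\gamma(x,y)}{\int_{\Omega}\gamma(z,y)\,\mathrm{d}z}\,\mathrm{d}y\,\mathrm{d}x,
\]
where $h(x):=\int_{\Gamma}g(y)\gamma(x,y)/\int_{\Omega}\gamma(z,y)\,\mathrm{d}z\,\mathrm{d}y$. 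By the second hypothesis, $h\in\Ls$, so Cauchy--Schwarz bounds the first term by $\|v\|_{\Ls}\|h\|_{\Ls}$. For the second term, Cauchy--Schwarz in $\mathrm{L}^{2}(\Omega\times\Gamma;\gamma\,\mathrm{d}y\,\mathrm{d}x)$ combined with the symmetry of $\gamma$ (which gives $\int_{\Omega}\gamma(x,y)\,\mathrm{d}x=\int_{\Omega}\gamma(z,y)\,\mathrm{d}z$) yields the bound $\bigl(\int_{\Gamma} g^{2}(y)/\int_{\Omega}\gamma(z,y)\,\mathrm{d}z\,\mathrm{d}y\bigr)^{1/2}\|v\|_{\V}$, which is finite by the first hypothesis. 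The statement $g\in \mathrm{L}^{1}(\Gamma)$ follows by writing $|g|=(|g|/\int_{\Omega}\gamma)\cdot\int_{\Omega}\gamma$, applying Fubini, and using Cauchy--Schwarz with the second hypothesis together with $\lambda(\Omega)<\infty$.

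For \textbf{(ii)}, the function $\widetilde{g}$ vanishes outside $\Gamma$, so the seminorm collapses to
\[
\|\widetilde{g}\|_{\V}^{2} = \int_{\Omega}\int_{\Gamma}\widetilde{g}^{2}(y)\gamma(y,x)\,\mathrm{d}y\,\mathrm{d}x = \int_{\Gamma}\frac{g^{2}(y)}{\int_{\Omega}\gamma(z,y)\,\mathrm{d}z}\,\mathrm{d}y < \infty,
\]
again using symmetry of $\gamma$; hence $\widetilde{g}\in\V$. Direct pointwise computation then gives, for a.e.\;$x\in\Omega$,
\[
\La\widetilde{g}(x) = -\int_{\Gamma}\widetilde{g}(y)\gamma(y,x)\,\mathrm{d}y = -\int_{\Gamma}\frac{g(y)\gamma(y,x)}{\int_{\Omega}\gamma(y,z)\,\mathrm{d}z}\,\mathrm{d}y,
\]
which lies in $\Ls$ by the second hypothesis, and $\Neu\widetilde{g}(y)=\widetilde{g}(y)\int_{\Omega}\gamma(y,x)\,\mathrm{d}x=g(y)$ by symmetry. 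Thus $\widetilde{g}$ is a strong solution.

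For \textbf{(iii)}, combining (i), (ii) and the nonlocal integration by parts formula (\Cref{thm:gauss}) shows that $\widetilde{g}$ is a \emph{weak} solution of the problem in (ii), i.e.,
\[
\mathfrak{B}(\widetilde{g},v) = -\int_{\Omega}\int_{\Gamma}\frac{g(y)\gamma(x,y)}{\int_{\Omega}\gamma(z,y)\,\mathrm{d}z}\,\mathrm{d}y\,v(x)\,\mathrm{d}x + \int_{\Gamma}g(y)v(y)\,\mathrm{d}y\quad\text{for all }v\in\V.
\]
The equivalence is then a purely linear manipulation: subtracting (resp.\;adding) this identity from the weak formulation $\mathfrak{B}(u,v)=\int_{\Omega}f v+\int_{\Gamma}g v$ of the nonhomogeneous problem precisely cancels the $\int_{\Gamma}gv$ contribution and shifts $\La\widetilde{g}$ onto the right--hand side, yielding the weak formulation of the transformed homogeneous problem \eqref{eq:transHomoProb} for the adjusted function. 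Since this manipulation is invertible (both $u$ and $\widetilde{g}$ belong to $\V$), it gives the ``if and only if.''

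The main obstacle is part (i): elements of $\V$ carry no direct $L^{2}$-control on $\Gamma$, so bounding $\int_{\Gamma}g v$ by $\|v\|_{\V}$ requires the averaging--and--splitting device that trades boundary values for interior values plus a $\gamma$-weighted difference. Once this is established, (ii) is a direct pointwise calculation and (iii) is a routine linearity argument.
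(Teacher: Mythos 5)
Your proposal follows the paper's proof almost exactly: the $\gamma$-weighted averaging-and-splitting device for (i), the pointwise evaluation of $\La\widetilde{g}$ and $\Neu\widetilde{g}$ for (ii), and linearity for (iii); you even repair a small cosmetic slip in the paper's $\max$-bound by taking the square roots. One thing your (iii) leaves vague is the sign of ``the adjusted function'': if you actually write out $\mathfrak{B}(\widetilde{g},v)=-\int_\Omega\bigl(\int_\Gamma\tfrac{g(y)\gamma(y,x)}{\int_\Omega\gamma(z,y)\,\mathrm{d}z}\,\mathrm{d}y\bigr)v(x)\,\mathrm{d}x+\int_\Gamma g(y)v(y)\,\mathrm{d}y$ and subtract it from the nonhomogeneous weak formulation $\mathfrak{B}(u,v)=\int_\Omega fv+\int_\Gamma gv$, you obtain $\mathfrak{B}(u-\widetilde{g},v)=\int_\Omega\bigl(f+\int_\Gamma\tfrac{g(y)\gamma(x,y)}{\int_\Omega\gamma(z,y)\,\mathrm{d}z}\,\mathrm{d}y\bigr)v\,\mathrm{d}x$ for all $v\in\V$, so the function solving \eqref{eq:transHomoProb} is $u-\widetilde{g}$, not $u+\widetilde{g}$ as the theorem text reads. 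Adding $\widetilde{g}$ instead would leave a spurious $2\int_\Gamma gv\,\mathrm{d}y$ and give $\Neu(u+\widetilde{g})=2g$ at the strong level. This is consistent with the remark immediately preceding the theorem, where the shift is $\widetilde{u}=u-\widetilde{g}$; the $+$ sign in the theorem statement is evidently a typo, and your writeup should identify the adjusted function explicitly as $u-\widetilde{g}$ rather than leaving the sign implicit.
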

\begin{proof}
    For all $v\in\V$ we find
	 \begin{align*}
        &\left\vert\,\int_{\Gamma}g(y)v(y)\,\mathrm{d}y\right\vert\\
        \leqslant&\int_{\Gamma}\left\vert\,g(y)v(y)\right\vert\,\mathrm{d}y\\
        =&\int_{\Omega}\int_{\Gamma}\frac{\vert g(y)\vert\gamma(y,x)}{\int_{\Omega}\gamma(y,z)\,\mathrm{d}z}\vert v(y)\vert\,\mathrm{d}y\,\,\mathrm{d}x\\
        \leqslant  &\int_{\Omega}\int_{\Gamma}\frac{\vert g(y)\vert\gamma(y,x)}{\int_{\Omega}\gamma(y,z)\,\mathrm{d}z}\,\mathrm{d}y\,\vert v(x)\vert\,\mathrm{d}x+\int_{\Omega}\int_{\Gamma}\frac{\vert g(y)\vert}{\int_{\Omega}\gamma(y,z)\,\mathrm{d}z}\vert v(x)-v(y)\vert\gamma(y,x)\,\mathrm{d}y\,\mathrm{d}x\\
        \leqslant & \|v\|_{\V} \max\left\{\int_{\Omega}\Big(\int_{\Gamma}\frac{\vert g(y)\vert\gamma(y,x)}{\int_{\Omega}\gamma(y,z)\,\mathrm{d}z}\,\mathrm{d}y\Big)^{2}\,\mathrm{d}x,\,\int_{\Gamma}\frac{g^{2}(y)}{\int_{\Omega}\gamma(y,z)\,\mathrm{d}z}\,\mathrm{d}y\right\},
    \end{align*}
    where the two terms in the $\max$--function are finite due to our assumptions. Thus $g$ satisfies the continuous functional condition. And by choosing a constant $v$, we see that $g\in \mathrm{L}^{1}(\Gamma)$. Because of our assumptions we further have
    \begin{equation*}
        \|\widetilde{g}\|_{\V}=\int_{\Omega}\int_{\Rd}\widetilde{g}^{2}(y)\gamma(y,x)\,\mathrm{d}y\,\mathrm{d}x=\int_{\Gamma}\frac{g^{2}(y)}{\int_{\Omega}\gamma(z,y)\,\mathrm{d}z}\,\mathrm{d}y<\infty,
    \end{equation*}
    so that $\widetilde{g}\in\V$. The rest follows by evaluating both $\La \widetilde{g}$ and $\Neu \widetilde{g}$ and by the linear dependence of the weak solution on the right hand side.
\end{proof}

\begin{remark}\label{rem:nons}
    In the case that $\gamma\in \kernel$ is nonsymmetric, the variational formulation of problem \eqref{NP} can also be obtained by the nonlocal integration by parts formula \Cref{thm:gauss}. However, unlike the symmetric case, there is, in general, no ``natural'' test function space in the nonsymmetric case. Furthermore, for all $\gamma\in\kernel$ we get
    \begin{align*}
        &\Neu u(y_{1})=-\int_{\Omega}u(x)\gamma(x,y_{1})\,\mathrm{d}x& &\text{for }y_{1}\in\widehat{\Gamma}(\Omega,\gamma)\setminus\Gamma(\Omega,\gamma)\\
        \text{and}\quad &\Neu u(y_{2})=u(y_{2})\int_{\Omega}\gamma(y_{2},x)\,\mathrm{d}x& &\text{for }y_{2}\in\widehat{\Gamma}(\Omega,\gamma)\setminus\Gamma(\Omega,\gamma^{\top}).
    \end{align*}
    So on $\widehat{\Gamma}(\Omega,\gamma)\setminus\Gamma(\Omega,\gamma)$ our Neumann condition reduces to a weighted volume constraint our Neumann condition reduces to a Dirichlet condition. For our nonlocal operator $\La$ we further have by Fubini's theorem
    \begin{equation*}
        \int_{\Omega}\int_{\widehat{\Gamma}(\Omega,\gamma)\setminus\Gamma(\Omega,\gamma)}\gamma(y,x)\,\mathrm{d}y\,\mathrm{d}x=\int_{\widehat{\Gamma}(\Omega,\gamma)\setminus\Gamma(\Omega,\gamma)}\int_{\Omega}\gamma(y,x)\,\mathrm{d}x\,\mathrm{d}y=0
    \end{equation*}
    and therefore, for a.e. $x\in\Omega$, 
    \begin{equation*}
        \begin{split}
            \La u(x)=&\int_{\Omega}u(x)\gamma(x,y)-u(y)\gamma(y,x)\,\mathrm{d}y\\
            &+\int_{\widehat{\Gamma}(\Omega,\gamma)\setminus\Gamma(\Omega,\gamma)}u(x)\gamma(x,y)\,\mathrm{d}y\\
            &+\int_{\Gamma(\Omega,\gamma)}u(x)\gamma(x,y)-u(y)\gamma(y,x)\,\mathrm{d}y.
        \end{split}
    \end{equation*}
\end{remark}

In the following Theorem we discuss some sufficient assumptions on a not necessarily symmetric kernel such that our nonlocal Neumann problem has a weak solution.
\begin{theorem}	\label{thm:nonsym}
	Let $\Omega\subset\Rd$ be bounded, nonempty and open, $f\in\mathrm{L}^{2}(\Omega)$, $\alpha\in\mathrm{L}^{\infty}(\Omega)$ and $\gamma\in\kernel$ with
	\begin{equation*}
	    \Big\|\frac{1}{2}\int_{\Omega}\vert\gamma(\cdot,y)-\gamma(y,\cdot)\vert\,\mathrm{d}y\Big\|_{\mathrm{L}^{\infty}(\Omega)}+\Big\|\int_{\widehat{\Gamma}}\vert\gamma(\cdot,y)-\gamma(y,\cdot)\vert\,\mathrm{d}y\Big\|_{\mathrm{L}^{\infty}(\Omega)}<\infty.
	\end{equation*}
	We assume that there is a constant $C>0$ with $\gamma(x,y)\leqslant C\gamma(y,x)$ for a.e.\;$(y,x)\in\widehat{\Gamma}\times\Omega$ and such that for a.e.\;$x\in\Omega$ we have
	\begin{equation*}
        0<c<\alpha(x)+\frac{1}{2}\int_{\Omega}\gamma(x,y)-\gamma(y,x)\,\mathrm{d}y-\frac{C+1}{2}\int_{\widehat{\Gamma}}\vert\gamma(x,y)-\gamma(y,x)\vert\,\mathrm{d}y.
    \end{equation*}
    Then the Neumann problem
	\begin{equation*}
    \left\{
    \begin{array}{llr}
         \mathcal{L}u(x)+\alpha(x) u(x)&=f(x) &\text{for }x\in\Omega,\\
         \Neu u(y)&=0 &\text{for }y\in\widehat{\Gamma},
    \end{array}
    \right.
    \end{equation*}
    has a weak solution, namely there is a function $u\in\V$ solving
    \begin{equation*}
        \begin{split}
            \widetilde{\mathfrak{B}}(u,v):=&\frac{1}{2}\int_{\Omega}\int_{\Omega}(u(x)\gamma(x,y)-u(y)\gamma(y,x))(v(x)-v(y))\,\mathrm{d}y\,\mathrm{d}x\\
            &+\int_{\Omega}\int_{\widehat{\Gamma}}(u(x)\gamma(x,y)-u(y)\gamma(y,x))(v(x)-v(y))\,\mathrm{d}y\,\mathrm{d}x\\
            & +\int_{\Omega}u(x)v(x)\alpha(x)\,\mathrm{d}x\\
            =&\int_{\Omega}f(x)v(x)\,\mathrm{d}x
        \end{split}
    \end{equation*}
    for all $v\in\V$.
\end{theorem}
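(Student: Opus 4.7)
The plan is to apply the Lax--Milgram theorem on the Hilbert space $\V$ (\Cref{corollary:hilb}) to the bilinear form $\widetilde{\mathfrak{B}}$, with the linear functional $v\mapsto \int_{\Omega}f(x)v(x)\,\mathrm{d}x$ as right--hand side; continuity of this functional is immediate from Cauchy--Schwarz.

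The decisive algebraic step is the identity
\begin{equation*}
    u(x)\gamma(x,y)-u(y)\gamma(y,x)=(u(x)-u(y))\gamma(y,x)+u(x)\bigl(\gamma(x,y)-\gamma(y,x)\bigr),
\end{equation*}
which separates each inner integrand in $\widetilde{\mathfrak{B}}$ into a ``symmetric'' gradient part with kernel $\gamma(y,x)$ and a ``cross'' part carrying only the antisymmetric combination $\gamma(x,y)-\gamma(y,x)$. Boundedness of $\widetilde{\mathfrak{B}}$ on $\V\times\V$ then follows from Cauchy--Schwarz: the gradient and $\alpha$--pieces are dominated by $\|u\|_{\V}\|v\|_{\V}$ directly, and the cross piece is controlled via the two $\mathrm{L}^{\infty}$ norms in the hypothesis together with the bound $|\gamma(x,y)-\gamma(y,x)|\leqslant (C+1)\gamma(y,x)$ on $\Omega\times\widehat{\Gamma}$; on $\Omega\times\Omega$ one in addition uses the swap identity $\int_{\Omega}\int_{\Omega}(v(x)-v(y))^{2}\gamma(x,y)\,\mathrm{d}y\,\mathrm{d}x=\int_{\Omega}\int_{\Omega}(v(x)-v(y))^{2}\gamma(y,x)\,\mathrm{d}y\,\mathrm{d}x$ to reduce to the $\V$--norm.

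Coercivity is the heart of the argument. Taking $v=u$ in the decomposed form, the $\Omega\times\Omega$ cross term collapses: the swap $x\leftrightarrow y$ shows that $\int_{\Omega}\int_{\Omega}u(x)u(y)(\gamma(x,y)-\gamma(y,x))\,\mathrm{d}y\,\mathrm{d}x=0$ by antisymmetry, so only the mass contribution $\int_{\Omega}u^{2}(x)\cdot\tfrac{1}{2}\int_{\Omega}(\gamma(x,y)-\gamma(y,x))\,\mathrm{d}y\,\mathrm{d}x$ survives. On $\Omega\times\widehat{\Gamma}$ no such cancellation occurs, and I estimate the cross term by Young's inequality
\begin{equation*}
    \bigl|u(x)(u(x)-u(y))(\gamma(x,y)-\gamma(y,x))\bigr|\leqslant \tfrac{1}{2\epsilon}u^{2}(x)|\gamma(x,y)-\gamma(y,x)|+\tfrac{\epsilon}{2}(u(x)-u(y))^{2}|\gamma(x,y)-\gamma(y,x)|,
\end{equation*}
bounding $|\gamma(x,y)-\gamma(y,x)|$ in the second summand by $(C+1)\gamma(y,x)$. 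With the forced choice $\epsilon=1/(C+1)$, the surviving gradient coefficient on $\Omega\times\widehat{\Gamma}$ is $1-\tfrac{\epsilon(C+1)}{2}=\tfrac{1}{2}$ and the mass penalty is exactly $-\tfrac{C+1}{2}\int_{\widehat{\Gamma}}|\gamma(x,y)-\gamma(y,x)|\,\mathrm{d}y$. Using that $\gamma(y,x)=0$ for $y\in\Rd\setminus(\Omega\cup\widehat{\Gamma})$ to merge the gradient parts, this yields
\begin{equation*}
    \widetilde{\mathfrak{B}}(u,u)\geqslant \tfrac{1}{2}\int_{\Omega}\int_{\Rd}(u(x)-u(y))^{2}\gamma(y,x)\,\mathrm{d}y\,\mathrm{d}x + c\int_{\Omega}u^{2}(x)\,\mathrm{d}x\geqslant \min\{\tfrac{1}{2},c\}\|u\|_{\V}^{2},
\end{equation*}
where the hypothesis on $\alpha$ has been invoked to bound the mass coefficient below by $c$.

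The main obstacle is precisely this tight bookkeeping: the Young parameter is rigidly dictated by the asymmetry constant $C$, and the coefficients in the hypothesis on $\alpha$ are calibrated to absorb the resulting worst--case penalty with no margin. With boundedness, coercivity, and continuity of the right--hand side all in place, Lax--Milgram then delivers the required $u\in\V$ solving $\widetilde{\mathfrak{B}}(u,v)=\int_{\Omega}f(x)v(x)\,\mathrm{d}x$ for every $v\in\V$.
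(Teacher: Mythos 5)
Your proposal is correct and follows essentially the same route as the paper: the same decomposition $u(x)\gamma(x,y)-u(y)\gamma(y,x)=(u(x)-u(y))\gamma(y,x)+u(x)(\gamma(x,y)-\gamma(y,x))$, the same $x\leftrightarrow y$ swap that turns the $\Omega\times\Omega$ cross term into the mass term $\int_{\Omega}u^{2}(x)\int_{\Omega}(\gamma(x,y)-\gamma(y,x))\,\mathrm{d}y\,\mathrm{d}x$, the same Young split with $\epsilon=1/(C+1)$ combined with $|\gamma(x,y)-\gamma(y,x)|\leqslant(C+1)\gamma(y,x)$ on $\Omega\times\widehat{\Gamma}$, and the same application of Lax--Milgram. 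The bookkeeping (coefficients $\tfrac{1}{2}$, $\tfrac{C+1}{2}$, and the final lower bound $\min\{\tfrac{1}{2},c\}\|u\|_{\V}^{2}$) agrees with what the paper's proof yields.
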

\begin{proof}
    By \Cref{thm:gauss}, we see that the definition of the weak solution is justified. By Hölder's inequality there is a constant $K>0$ such that for all $u,v\in\V$ we have  
    \begin{align*}
        \vert\widetilde{\mathfrak{B}}(u,v)\vert
        \leqslant&\int_{\Omega}\int_{\Rd}\vert u(x)\gamma(x,y)-u(y)\gamma(y,x)\vert\vert v(x)-v(y)\vert\,\mathrm{d}y\,\mathrm{d}x\\
        &+\int_{\Omega}\vert u(x)\vert \vert v(x)\vert \alpha(x)\,\mathrm{d}x\\
        \leqslant&\int_{\Omega}\int_{\Rd}\vert u(x)-u(y)\vert \vert v(x)-v(y)\vert \gamma(y,x)\,\mathrm{d}y\,\mathrm{d}x\\
        &+\int_{\Omega}\int_{\Rd} \vert u(x)\vert \vert \gamma(x,y)-\gamma(y,x)\vert\vert v(x)-v(y)\vert\,\mathrm{d}y\,\mathrm{d}x\\
        &+\int_{\Omega}\vert u(x)\vert\vert v(x)\vert\alpha(x)\,\mathrm{d}x\\
        \leqslant & K \|u\|_{\V}\|v\|_{\V}.
    \end{align*}
    Therefore $\widetilde{\mathfrak{B}}$ is bounded on $\V\times\V$ with
    \begin{equation*}
        \begin{split}
            \widetilde{\mathfrak{B}}(u,v)=&\mathfrak{B}(u,v)\\
            &+\frac{1}{2}\int_{\Omega}\int_{\Omega}  u(x)( \gamma(x,y)-\gamma(y,x))(u(x)-u(y))\,\mathrm{d}y\,\mathrm{d}x\\
            &+\int_{\Omega}\int_{\Gamma}  u(x)( \gamma(x,y)-\gamma(y,x))(u(x)-u(y))\,\mathrm{d}y\,\mathrm{d}x\\
            & +\int_{\Omega}u(x)v(x)\alpha(x)\,\mathrm{d}x.
        \end{split}
    \end{equation*}
    And because of 
    \begin{equation*}
        \begin{split}
            \int_{\Omega}\int_{\Omega}  u(x)&( \gamma(x,y)-\gamma(y,x))(v(x)-v(y))\,\mathrm{d}y\,\mathrm{d}x\\
            =&\int_{\Omega}\int_{\Omega}  u(y)( \gamma(x,y)-\gamma(y,x))(v(x)-v(y))\,\mathrm{d}y\,\mathrm{d}x,
        \end{split}
    \end{equation*}
    we obtain, for $u,v\in\V$, 
    \begin{align*}
       \int_{\Omega}\int_{\Omega}  u(x) &( \gamma(x,y)-\gamma(y,x))(u(x)-u(y))\,\mathrm{d}y\,\mathrm{d}x\\
        =&\frac{1}{2}\int_{\Omega}\int_{\Omega}  (u(x)+u(y))( \gamma(x,y)-\gamma(y,x))(u(x)-u(y))\,\mathrm{d}y\,\mathrm{d}x\\
        =&\int_{\Omega}u^{2}(x)\int_{\Omega}  ( \gamma(x,y)-\gamma(y,x))\,\mathrm{d}y\,\mathrm{d}x.
    \end{align*}
    Moreover we estimate 
    \begin{align*}
        &\left\vert\int_{\Omega}\int_{\widehat{\Gamma}}  u(x)( \gamma(x,y)-\gamma(y,x))(u(x)-u(y))\sqrt{\frac{C+1}{C+1}}\,\mathrm{d}y\,\mathrm{d}x\right\vert\\
        \leqslant&\frac{C+1}{2}\int_{\Omega}u^{2}(x)\int_{\widehat{\Gamma}}\vert\gamma(x,y)-\gamma(y,x)\vert\,\mathrm{d}y\,\mathrm{d}x\\
        &+\frac{1}{2}\int_{\Omega}\int_{\widehat{\Gamma}}  (u(x)-u(y))^{2}\frac{\vert\gamma(x,y)-\gamma(y,x)\vert}{C+1}\,\mathrm{d}y\,\mathrm{d}x\\
        \leqslant&\frac{C+1}{2}\int_{\Omega}u^{2}(x)\int_{\widehat{\Gamma}}\vert\gamma(x,y)-\gamma(y,x)\vert\,\mathrm{d}y\,\mathrm{d}x\\
        &+\frac{1}{2}\int_{\Omega}\int_{\widehat{\Gamma}}  (u(x)-u(y))^{2}\gamma(y,x)\,\mathrm{d}y\,\mathrm{d}x
    \end{align*}
    for $u\in\V$. In other words $\widetilde{\mathfrak{B}}$ is coercive on $\V$ and by the Lax--Milgram Theorem there exists a unique weak solution.
\end{proof}


\section{Nonlocal Poincar\'e inequality}\label{sec:poin}

Similar to the local Neumann problem the Lax--Milgram Theorem provides us an existence and uniqueness result for the weak solution of problem \eqref{NP}. Note, that because in \Cref{thm:exriesz} the bilinear form is symmetric, we could directly apply Riesz representation theorem.

In the local Neumann problem the Poincar\'e inequality (see \cite[Chapter 5.8.1.]{evans2010partial}), namely there is a constant $C>0$ such that for all $u\in\mathrm{H}^{1}(\Omega)$ we have
\begin{align*}
    2\lambda(\Omega)\int_{\Omega}u^{2}(x)\,\mathrm{d}x-2\left(\int_{\Omega}u(y)\,\mathrm{d}y\right)^{2}=&\int_{\Omega}\int_{\Omega}(u(x)-u(y))^{2}\,\mathrm{d}y\,\mathrm{d}x\\
    \leqslant &C\int_{\Omega}\|\nabla u(x)\|^{2}\;\mathrm{d}x,
\end{align*}
is most commonly used in order to show that the corresponding bilinear form is coercive.
In the literature the inequalities most commonly called nonlocal Poincar\'e type inequality are either in the shape of (see for example \cite{hitch})
\begin{align*}
    \int_{\Omega}\int_{\Omega}(u(x)-u(y))^{2}\,\mathrm{d}y\,\mathrm{d}x\leqslant C \int_{\Omega}\int_{\Omega }(u(x)-u(y))^{2}\gamma(y,x)\,\mathrm{d}y\,\mathrm{d}x
\end{align*}
or (see for example \cite{gunz_1})
\begin{align*}
    \int_{\Omega}\int_{\Omega}(u(x)-u(y))^{2}\,\mathrm{d}y\,\mathrm{d}x\leqslant C \int_{\Omega\cup \Omega_{I}}\int_{\Omega\cup \Omega_{I}}(u(x)-u(y))^{2}\gamma(y,x)\,\mathrm{d}y\,\mathrm{d}x,
\end{align*}
where $\gamma\in\kernel$ and 
\begin{equation*}
    \Omega_{I}:=\{y\in\Rd\setminus\Omega \text{ such that there is a }  x\in\Omega \text{ with } \gamma(x,y)>0 \}.
\end{equation*}
We however refer
\begin{align*}
    \int_{\Omega}\int_{\Omega}(u(x)-u(y))^{2}\,\mathrm{d}y\,\mathrm{d}x\leqslant C \int_{\Omega}\int_{\Rd}(u(x)-u(y))^{2}\gamma(y,x)\,\mathrm{d}y\,\mathrm{d}x
\end{align*}
as nonlocal Poincar\'e  inequality. We easily see that
\begin{align*}
    \int_{\Omega}\int_{\Omega}(u(x)-u(y))^{2}\gamma(y,x)\,\mathrm{d}y\,\mathrm{d}x&\leqslant  \int_{\Omega}\int_{\Rd}(u(x)-u(y))^{2}\gamma(y,x)\,\mathrm{d}y\,\mathrm{d}x\\
    &\leqslant\int_{\Omega\cup\Omega_{I}}\int_{\Omega\cup\Omega_{I}}(u(x)-u(y))^{2}\gamma(y,x)\,\mathrm{d}y\,\mathrm{d}x
\end{align*}
holds. 
\begin{definition}
    Let $\Omega\subset\Rd$ be a nonempty, open and bounded set and $\gamma\in\kernel$. We say that the nonlocal Poincar\'e inequality holds (on $\V$), if there is a constant $C>0$ such that for all $u\in\V$ we have
    \begin{align*}
    	\int_{\Omega}\int_{\Omega}(u(x)-u(y))^{2}\,\mathrm{d}y\,\mathrm{d}x\leqslant C\int_{\Omega}\int_{\Rd}(u(x)-u(y))^{2}\gamma(y,x)\,\mathrm{d}y\,\mathrm{d}x.
    \end{align*}
    Every constant $C>0$ for which the nonlocal Poincar\'e inequality is satisfied is called Poincar\'e constant.
\end{definition}
Recalling that for every $\eta\in\kernel$ we set 
\begin{align*}
	\mathfrak{B}_{\eta}(u,v):=&\frac{1}{2}\int_{\Omega}\int_{\Omega}(u(x)-u(y))(v(x)-v(y))\,\eta(y,x)\,\mathrm{d}y\,\mathrm{d}x\\
	&+\int_{\Omega}\int_{\Gamma}(u(x)-u(y))(v(x)-v(y))\,\eta(y,x)\,\mathrm{d}y\,\mathrm{d}x
\end{align*}
for $u,v\in\mathrm{V}(\Omega;\eta)$, we obtain
\begin{lemma}\label{lem:poeq}
    Let $\Omega\subset\Rd$ be a nonempty, open and bounded set and $\gamma\in\kernel$. Then the following are equivalent:
     \begin{enumerate}[label=(\roman*)]
         \item The nonlocal Poincar\'e inequality holds.
         \item There is a constant $C>0$ such that for all $u\in\V$ we have\\ $\mathfrak{B}_{\chi_{\Omega\times \Omega}}(u,u)\leqslant C \mathfrak{B}_{\gamma}(u,u)$.
         \item There is a constant $C>0$ such that for all $u\in\V$ we have\\ $\|u-u_{\Omega}\|_{\Ls}\leqslant C \mathfrak{B}_{\gamma}(u,u)$, where $u_{\Omega}=\frac{1}{\lambda(\Omega)}\int_{\Omega}u(x)\,\mathrm{d}x$.
     \end{enumerate}
\end{lemma}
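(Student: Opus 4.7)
The plan is to reduce everything to three simple identities and a two-sided comparison, after which the equivalences fall out by rearrangement.

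First, I would simplify $\mathfrak{B}_{\chi_{\Omega\times\Omega}}(u,u)$. Since $\Gamma\subset\Rd\setminus\Omega$, the indicator $\chi_{\Omega\times\Omega}(y,x)$ vanishes on $\Omega\times\Gamma$, so the second summand in the definition of $\mathfrak{B}_\eta$ drops out and
\begin{equation*}
    \mathfrak{B}_{\chi_{\Omega\times\Omega}}(u,u)=\tfrac{1}{2}\int_{\Omega}\int_{\Omega}(u(x)-u(y))^{2}\,\mathrm{d}y\,\mathrm{d}x.
\end{equation*}

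Next, I would compare $\mathfrak{B}_{\gamma}(u,u)$ with the right-hand side of the nonlocal Poincar\'e inequality. Splitting $\Rd=\Omega\cup\Gamma\cup(\Rd\setminus(\Omega\cup\Gamma))$ and using the definition \eqref{eq:DefinitionGamma} of $\Gamma$, which gives $\gamma(y,x)=0$ for a.e.\,$x\in\Omega$ whenever $y\in\Rd\setminus(\Omega\cup\Gamma)$, I obtain
\begin{equation*}
    \int_{\Omega}\!\int_{\Rd}(u(x)-u(y))^{2}\gamma(y,x)\,\mathrm{d}y\,\mathrm{d}x=\int_{\Omega}\!\int_{\Omega}(u(x)-u(y))^{2}\gamma(y,x)\,\mathrm{d}y\,\mathrm{d}x+\int_{\Omega}\!\int_{\Gamma}(u(x)-u(y))^{2}\gamma(y,x)\,\mathrm{d}y\,\mathrm{d}x.
\end{equation*}
Comparing with the definition of $\mathfrak{B}_{\gamma}(u,u)$, which contains the same two terms but with a factor $\tfrac{1}{2}$ on the first, yields the two-sided bound
\begin{equation*}
    \mathfrak{B}_{\gamma}(u,u)\;\leqslant\;\int_{\Omega}\!\int_{\Rd}(u(x)-u(y))^{2}\gamma(y,x)\,\mathrm{d}y\,\mathrm{d}x\;\leqslant\;2\,\mathfrak{B}_{\gamma}(u,u).
\end{equation*}
Combined with the identity from the previous step this immediately gives (i)$\Leftrightarrow$(ii), up to adjusting constants by a factor of $4$.

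Finally, I would establish the well-known algebraic identity, obtained by expanding the square and using Fubini,
\begin{equation*}
    \int_{\Omega}\int_{\Omega}(u(x)-u(y))^{2}\,\mathrm{d}y\,\mathrm{d}x=2\lambda(\Omega)\int_{\Omega}u^{2}(x)\,\mathrm{d}x-2\left(\int_{\Omega}u(x)\,\mathrm{d}x\right)^{2}=2\lambda(\Omega)\,\|u-u_{\Omega}\|_{\Ls}^{2}.
\end{equation*}
Substituting this into (i) (interpreted with the $L^{2}$-norm squared, as the statement requires) and using the comparison from the second paragraph, (i) becomes $\lambda(\Omega)\|u-u_{\Omega}\|_{\Ls}^{2}\leqslant C\,\mathfrak{B}_{\gamma}(u,u)$ after absorbing numerical constants, which is exactly (iii); conversely, (iii) combined with the identity and the comparison gives back (i). This closes the cycle.

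There is no real obstacle here: the whole argument is a bookkeeping exercise once one observes that $\Gamma$ captures precisely the support in $\Rd\setminus\Omega$ of the kernel slices $\gamma(\cdot,x)$ for $x\in\Omega$, so that the $\Rd$-integral in the Poincar\'e inequality and the $(\Omega\cup\Gamma)$-integral defining $\mathfrak{B}_{\gamma}$ differ only by a harmless factor. The one point requiring care is that the statement (iii) as written has $\|u-u_{\Omega}\|_{\Ls}$ (not squared) on the left, which I read as a typographical simplification of $\|u-u_{\Omega}\|_{\Ls}^{2}$; under this reading the constants transfer transparently.
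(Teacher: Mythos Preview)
Your proposal is correct and follows essentially the same route as the paper: both arguments rest on the identity $\int_{\Omega}\int_{\Omega}(u(x)-u(y))^{2}\,\mathrm{d}y\,\mathrm{d}x=2\lambda(\Omega)\|u-u_{\Omega}\|_{\Ls}^{2}$ together with the two-sided comparison between $\mathfrak{B}_{\gamma}(u,u)$ and $\int_{\Omega}\int_{\Rd}(u(x)-u(y))^{2}\gamma(y,x)\,\mathrm{d}y\,\mathrm{d}x$. Your treatment is in fact slightly more careful, since you track the factor $\tfrac{1}{2}$ in $\mathfrak{B}_{\chi_{\Omega\times\Omega}}$ and flag that (iii) should be read with $\|u-u_{\Omega}\|_{\Ls}^{2}$ rather than the unsquared norm.
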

\begin{proof}
    Let $u\in\V$, then the binomial theorem and linearity of the integral yields
    \begin{align*}
        2\lambda(\Omega)\,\|u-u_{\Omega}\|_{\Ls}^{2}=&2\lambda(\Omega)\int_{\Omega}(u^{2}(x)-2 u_{\Omega} u(x)+(u_{\Omega})^{2})\,\mathrm{d}x\\
        =&2\lambda(\Omega)\int_{\Omega}u^{2}(x)\,\mathrm{d}x-4\left(\int_{\Omega}u(y)\,\mathrm{d}y\right)^{2}+2 \lambda(\Omega)^{2}(u_{\Omega})^{2}\\
        =&2\lambda(\Omega)\int_{\Omega}u^{2}(x)\,\mathrm{d}x-2\left(\int_{\Omega}u(y)\,\mathrm{d}y\right)^{2}\\
        =&\int_{\Omega}\int_{\Omega}(u^{2}(x)-2(x)u(y)+u^{2}(y))\,\mathrm{d}y\,\mathrm{d}x\\
        =&\int_{\Omega}\int_{\Omega}(u(x)-u(y))^{2}\,\mathrm{d}y\,\mathrm{d}x\\
        =&\mathfrak{B}_{\chi_{\Omega\times \Omega}}(u,u).
    \end{align*}
    And because of 
    \begin{align*}
        \frac{1}{2}\int_{\Omega}\int_{\Rd}(u(x)-u(y))^{2}\gamma(x,y)\,\mathrm{d}y\,\mathrm{d}x\leqslant \mathfrak{B}_{\gamma}(u,u)\leqslant \int_{\Omega}\int_{\Rd}(u(x)-u(y))^{2}\gamma(x,y)\,\mathrm{d}y\,\mathrm{d}x
    \end{align*}
    we get the equivalences.
\end{proof}
\begin{remark}
    Let $\Omega\subset\Rd$ be a nonempty, open and bounded set, then we have for all measurable functions $u\colon \Rd\to \mathbb{R}$ with $\mathfrak{B}_{\chi_{\Omega\times \Omega}}(u,u)<\infty$ that $u|_{\Omega}\in\mathrm{L}^{2}(\Omega)$ holds (see proof of Proposition 2.1 in \cite{mosco}). Furthermore this implies that the nonlocal Poincar\'e inequality holds on $\V$ if and only if there is a constant $C>0$ such that for every measurable function $v\colon\Rd\to\mathbb{R}$ with $v|_{\Omega}\in\mathrm{L}^{2}(\Omega)$ we have 
     \begin{align*}
    	\int_{\Omega}\int_{\Omega}(v(x)-v(y))^{2}\,\mathrm{d}y\,\mathrm{d}x\leqslant C\int_{\Omega}\int_{\Rd}(v(x)-v(y))^{2}\gamma(y,x)\,\mathrm{d}y\,\mathrm{d}x.
    \end{align*}
\end{remark}
Now we give some sufficient assumptions such that  the nonlocal Poincar\'e inequality holds:
\begin{theorem}\label{thm:poallg}
    Let $\Omega\subset\Rd$ be a nonempty, open and bounded set and $\gamma\in\kernel$ with
    \begin{align*}
        0<\int_{\Rd}\einf_{x\in\Omega}\gamma(y,x)\,\mathrm{d}y,
    \end{align*}
    then the nonlocal Poincar\'e inequality holds.
\end{theorem}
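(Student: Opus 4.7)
The plan is to produce an explicit Poincar\'e constant by exploiting a uniform-in-$x$ lower bound on the kernel and a standard ``double triangle inequality'' averaging trick. Set
\begin{equation*}
   \beta(y) := \einf_{x \in \Omega} \gamma(y,x),\qquad M := \int_{\Rd} \beta(y)\,\mathrm{d}y,
\end{equation*}
which by hypothesis satisfies $0 < M \leqslant \infty$; after intersecting with a large ball, we may assume $0 < M < \infty$.

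First I would justify, as an auxiliary step, that $\beta$ is measurable and that $\gamma(y,x) \geqslant \beta(y)$ for almost every $(y,x) \in \Rd \times \Omega$. This follows from the definition of essential infimum: for almost every $y$, the inequality $\gamma(y,x) \geqslant \beta(y)$ holds for a.e.\;$x \in \Omega$, and an application of Fubini's theorem converts this into an a.e.\;statement on the product space. Consequently, for every $u \in \V$,
\begin{equation*}
   \int_{\Omega}\int_{\Rd}(u(x)-u(z))^{2}\beta(z)\,\mathrm{d}z\,\mathrm{d}x
   \leqslant \int_{\Omega}\int_{\Rd}(u(x)-u(z))^{2}\gamma(z,x)\,\mathrm{d}z\,\mathrm{d}x.
\end{equation*}

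Next, I would insert an auxiliary variable $z$ via the elementary inequality
\begin{equation*}
   (u(x)-u(y))^{2} \leqslant 2\,(u(x)-u(z))^{2} + 2\,(u(z)-u(y))^{2},
\end{equation*}
multiply by $\beta(z)$, and integrate in $z$ over $\Rd$. Dividing by $M > 0$ yields, for a.e.\;$x,y \in \Omega$,
\begin{equation*}
   (u(x)-u(y))^{2} \leqslant \frac{2}{M}\int_{\Rd}(u(x)-u(z))^{2}\beta(z)\,\mathrm{d}z + \frac{2}{M}\int_{\Rd}(u(z)-u(y))^{2}\beta(z)\,\mathrm{d}z.
\end{equation*}
Now I integrate this pointwise bound over $(x,y) \in \Omega \times \Omega$. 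By Fubini and the symmetry of the two resulting terms in the roles of $x$ and $y$, the right-hand side collapses to $\tfrac{4\lambda(\Omega)}{M}\int_{\Omega}\int_{\Rd}(u(x)-u(z))^{2}\beta(z)\,\mathrm{d}z\,\mathrm{d}x$, and combining with the kernel comparison from the first step gives
\begin{equation*}
   \int_{\Omega}\int_{\Omega}(u(x)-u(y))^{2}\,\mathrm{d}y\,\mathrm{d}x \leqslant \frac{4\lambda(\Omega)}{M}\int_{\Omega}\int_{\Rd}(u(x)-u(z))^{2}\gamma(z,x)\,\mathrm{d}z\,\mathrm{d}x,
\end{equation*}
so $C = 4\lambda(\Omega)/M$ is a Poincar\'e constant.

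The only subtle point I foresee is the measurability of the essential-infimum function $\beta$ and the a.e.\;pointwise comparison $\gamma(y,x)\geqslant\beta(y)$ on the product; both are standard but deserve explicit mention. Everything else is an application of the elementary $2ab$-type inequality and Fubini's theorem, which are legitimate here since the right-hand side we are bounding is finite whenever $u \in \V$.
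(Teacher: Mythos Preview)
Your argument is essentially the paper's: insert an auxiliary variable weighted by $\beta(z)=\einf_{x\in\Omega}\gamma(z,x)$, apply $(a+b)^{2}\leqslant 2a^{2}+2b^{2}$, integrate, and use $\beta(z)\leqslant\gamma(z,x)$ to pass back to the $\gamma$–energy. One small correction to your reduction step: merely intersecting with a large ball need not force $M<\infty$ (take $d=1$ and $\beta(y)=|y|^{-1}$ near the origin), so you should also restrict to a sublevel set $\{\beta<C\}$—exactly as the paper does—choosing a bounded $A\subset\{\beta<C\}$ with $0<\int_{A}\beta\leqslant C\,\lambda(A)<\infty$.
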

\begin{proof}
    Choose a constant $C>0$ such that there is a bounded measurable set 
    \begin{equation*}
        A\subset\{y\in\Rd\colon\einf_{x\in\Omega}\gamma(y,x)<C\}\text{ with } 0<c:=\int_{A}\einf_{x\in\Omega}\gamma(y,x)\,\mathrm{d}y,
    \end{equation*}
    then for every $u\in\V$ Jensen's inequality yields
    \begin{align*}
        &\int_{\Omega}\int_{\Omega}(u(x)-u(y))^{2}\,\mathrm{d}y\,\mathrm{d}x\\
        =&\frac{1}{c}\int_{\Omega}\int_{\Omega}\int_{A}(u(x)-u(t)+u(t)-u(y))^{2}\einf_{s\in\Omega}(\gamma(t,s))\,\mathrm{d}t\,\mathrm{d}y\,\mathrm{d}x\\
        \leqslant &\frac{2\lambda(\Omega)}{c}\int_{\Omega}\int_{\Rd}(u(x)-u(y))^{2}\gamma(y,x)\,\mathrm{d}y\,\mathrm{d}x.
    \end{align*}
\end{proof}
Due to Theorem 6.7 in \cite{hitch} we see, that the assumptions in \Cref{thm:poallg} are only sufficient conditions for the nonlocal Poincar\'e inequality. For this reason we now relax the assumptions on $\gamma$ by using this Lemma:
\begin{lemma}\label{lem:eps_abs}
    Let $\Omega\subset \Rd$ be a bounded domain and $\varepsilon>0$. Then there is a $C>0$ such that for all $u\in\Ls$ we have
    \begin{align*}
        \int_{\Omega}\int_{\Omega}(u(x)-u(y))^{2}\,\mathrm{d}y\,\mathrm{d}x\leqslant C\,\int_{\Omega}\int_{\Omega}(u(x)-u(y))^{2}\chi_{\{\|x-y\|<\varepsilon\}}\,\mathrm{d}y\,\mathrm{d}x.
    \end{align*}
\end{lemma}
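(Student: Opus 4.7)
The plan is to prove this $\varepsilon$-absorption inequality by a chaining argument: since $\Omega$ is bounded, I cover $\overline{\Omega}$ by finitely many small balls, connect any two of them through a chain of neighboring balls, and use Cauchy--Schwarz on the resulting telescoping sum to bound $(u(x)-u(y))^2$ by a sum of squared differences across short jumps, each of which lies inside the truncated integral on the right-hand side.

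First I would pick a finite open cover $B_1, \ldots, B_N$ of the compact set $\overline{\Omega}$ by Euclidean balls of radius $\varepsilon/4$, set $E_i := B_i \cap \Omega$, and discard those indices with $\lambda(E_i) = 0$. Define an adjacency relation $i \sim j$ iff $d(E_i, E_j) < \varepsilon/2$; the diameter bound then forces $\|x-y\| < \varepsilon$ whenever $i \sim j$, $x \in E_i$ and $y \in E_j$. The crucial topological step is to verify that the resulting adjacency graph on the remaining indices is connected, which uses essentially that $\Omega$ is a domain: if the graph split into an equivalence class $\mathcal{J}$ and a nonempty complement $\mathcal{J}^c$, the corresponding unions $A := \bigcup_{i \in \mathcal{J}} E_i$ and $B := \bigcup_{j \in \mathcal{J}^c} E_j$ would be nonempty, open (as finite unions of intersections of open balls with the open set $\Omega$), would cover $\Omega$, and would be disjoint (since a common point of $E_i \cap E_j$ would force $i \sim j$), contradicting connectedness of $\Omega$.

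Next, for any ordered pair $(i,j)$ I would fix a chain $i = i_0 \sim i_1 \sim \cdots \sim i_m = j$ of length $m \leq N$. For arbitrary $x \in E_i$, $y \in E_j$ and points $z_l \in E_{i_l}$ with $z_0 = x$, $z_m = y$, Cauchy--Schwarz gives $(u(x) - u(y))^2 \leq m \sum_{l=0}^{m-1} (u(z_l) - u(z_{l+1}))^2$. Averaging over $(z_1, \ldots, z_{m-1}) \in E_{i_1} \times \cdots \times E_{i_{m-1}}$ (each factor of measure at least $c := \min_i \lambda(E_i) > 0$) and integrating $(x,y)$ over $E_i \times E_j$, each summand collapses, after integrating out the irrelevant variables, to a bounded multiple of $\int_{E_{i_l}} \int_{E_{i_{l+1}}} (u(z) - u(w))^2 \, dw \, dz$; this in turn is controlled by $\int_\Omega \int_\Omega (u(z) - u(w))^2 \chi_{\{\|z-w\|<\varepsilon\}} \, dw \, dz$ since adjacency guarantees $\|z-w\| < \varepsilon$ on all of $E_{i_l} \times E_{i_{l+1}}$.

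Finally, summing over the at most $N^2$ index pairs and the at most $N$ chain positions, and using $\int_\Omega \int_\Omega \leq \sum_{i,j} \int_{E_i} \int_{E_j}$, produces a constant $C = C(\Omega, \varepsilon)$ depending only on $N$, $c$, and $\lambda(\Omega)$. The main obstacle is the topological step—the connectedness of the adjacency graph—which leans essentially on $\Omega$ being a domain; without connectedness, two components of $\Omega$ separated by distance greater than $\varepsilon$ with $u$ taken constant on each immediately defeat the inequality, so this hypothesis cannot be weakened.
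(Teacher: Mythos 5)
Your proposal follows essentially the same chaining strategy as the paper: a finite cover of the compact $\overline\Omega$ by balls of radius comparable to $\varepsilon$, a telescoping sum estimated via Cauchy--Schwarz, and an averaging step over the intermediate variables using a uniform lower bound on the measure of the ball-intersections. The only structural difference is that the paper builds a single ``snake'' of centers $\omega_1,\ldots,\omega_N$ with $\|\omega_{j+1}-\omega_j\|\leqslant\varepsilon/2$ (implicitly exploiting connectedness), whereas you make the connectedness of the adjacency graph explicit and pick a chain for each ordered pair of indices; you also sidestep the paper's separate observation that $\inf_{x\in\Omega}\lambda(\mathrm{B}_{\varepsilon/4}(x)\cap\Omega)>0$ by using only the finitely many cells $E_i$ of your fixed cover. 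Your closing remark that the connectedness hypothesis cannot be dropped is correct and well illustrated.

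One quantitative slip, though: with balls of radius $\varepsilon/4$ and adjacency defined by $d(E_i,E_j)<\varepsilon/2$, the sharpest general bound for $x\in E_i$, $y\in E_j$ with $i\sim j$ is
\begin{equation*}
\|x-y\| \leqslant \operatorname{diam}(E_i) + d(E_i,E_j) + \operatorname{diam}(E_j) < \tfrac{\varepsilon}{2}+\tfrac{\varepsilon}{2}+\tfrac{\varepsilon}{2} = \tfrac{3\varepsilon}{2},
\end{equation*}
so the claim that adjacency ``forces $\|x-y\|<\varepsilon$'' does not hold as stated. The fix is cosmetic and does not affect the architecture: either shrink the ball radius to $\varepsilon/6$, or declare $i\sim j$ iff $E_i\cap E_j\neq\emptyset$ (which gives $\|x-y\|<\varepsilon/2+\varepsilon/2=\varepsilon$ and still makes the graph-connectedness argument go through), or define adjacency on the centers by $\|\omega_i-\omega_j\|<\varepsilon/2$.
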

\begin{proof}
    Due to the Heine–Borel Theorem we know that $\overline{\Omega}$ is compact. Meaning that there is a $N\in \mathbb{N}$ and a sequence $(\omega_{i})_{i\in\mathbb{N}}$ in $\Omega$ such that
    \begin{align*}
	    \overline{\Omega}\subset \bigcup\limits_{i\in\mathbb{N},i\leqslant N}\mathrm{B}_{\frac{\varepsilon}{4}}(\omega_{i})\quad\text{and}\quad \|\omega_{j+1}-\omega_{j}\|\leqslant \frac{\varepsilon}{2} \text{ for }j=1,\ldots,N-1. 
    \end{align*} 
    Let $x,y\in\Omega$, then, without loss of generality, we now assume, that $x\in \mathrm{B}_{\frac{\varepsilon}{4}}(\omega_{1})$. Let $j\in\{1,\ldots,N\}$ satisfy $y\in \mathrm{B}_{\frac{\varepsilon}{4}}(\omega_{j})$ and let $y_{\ell}\in \mathrm{B}_{\frac{\varepsilon}{4}}(0)$ for $\ell=1,\ldots,N$. Then
    \begin{align*}
    	&\left\|x-\omega_{1}+y_{1}\right\|\leqslant \frac{\varepsilon}{4}+\frac{\varepsilon}{4}\leqslant\varepsilon,\quad\left\|\omega_{j}-y+y_{j}\right\|\leqslant \frac{\varepsilon}{4}+\frac{\varepsilon}{4}\leqslant\varepsilon\\
	    \text{and}\quad&\left\|\omega_{i}+y_{i}-w_{i+1}-y_{i+1}\right\|\leqslant\left\|\omega_{i}-\omega_{i+1}\right\|+\left\|y_{i}\right\|+\left\|y_{i+1}\right\|\leqslant \varepsilon
    \end{align*}
    for $i=1,\ldots,\ell-1$. In other words for every $x,y\in\Omega$ there is a sequence $(z_{i})_{i\in\mathbb{N}}$ in $ \Omega$ such that 
    \begin{align*}
    	&\left\|x-z_{1}+y_{1}\right\|\leqslant \frac{\varepsilon}{4}+\frac{\varepsilon}{4}\leqslant\varepsilon,\quad\left\|z_{N}-y+y_{\ell}\right\|\leqslant \frac{\varepsilon}{4}+\frac{\varepsilon}{4}\leqslant\varepsilon\\
	   \text{and} \quad& \left\|z_{i}+y_{i}-z_{i+1}-y_{i+1}\right\|\leqslant \left\|z_{i}-z_{i+1}\right\|+\left\|y_{i}\right\|+\left\|y_{i+1}\right\|\leqslant \varepsilon
    \end{align*}
    for $i=1,\ldots,N-1$. Setting $D_{i}(x,y)= \left(\mathrm{B}_{\frac{\varepsilon}{4}}\left(z_{i}\right)\right)\cap\Omega$ for $i=1,\ldots,N$ we conclude that $(x_{1},\ldots,x_{N})\in\mathop{\mathsf{X}}\limits_{i=1}^{N} D_{i}(x,y)$ implies $\|x-x_{1}\|,\,\|x_{N}-y\|\,,\|x_{j}-x_{j+1}\|\leqslant\varepsilon$ for all $j=1,\ldots,N-1$ and therefore
    \begin{equation*}
        \begin{split}
           &\mathop{\mathsf{X}}\limits_{i=1}^{N} D_{i}(x,y)\\
           \subset&\{(v_{1},\ldots,v_{N})\in\Omega^{N}\colon \|x-v_{1}\|,\,\|v_{N}-y\|,\|v_{j}-v_{j+1}\|\leqslant\varepsilon \text{ for }j=1,\ldots,N-1\}.
        \end{split}
    \end{equation*}
    Because $\overline{\Omega}$ is compact and $\Omega$ is open there exists a constant $c>0$ for which $c\leqslant \inf\limits_{x\in\Omega}\lambda(\mathrm{B}_{\frac{\varepsilon}{4}}(x)\cap \Omega)$ holds. Hence we estimate
    \begin{align*}
    	&\int_{\Omega}\int_{\Omega}(u(x)-u(y))^{2}\,\mathrm{d}y\,\mathrm{d}x\\
    	\leqslant&\frac{1}{c^{N}} \int_{\Omega}\int_{D_{1}(x,y)}\int_{D_{2}(x,y)}\ldots\int_{D_{N}(x,y)}\int_{\Omega}(u(x)-u(y))^{2}\,\mathrm{d}y\,\mathrm{d}x_{N}\ldots\,\mathrm{d}x_{2}\,\mathrm{d}x_{1}\,\mathrm{d}x\\
    	\leqslant  &\frac{1}{c^{N}} \underbrace{ \int_{\Omega}\int_{\mathrm{B}_{\varepsilon}(x)}\int_{\mathrm{B}_{\varepsilon}(x_{1})}\ldots\int_{\mathrm{B}_{\varepsilon}(x_{N})}\int_{\Omega}(u(x)-u(y))^{2}\prod\limits_{i=1}^{N}\chi_{\Omega}(x_{i})\,\mathrm{d}y\,\mathrm{d}x_{N}\ldots\,\mathrm{d}x_{2}\,\mathrm{d}x_{1}\,\mathrm{d}x}_{=:J}
    \end{align*}
    For $j=1,2,\ldots,N-1$ set 
    \begin{align*}
        &J_{0}:= \int_{\Omega}\int_{\mathrm{B}_{\varepsilon}(x)}\ldots\int_{\mathrm{B}_{\varepsilon}(x_{N})}\int_{\Omega}(u(x)-u(x_{1}))^{2}\prod\limits_{i=1}^{N}\chi_{\Omega}(x_{i})\,\mathrm{d}y\,\mathrm{d}x_{N}\ldots\,\mathrm{d}x_{1}\,\mathrm{d}x,\\
        &J_{j}:=\int_{\Omega}\int_{\mathrm{B}_{\varepsilon}(x)}\ldots\int_{\mathrm{B}_{\varepsilon}(x_{N})}\int_{\Omega}(u(x_{j})-u(x_{j+1}))^{2}\prod\limits_{i=1}^{N}\chi_{\Omega}(x_{i})\,\mathrm{d}y\,\mathrm{d}x_{N}\ldots\,\mathrm{d}x_{1}\,\mathrm{d}x,\\
        \text{and}\,&J_{N}:=\int_{\Omega}\int_{\mathrm{B}_{\varepsilon}(x)}\ldots\int_{\mathrm{B}_{\varepsilon}(x_{N})}\int_{\Omega}(u(x_{N})-u(y))^{2}\prod\limits_{i=1}^{N}\chi_{\Omega}(x_{i})\,\mathrm{d}y\,\mathrm{d}x_{N}\ldots\,\mathrm{d}x_{1}\,\mathrm{d}x.
    \end{align*}
    Then for all  $j=0,1,2,\ldots,N$ we estimate
    \begin{equation*}
        J_{j}\leqslant (\lambda(\Omega))^{N}\displaystyle{\int_{\Omega}\int_{\|x-y\|\leqslant\varepsilon}(u(x)-u(y))^{2}\chi_{\Omega}(y)\,\mathrm{d}y\,\mathrm{d}x}.
    \end{equation*}
    And by iteratively applying Jensen's inequality we get 
    \begin{align*}
        &(u(x)-u(y))^{2}\\
        =&\left(u(x)-u(x_{1})+\left(\sum\limits_{j=1}^{N-1}(u(x_{j})-u(x_{j+1}))\right)+u(x_{N})-u(y)\right)^{2}\\
        \leqslant &(N+1)\left((u(x)-u(x_{1}))^{2}+\left(\sum\limits_{j=1}^{N-1}(u(x_{j})-u(x_{j+1}))^{2}\right)+(u(x_{N})-u(y))^{2}\right)
    \end{align*}
    and therefore  conclude
    \begin{align*}
    	J\leqslant &(N+1)\left(J_{0}+\left(\sum\limits_{j=1}^{N-1}J_{j}\right)+J_{N}\right)\\
    	\leqslant &(N+1)^{2}\left(\lambda(\Omega)\right)^{N} \int_{\Omega}\int_{\|x-y\|\leqslant\varepsilon}(u(x)-u(y))^{2}\chi_{\Omega}(y)\,\mathrm{d}y\,\mathrm{d}x.
    \end{align*}
\end{proof}
\begin{theorem}
    Let $\Omega\subset \Rd$ be a bounded domain, $\gamma\in\kernel$ and $\varepsilon<\infty$. Furthermore let $0<C_{1}\leqslant\gamma(x,y)$ hold for a.e.\;$x,y\in\Omega$ with $\|x-y\|\leqslant \varepsilon$. Then the nonlocal Poincar\'e inequality holds.
\end{theorem}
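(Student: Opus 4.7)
The plan is to combine the preceding $\varepsilon$--absorption lemma (\Cref{lem:eps_abs}) with a direct pointwise comparison between the indicator $\chi_{\{\|x-y\|\le\varepsilon\}}$ and the kernel $\gamma$. The hypothesis $\gamma(x,y)\ge C_{1}>0$ on pairs in $\Omega\times\Omega$ at distance $\le\varepsilon$ is exactly what is needed to convert the truncated double integral produced by \Cref{lem:eps_abs} into the desired weighted double integral.

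First, I would invoke \Cref{lem:eps_abs} with the given $\varepsilon$ to obtain a constant $C>0$ such that, for every $u\in\V$,
\begin{equation*}
    \int_{\Omega}\int_{\Omega}(u(x)-u(y))^{2}\,\mathrm{d}y\,\mathrm{d}x
    \leqslant C\int_{\Omega}\int_{\Omega}(u(x)-u(y))^{2}\chi_{\{\|x-y\|<\varepsilon\}}\,\mathrm{d}y\,\mathrm{d}x.
\end{equation*}
Next, I would note that the set $\{(x,y)\in\Omega\times\Omega:\|x-y\|\le\varepsilon\}$ is symmetric in $x$ and $y$, so the hypothesis $\gamma(x,y)\ge C_{1}$ on this set also yields $\gamma(y,x)\ge C_{1}$ on it. Consequently, on $\Omega\times\Omega$ we have the pointwise estimate
\begin{equation*}
    \chi_{\{\|x-y\|<\varepsilon\}}(x,y)\leqslant \frac{1}{C_{1}}\,\gamma(y,x)\quad\text{for a.e.\;}(x,y)\in\Omega\times\Omega.
\end{equation*}

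Substituting this bound into the previous inequality and then enlarging the inner domain from $\Omega$ to $\Rd$, which is allowed because $\gamma\ge 0$, yields
\begin{equation*}
    \int_{\Omega}\int_{\Omega}(u(x)-u(y))^{2}\,\mathrm{d}y\,\mathrm{d}x
    \leqslant \frac{C}{C_{1}}\int_{\Omega}\int_{\Rd}(u(x)-u(y))^{2}\gamma(y,x)\,\mathrm{d}y\,\mathrm{d}x,
\end{equation*}
which is the nonlocal Poincar\'e inequality with Poincar\'e constant $C/C_{1}$. There is no serious obstacle here: all the work has been carried out in \Cref{lem:eps_abs}, and the present theorem is essentially a reformulation of that lemma under a pointwise lower bound on $\gamma$ near the diagonal. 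The only point that requires a small comment is the use of symmetry of the distance to pass from the assumed bound on $\gamma(x,y)$ to the bound on $\gamma(y,x)$ that appears in the statement of the nonlocal Poincar\'e inequality.
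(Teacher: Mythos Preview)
Your proof is correct and follows essentially the same route as the paper, which simply cites \Cref{lem:eps_abs} (together with \Cref{lem:poeq}) as the source of the result. Your explicit observation that the symmetry of the distance set transfers the lower bound from $\gamma(x,y)$ to $\gamma(y,x)$ is a helpful clarification, but otherwise the arguments coincide.
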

\begin{proof}
    Follows as a consequence of \Cref{lem:poeq} and \Cref{lem:eps_abs}.
\end{proof}
\begin{theorem}\label{thm:poeps}
    Let $\Omega\subset\Rd$ be a bounded, nonempty domain, $\varepsilon>0$ and $\gamma\in\kernel$ with
    \begin{align*}
        0<\einf_{x,y\in \Omega, \|x-y\|<\varepsilon}\int_{\Rd}\min\{\gamma(z,x),\gamma(z,y)\}\,\mathrm{d}z,
    \end{align*}
    then the nonlocal Poincar\'e inequality holds.
\end{theorem}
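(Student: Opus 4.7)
The plan is to reduce to short-range pairs via \Cref{lem:eps_abs} and then exploit the positive-essinf hypothesis to control the short-range difference $(u(x)-u(y))^2$ by inserting an intermediate point $z$.

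First, I invoke \Cref{lem:eps_abs}: there exists $C_{1}>0$ such that for all $u\in\V$
\begin{equation*}
    \int_{\Omega}\int_{\Omega}(u(x)-u(y))^{2}\,\mathrm{d}y\,\mathrm{d}x \leqslant C_{1}\int_{\Omega}\int_{\Omega\cap \mathrm{B}_{\varepsilon}(x)}(u(x)-u(y))^{2}\,\mathrm{d}y\,\mathrm{d}x.
\end{equation*}
Hence it suffices to bound the right-hand side by a multiple of the Dirichlet-type form $\int_{\Omega}\int_{\Rd}(u(x)-u(y))^{2}\gamma(y,x)\,\mathrm{d}y\,\mathrm{d}x$.

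Set $c:=\einf_{x,y\in\Omega,\,\|x-y\|<\varepsilon}\int_{\Rd}\min\{\gamma(z,x),\gamma(z,y)\}\,\mathrm{d}z>0$ by hypothesis. For a.e.\ $x,y\in\Omega$ with $\|x-y\|<\varepsilon$, multiplying $(u(x)-u(y))^2$ by the nonnegative integrand $\min\{\gamma(z,x),\gamma(z,y)\}$ and integrating in $z$, followed by the elementary inequality $(a+b)^2\leqslant 2a^2+2b^2$ with $a=u(x)-u(z),b=u(z)-u(y)$ and the bounds $\min\{\gamma(z,x),\gamma(z,y)\}\leqslant \gamma(z,x)$ and $\min\{\gamma(z,x),\gamma(z,y)\}\leqslant \gamma(z,y)$, gives
\begin{equation*}
    c\,(u(x)-u(y))^{2} \leqslant 2\int_{\Rd}(u(x)-u(z))^{2}\gamma(z,x)\,\mathrm{d}z + 2\int_{\Rd}(u(z)-u(y))^{2}\gamma(z,y)\,\mathrm{d}z.
\end{equation*}

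Now integrate over the symmetric set $\{(x,y)\in\Omega\times\Omega:\|x-y\|<\varepsilon\}$. For the first term on the right, integrating in $y$ first yields a factor $\lambda(\Omega\cap \mathrm{B}_{\varepsilon}(x))\leqslant \lambda(\mathrm{B}_{\varepsilon}(0))$; for the second term, swapping the roles of $x$ and $y$ (using symmetry of the region of integration) gives the analogous estimate. Therefore
\begin{equation*}
    c\int_{\Omega}\int_{\Omega\cap \mathrm{B}_{\varepsilon}(x)}(u(x)-u(y))^{2}\,\mathrm{d}y\,\mathrm{d}x \leqslant 4\lambda(\mathrm{B}_{\varepsilon}(0))\int_{\Omega}\int_{\Rd}(u(x)-u(z))^{2}\gamma(z,x)\,\mathrm{d}z\,\mathrm{d}x.
\end{equation*}
Combining with the estimate from \Cref{lem:eps_abs} yields the nonlocal Poincar\'e inequality with constant $C=\tfrac{4 C_{1} \lambda(\mathrm{B}_{\varepsilon}(0))}{c}$.

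The only delicate point is the swap of variables in the second term: one must verify (using Fubini on the symmetric set and the fact that $\gamma(z,y)(u(z)-u(y))^2$ does not depend on $x$) that the $x$-integral produces the harmless factor $\lambda(\Omega\cap \mathrm{B}_{\varepsilon}(y))$. Everything else is a direct combination of the hypothesis with \Cref{lem:eps_abs}.
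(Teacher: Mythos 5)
Your proof is correct and follows the same core strategy as the paper: reduce to short-range pairs via Lemma~\ref{lem:eps_abs}, then dominate $(u(x)-u(y))^2$ by inserting an intermediate point $z$ weighted by $\min\{\gamma(z,x),\gamma(z,y)\}$, splitting $u(x)-u(y)=(u(x)-u(z))+(u(z)-u(y))$ and bounding the minimum by each of its arguments. The only differences are cosmetic: the paper first restricts the $z$-integral to a bounded measurable set $A$ before passing to $\Rd$, whereas you integrate over $\Rd$ directly (which suffices, since only the lower bound $c\leqslant\int_{\Rd}\min\{\gamma(z,x),\gamma(z,y)\}\,\mathrm{d}z$ is ever used) and you retain the indicator $\chi_{\{\|x-y\|<\varepsilon\}}$ throughout, yielding the marginally sharper factor $\lambda(\mathrm{B}_\varepsilon(0))$ in place of the paper's $\lambda(\Omega)$.
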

\begin{proof}
    Choose a constant $C>0$ such that there is a bounded measurable set 
    \begin{align*}
        & A\subset\{y\in\Rd\colon\min\{\gamma(z,x),\gamma(z,y)\}<C\text{ for a.e.\;}x,y\in\Omega\text{ with }\|x-y\|<\varepsilon\}\\
        &\text{with }0<c:=\einf_{x,y\in \Omega, \|x-y\|<\varepsilon}\int_{A}\min\{\gamma(z,x),\gamma(z,y)\}\,\mathrm{d}z.
    \end{align*}
    By \Cref{lem:eps_abs} there is a $C_{1}>0$ with
    \begin{equation*}
       \int_{\Omega}\int_{\Omega}(u(x)-u(y))^{2}\,\mathrm{d}y\,\mathrm{d}x\leqslant C_{1}\int_{\Omega}\int_{\Omega}(u(x)-u(y))^{2}\chi_{\{\|x-y\|<\varepsilon\}}\,\mathrm{d}y\,\mathrm{d}x\text{ for }u\in\V.
     \end{equation*}
    Hence for $u\in\V$ Jensen's inequality yields
     \begin{align*}
        &C_{1}\int_{\Omega}\int_{\Omega}(u(x)-u(y))^{2}\chi_{\{\|x-y\|<\varepsilon\}}\,\mathrm{d}y\,\mathrm{d}x\\
       \leqslant &\frac{C_{1}}{c}\int_{\Omega}\int_{\Omega}\int_{A}\min\{\gamma(z,x),\gamma(z,y)\}(u(x)-u(z)+u(z)-u(y))^{2}\,\mathrm{d}z\,\mathrm{d}y\,\mathrm{d}x\\
       \leqslant &\frac{2C_{1}\lambda(\Omega)}{c}\int_{\Omega}\int_{\Rd}(u(x)-u(z))^{2}\gamma(z,x)\,\mathrm{d}z\,\mathrm{d}x.
    \end{align*}
\end{proof}
\begin{corollary}
    Let $\Omega\subset\Rd$ be a bounded, nonempty domain and let $\gamma\in\kernel$ satisfy 
    \begin{equation*}
        0<\gamma_{0}<\gamma(y,x)\quad\text{ for all }\quad 0<r_{0}<\|x-y\|<r_{1}<\infty,
    \end{equation*}
    then the nonlocal Poincar\'e inequality holds.
\end{corollary}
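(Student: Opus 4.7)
The plan is to apply \Cref{thm:poeps} directly. I only need to exhibit some $\varepsilon>0$ together with a bounded measurable ``witness'' set $A\subset\Rd$ (or an $x,y$--dependent version of it) such that, uniformly in $x,y\in\Omega$ with $\|x-y\|<\varepsilon$, the $z$--integral of $\min\{\gamma(z,x),\gamma(z,y)\}$ admits a strictly positive lower bound.

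To produce such a witness, I would thicken the annulus $\{r_{0}<\|z-x\|<r_{1}\}$ inward on both sides. Concretely, set
\begin{equation*}
    \varepsilon:=\tfrac{1}{4}(r_{1}-r_{0}),\qquad a:=r_{0}+\varepsilon,\qquad b:=r_{1}-\varepsilon,
\end{equation*}
and, for $x,y\in\Omega$ with $\|x-y\|<\varepsilon$, consider the annulus
\begin{equation*}
    A_{x,y}:=\{z\in\Rd\colon a<\|z-x\|<b\}.
\end{equation*}
By the triangle inequality, any $z\in A_{x,y}$ satisfies $r_{0}=a-\varepsilon<\|z-y\|<b+\varepsilon=r_{1}$, so simultaneously $r_{0}<\|z-x\|<r_{1}$ and $r_{0}<\|z-y\|<r_{1}$. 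The hypothesis on $\gamma$ then yields $\min\{\gamma(z,x),\gamma(z,y)\}>\gamma_{0}$ for a.e.\ $z\in A_{x,y}$. Since $A_{x,y}$ is a translated spherical annulus, its Lebesgue measure is a positive constant $c_{d}(a,b)>0$ independent of $x,y$, hence
\begin{equation*}
    \int_{\Rd}\min\{\gamma(z,x),\gamma(z,y)\}\,\mathrm{d}z\;\geqslant\;\gamma_{0}\,\lambda(A_{x,y})\;=\;\gamma_{0}\,c_{d}(a,b)\;>\;0
\end{equation*}
for a.e.\ $x,y\in\Omega$ with $\|x-y\|<\varepsilon$. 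This verifies the essential-infimum condition in \Cref{thm:poeps}, which immediately gives the nonlocal Poincar\'e inequality.

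There is essentially no technical obstacle here; the only real choice is the separation of $a$ and $b$ from $r_{0}$ and $r_{1}$ by a common buffer $\varepsilon$, which must be small enough that two centres at distance $<\varepsilon$ still produce overlapping admissible annuli. The factor of $\tfrac14$ is just a convenient choice; any $\varepsilon<\tfrac12(r_{1}-r_{0})$ works after a matching adjustment of $a,b$.
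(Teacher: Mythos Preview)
Your proof is correct and follows essentially the same route as the paper: both arguments reduce the corollary to \Cref{thm:poeps} by showing that two nearby centres $x,y$ share a common annular region where the kernel is bounded below by $\gamma_{0}$. The paper phrases this via the auxiliary kernel $\widetilde{\gamma}(z,x):=\gamma_{0}\chi_{\{r_{0}<\|z-x\|<r_{1}\}}$ and then compares $\widetilde{\gamma}\leqslant\gamma$, whereas you apply \Cref{thm:poeps} directly to $\gamma$ with an explicit shrunken annulus and an explicit $\varepsilon$; the content is the same.
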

\begin{proof}
    For $x\in \Omega$ we define $A_{x}=\{y\in \Rd\colon r_{0}<\|x-y\|<r_{1}\}$ and for $y,z\in\Rd$ we set $\widetilde{\gamma}(y,z)=\chi_{A_{z}}(y)\gamma_{0}$. Then we choose $\varepsilon>0$ such that 
    \begin{equation*}
        0<\einf_{x,y\in \Omega, \|x-y\|<\varepsilon}\lambda(A_{x}\cap A_{y})=\einf_{x,y\in \Omega, \|x-y\|<\varepsilon}\int_{\Rd}\min\{\widetilde{\gamma}(z,y),\widetilde{\gamma}(z,x)\}\,\mathrm{d}z,
    \end{equation*}
    holds and by \Cref{thm:poeps} we get 
    \begin{align*}
        \int_{\Omega}\int_{\Omega}(u(x)-u(y))^{2}\,\mathrm{d}y\,\mathrm{d}x&\leqslant C\int_{\Omega}\int_{A_{x}}(u(x)-u(y))^{2}\widetilde{\gamma}(y,x)\,\mathrm{d}y\,\mathrm{d}x\\ 
        &\leqslant C\int_{\Omega}\int_{\Rd}(u(x)-u(y))^{2}\gamma(y,x)\,\mathrm{d}y\,\mathrm{d}x\;\text{for }u\in \V.
    \end{align*}
\end{proof}
We define 
\begin{equation*}
    \Vd:=\{u\in\V \text{ such that }u=0 \text{ on }\Rd\setminus\Omega\},
\end{equation*}
then we have for all $u\in\Vd$ that
\begin{equation}\label{simp}
    \begin{split}
        &\int_{\Omega}\int_{\Rd}(u(x)-u(y))^{2}\gamma(y,x)\,\mathrm{d}y\,\mathrm{d}x\\
        =&\int_{\Omega}u^{2}(x)\left(\int_{\Gamma}\gamma(y,x)\,\mathrm{d}y\right)\,\mathrm{d}x+\int_{\Omega}\int_{\Omega}(u(x)-u(y))^{2}\gamma(y,x)\,\mathrm{d}y\,\mathrm{d}x\\
        <&\infty.
    \end{split}
\end{equation}
We say the nonlocal Friedrich's inequality holds on $\Vd$, if there is a $C>0$ such that 
\begin{equation*}
    \int_{\Omega}u^{2}(x)\,\mathrm{d}x\leqslant C \int_{\Omega}\int_{\Rd}(u(x)-u(y))^{2}\gamma(y,x)\,\mathrm{d}y\,\mathrm{d}x\quad\text{ holds for all }u\in \Vd.
\end{equation*}
By equation \eqref{simp} we see that the Friedrich's inequality holds on $\Vd$, if we have
\begin{align*}
    \einf_{x\in\Omega}\int_{\Gamma}\gamma(y,x)\,\mathrm{d}y>0.
\end{align*}
However this assumption can be relaxed.
\begin{theorem}\label{thm:fried}
    Let $\Omega\subset\Rd$ be a nonempty, open and bounded set and $\gamma\in\kernel$ such that there is a measurable $\widetilde{\Omega}\subset \Omega$ with
    \begin{align*}
        \einf_{x\in\widetilde{\Omega}}\int_{\Gamma}\gamma(y,x)\,\mathrm{d}y>0\quad\text{and}\quad \einf_{(y,x)\in\widetilde{\Omega}\times\Omega}\gamma(y,x)>0.
    \end{align*}
    Then the nonlocal Friedrich's inequality holds. Furthermore if the nonlocal Friedrich's inequality holds, then $\Vd$ is a Hilbert space with respect to 
    \begin{equation*}
        \langle u,v\rangle_{0}= \int_{\Omega}\int_{\Rd}(u(x)-u(y))(v(x)-v(y))\gamma(y,x)\,\mathrm{d}y\,\mathrm{d}x,\quad u,v\in\Vd.
    \end{equation*}
\end{theorem}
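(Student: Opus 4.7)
The statement has two independent parts: first the nonlocal Friedrich inequality under the hypothesis on $\widetilde{\Omega}$, and second the Hilbert-space structure on $\Vd$ once a Friedrich-type estimate is available. My plan is to treat them in order.

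For the first part I would split $\Omega$ into $\widetilde{\Omega}$ and $\Omega\setminus\widetilde{\Omega}$ and argue separately. Abbreviate $c_{1}:=\einf_{x\in\widetilde{\Omega}}\int_{\Gamma}\gamma(y,x)\,\mathrm{d}y>0$ and $c_{2}:=\einf_{(y,x)\in\widetilde{\Omega}\times\Omega}\gamma(y,x)>0$. The easy half is $\widetilde{\Omega}$: because $u\in\Vd$ vanishes a.e.\;on $\Gamma$, one has $u^{2}(x)=(u(x)-u(y))^{2}$ for a.e.\;$y\in\Gamma$, hence $c_{1}u^{2}(x)\leqslant \int_{\Gamma}(u(x)-u(y))^{2}\gamma(y,x)\,\mathrm{d}y$, and integrating in $x$ over $\widetilde{\Omega}$ bounds $\int_{\widetilde{\Omega}}u^{2}$ by $c_{1}^{-1}$ times the right-hand side of the desired inequality. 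For $x\in\Omega\setminus\widetilde{\Omega}$ the hypothesis is only accessible through an auxiliary point $y\in\widetilde{\Omega}$: combining the elementary bound $u^{2}(x)\leqslant 2(u(x)-u(y))^{2}+2u^{2}(y)$ with $\gamma(y,x)\geqslant c_{2}$ (which gives $(u(x)-u(y))^{2}\leqslant c_{2}^{-1}\gamma(y,x)(u(x)-u(y))^{2}$) yields $u^{2}(x)\leqslant 2c_{2}^{-1}\gamma(y,x)(u(x)-u(y))^{2}+2u^{2}(y)$. Averaging $y$ over $\widetilde{\Omega}$ and then integrating $x$ over $\Omega\setminus\widetilde{\Omega}$ bounds $\int_{\Omega\setminus\widetilde{\Omega}}u^{2}$ by a constant multiple of the Friedrich right-hand side plus $\int_{\widetilde{\Omega}}u^{2}$, which is already controlled.

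For the Hilbert-space statement I would first verify that $\Vd$ is a closed subspace of $\V$. Any Cauchy sequence in $\Vd$ converges in $\V$ by \Cref{corollary:hilb}; passing, as in the proof of \Cref{thm:comp}, to a subsequence along which $(u_{n}(x)-u_{n}(y))\sqrt{\gamma(y,x)}$ converges pointwise a.e.\;to $(u(x)-u(y))\sqrt{\gamma(y,x)}$, one reads off $u=0$ a.e.\;on $\Gamma$. Symmetry and bilinearity of $\langle\cdot,\cdot\rangle_{0}$ are manifest, while positive definiteness follows from the Friedrich inequality: if $\langle u,u\rangle_{0}=0$ then $u=0$ a.e.\;on $\Omega$, and together with $u=0$ on $\Gamma$ this gives the zero element of $\Vd$. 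Finally, $\|u\|_{0}^{2}\leqslant \|u\|_{\V}^{2}\leqslant (C+1)\|u\|_{0}^{2}$ by the Friedrich inequality, so $\langle\cdot,\cdot\rangle_{0}$ induces a norm on $\Vd$ equivalent to $\|\cdot\|_{\V}$, and completeness is inherited from $\V$.

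I expect the main obstacle to be the $\Omega\setminus\widetilde{\Omega}$ portion of the Friedrich inequality: neither hypothesis applies to $x$ directly there, and one must route through $\widetilde{\Omega}$. The trick is that the pointwise lower bound $\gamma(y,x)\geqslant c_{2}$ on $\widetilde{\Omega}\times\Omega$ lets one pay the cost of inserting the kernel weight into the elementary inequality $u^{2}(x)\leqslant 2(u(x)-u(y))^{2}+2u^{2}(y)$; everything else reduces to bookkeeping.
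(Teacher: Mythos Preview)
Your proof is correct and follows essentially the same route as the paper: bound $\int_{\widetilde{\Omega}}u^{2}$ via $u\vert_{\Gamma}=0$ and the first hypothesis, then control $\int_{\Omega}u^{2}$ (or $\int_{\Omega\setminus\widetilde{\Omega}}u^{2}$) by inserting an auxiliary point $y\in\widetilde{\Omega}$, using $u^{2}(x)\leqslant 2(u(x)-u(y))^{2}+2u^{2}(y)$ together with $\gamma(y,x)\geqslant c_{2}$ on $\widetilde{\Omega}\times\Omega$; the only cosmetic difference is that the paper integrates the second step over all of $\Omega$ at once rather than over $\Omega\setminus\widetilde{\Omega}$. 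For the Hilbert-space part you are actually more careful than the paper, which records the norm equivalence but does not spell out that $\Vd$ is closed in $\V$; your subsequence argument filling that gap is sound.
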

\begin{proof}
    For the first part let $u\in\Vd$, then we have
    \begin{align*}
        \einf_{x\in\widetilde{\Omega}}\int_{\Gamma}\gamma(y,x)\,\mathrm{d}y\int_{\widetilde{\Omega}}u^{2}(x)\,\mathrm{d}x\leqslant  &\int_{\widetilde{\Omega}}u^{2}(x)\int_{\Gamma}\gamma(y,x)\,\mathrm{d}y\,\mathrm{d}x\\
        \leqslant  &\int_{\Omega}\int_{\Gamma}(u(x)-u(y))^{2}\gamma(y,x)\,\mathrm{d}y\,\mathrm{d}x \\
        \leqslant  &\int_{\Omega}\int_{\Rd}(u(x)-u(y))^{2}\gamma(y,x)\,\mathrm{d}y\,\mathrm{d}x
    \end{align*}
    and
    \begin{align*}
        \lambda(\widetilde{\Omega})\int_{\Omega}u^{2}(x)\,\mathrm{d}x=&\int_{\Omega}\int_{\widetilde{\Omega}}(u(x)-u(y)+u(y))^{2}\,\mathrm{d}y\,\mathrm{d}x\\
        \leqslant &\int_{\Omega}\int_{\widetilde{\Omega}}2(u(x)-u(y))^{2}+2u^{2}(y)\,\mathrm{d}y\,\mathrm{d}x.
    \end{align*}
    Therefore the Friedrich's inequality is satisfied. And if the Friedrich's inequality is satisfied, then there is a $\alpha>0$ and $\beta<\infty$ such that for all $u\in\Vd$ we have
    \begin{align*}
        \alpha\|u\|_{\V}\leqslant\langle u,u\rangle_{0}\leqslant \beta\|u\|_{\V}.
    \end{align*}
    This means that $\langle \cdot,\cdot\rangle_{0}$ defines an inner product on $\Vd$.
\end{proof}

\begin{corollary}
    Let $\Omega\subset\Rd$ be nonempty, open and bounded and $s\in(0,1)$, then the assumption for the  Friedrich's inequality in \Cref{thm:fried} is satisfied for $\gamma_{s}(x,y):=\frac{1}{\|x-y\|^{d+2s}}$, $x,y\in\Rd$.
\end{corollary}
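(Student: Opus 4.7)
The plan is to take $\widetilde{\Omega}=\Omega$ itself and to verify the two essential--infimum conditions of \Cref{thm:fried} directly, exploiting the boundedness of $\Omega$ and the power--law decay of $\gamma_{s}$.

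First I would verify that $\Gamma(\Omega,\gamma_{s})$ essentially equals $\Rd\setminus\Omega$: since $\gamma_{s}(y,x)>0$ for all $y\neq x$, one has $\int_{\Omega}\gamma_{s}(y,x)\,\mathrm{d}x>0$ for every $y\in\Rd\setminus\Omega$. Next, the second condition $\einf_{(y,x)\in\Omega\times\Omega}\gamma_{s}(y,x)>0$ is immediate from the boundedness of $\Omega$, since $\|x-y\|\leqslant\operatorname{diam}(\Omega)<\infty$ a.e. on $\Omega\times\Omega$ yields $\gamma_{s}(y,x)\geqslant\operatorname{diam}(\Omega)^{-(d+2s)}>0$.

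For the first (and more substantial) condition, I would choose $R>0$ with $\Omega\subset\mathrm{B}_{R}(0)$, so that $\Rd\setminus\mathrm{B}_{2R}(0)\subset\Gamma$. For every $x\in\Omega$ and every $y\in\Rd\setminus\mathrm{B}_{2R}(0)$ the triangle inequality gives
\begin{equation*}
\|x-y\|\leqslant\|y\|+\|x\|\leqslant\|y\|+R\leqslant\tfrac{3}{2}\|y\|,
\end{equation*}
hence
\begin{equation*}
\int_{\Gamma}\gamma_{s}(y,x)\,\mathrm{d}y\geqslant\int_{\Rd\setminus\mathrm{B}_{2R}(0)}\frac{\mathrm{d}y}{\|x-y\|^{d+2s}}\geqslant\left(\tfrac{2}{3}\right)^{d+2s}\int_{\Rd\setminus\mathrm{B}_{2R}(0)}\frac{\mathrm{d}y}{\|y\|^{d+2s}}.
\end{equation*}
The right--hand side is a finite positive constant independent of $x$ (evaluated in polar coordinates it equals $(2/3)^{d+2s}\,\omega_{d-1}(2R)^{-2s}/(2s)$), which yields the desired positive essential infimum over $x\in\Omega$.

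I do not expect any genuine obstacle here; the only mildly subtle point is identifying $\Gamma$ with $\Rd\setminus\Omega$ and choosing the far--field annulus $\Rd\setminus\mathrm{B}_{2R}(0)$ so that the triangle inequality produces a lower bound uniform in $x$. Everything else reduces to a one--line computation of a radial integral, after which \Cref{thm:fried} applies to give the nonlocal Friedrich's inequality for the fractional kernel $\gamma_{s}$.
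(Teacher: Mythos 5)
Your proposal is correct. You verify directly that $\widetilde{\Omega}=\Omega$ works in \Cref{thm:fried}: the second hypothesis follows from the boundedness of $\Omega$ via $\gamma_{s}(y,x)\geqslant\operatorname{diam}(\Omega)^{-(d+2s)}$ for a.e.\;$(y,x)\in\Omega\times\Omega$, and the first follows from the far--field estimate $\|x-y\|\leqslant\tfrac32\|y\|$ on $\Omega\times(\Rd\setminus\mathrm{B}_{2R}(0))$ together with the elementary radial integral $\int_{\Rd\setminus\mathrm{B}_{2R}(0)}\|y\|^{-(d+2s)}\,\mathrm{d}y=\omega_{d-1}(2R)^{-2s}/(2s)<\infty$. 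The identification $\Gamma=\Rd\setminus\Omega$ for the strictly positive fractional kernel is also correct.

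The paper, by contrast, proves this corollary simply by pointing to Lemma~A.1 of Savin--Valdinoci \cite{SAVIN20141}, so it delegates the verification to the literature rather than carrying it out. Your argument is a genuinely different route in the sense that it is self--contained and elementary: it exposes explicitly which geometric features of the fractional kernel (uniform positivity on a bounded set, integrable tail at infinity) are responsible for the two essential--infimum hypotheses, and it produces an explicit constant. What the paper's citation buys is brevity and a pointer to a result that is proved in slightly different generality elsewhere; what your direct computation buys is transparency and independence from the external reference, and it makes it clear that the same argument would work for any kernel bounded below by a positive constant on $\Omega\times\Omega$ and bounded below by an integrable radial tail at infinity.
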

\begin{proof}
    See Lemma A.1. in \cite{SAVIN20141}.
\end{proof}

\section{Trace space}\label{sec:tr}

In this section we study the nonlocal trace space (for the local trace space see Leoni \cite[Chapter 18.]{leoni}). In other words we study the ``restriction'' of the elements of $\V$ on $\Gamma$. Like the  elements of the Sobolev space, the elements of $\V$ are in fact equivalence classes, so pointwise evaluations of these elements are in general impractical. However the elements of $\V$ are defined a.e.\;on the nonlocal boundary $\Gamma$.  

Most recently, nonlocal trace spaces were introduced, for the fractional Laplacian kernel by Bersetche and Borthagaray \cite{tracefrac} as well as Dyda and Kassmann \cite{kasstrace}. Furthermore Tian and Du \cite{dutrace} studied a nonlocal Trace space, for regional kernels, by using density arguments. As in \cite{tracefrac}, \cite{von} and \cite{kasstrace}, we will prove the existence of a weighted Lebesgue space $\mathrm{L}^{2}(\Gamma;w)$, for which
\begin{equation*}
    \Tr\colon\V\to\mathrm{L}^{2}(\Gamma;w),\quad v\mapsto v\vert_{\Gamma}
\end{equation*}
is a continuous linear operator, i.e., there is a constant $C>0$ with
\begin{equation*}
    \|\Tr(u)\|_{\mathrm{L}^{2}(\Gamma;w)}\leqslant C \|u\|_{\V}\quad\text{ for all }u\in\V.
\end{equation*}
However, while the connection between the weighted Lebesgue space which we study and the one introduced in \cite{von} is clear, the connection between the nonlocal Trace space of \cite{tracefrac} and \cite{kasstrace} and the one we study remains an open question.

Also, we will find that the  measurable weight function $w\colon \Gamma\to(0,\infty)$ only depends on $\gamma$ and $\Omega$. Because we have $\Tr(u)=0$ for all $u\in\Vd$, we see that $\Tr$ is injective if and only if $\Vd=\{0\}$. As in \cite{tracesur} we present a characterization of the trace space for some example kernels. 

Note that by using Fubini's Theorem we see
\begin{align*}
    \int_{\Omega}\int_{\Gamma}\gamma(y,x)\,\mathrm{d}y\,\mathrm{d}x=\int_{\Gamma}\int_{\Omega}\gamma(y,x)\,\mathrm{d}y\,\mathrm{d}x.
\end{align*}
Therefore $\Gamma$ is a null set, if and only if the set $\{x\in\Omega \colon\int_{\Gamma}\gamma(y,x)\,\mathrm{d}y>0\}$ is a null set. Furthermore, if
\begin{equation*}
    \Omega=\{x\in\Omega\colon\int_{\Gamma}\gamma(y,x)\,\mathrm{d}y=\infty\}
\end{equation*}
holds, then we get $\Vd=\{0\}$ by \cref{simp}. 
\begin{theorem}\label{thm:Tr1}
    Let $\Omega\subset \Rd$ be a bounded, nonempty, open and $\gamma\in\kernel$. For a.e.\;$x\in \Omega$ we assume 
    \begin{align*}
        \int_{\Gamma}\gamma(z,x)\,\mathrm{d}z<\infty. 
    \end{align*}
    Furthermore let $c\in[0,\infty)$ satisfy $\einf\limits_{x\in\Omega}\int_{\Gamma}\gamma(y,x)\,\mathrm{d}y+c>0$ and define the function $w\colon \Gamma \to (0,\infty]$ by
    \begin{equation*}
        w(y)=\int_{\Omega}\dfrac{\gamma(y,x)}{\int_{\Gamma}\gamma(z,x)\,\mathrm{d}z+c}\,\mathrm{d}x.
    \end{equation*}
    Then $\{y\in\Gamma\colon\,w(y)=\infty\}$ is a null set and
    \begin{equation*}
       \Tr\colon\V\to\mathrm{L}^{2}(\Gamma;w),\, T(v)=v\vert_{\Gamma}
    \end{equation*}
    \text{is a continuous linear operator.}
\end{theorem}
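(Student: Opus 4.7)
The plan is to obtain both claims from a single Fubini-type estimate built on the elementary inequality $v(y)^2 \leqslant 2(v(y)-v(x))^2 + 2v(x)^2$.

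\emph{Step 1: $w$ is finite a.e.\;and even integrable on $\Gamma$.} I would apply Tonelli's theorem to swap the order of integration and compute
\begin{equation*}
    \int_{\Gamma} w(y)\,\mathrm{d}y = \int_{\Omega} \frac{\int_{\Gamma}\gamma(y,x)\,\mathrm{d}y}{\int_{\Gamma}\gamma(z,x)\,\mathrm{d}z + c}\,\mathrm{d}x \leqslant \lambda(\Omega) < \infty,
\end{equation*}
since the integrand is bounded by $1$. Hence $w \in \mathrm{L}^{1}(\Gamma)$ and $\{y \in \Gamma : w(y) = \infty\}$ is a null set. This also shows that $\Tr$ is well-defined on equivalence classes, because any two representatives of $[v] \in \V$ agree a.e.\;on $\Omega \cup \Gamma$, hence a.e.\;on $\Gamma$.

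\emph{Step 2: the key pointwise bound.} For $y \in \Gamma$, multiplying $v(y)^{2} \leqslant 2(v(y)-v(x))^{2} + 2v(x)^{2}$ by $\gamma(y,x)/(\int_{\Gamma}\gamma(z,x)\,\mathrm{d}z + c)$ and integrating over $x \in \Omega$ yields
\begin{equation*}
    v(y)^{2}w(y) \leqslant 2\int_{\Omega}\frac{(v(y)-v(x))^{2}\gamma(y,x)}{\int_{\Gamma}\gamma(z,x)\,\mathrm{d}z + c}\,\mathrm{d}x + 2\int_{\Omega}\frac{v(x)^{2}\gamma(y,x)}{\int_{\Gamma}\gamma(z,x)\,\mathrm{d}z + c}\,\mathrm{d}x.
\end{equation*}
By hypothesis, there exists $M > 0$ with $\int_{\Gamma}\gamma(z,x)\,\mathrm{d}z + c \geqslant M$ for a.e.\;$x \in \Omega$, which bounds the first denominator from below. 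For the second term I would keep the denominator as is.

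\emph{Step 3: integrate in $y$ and close.} Integrating the previous bound over $\Gamma$ and using Tonelli yields
\begin{equation*}
    \int_{\Gamma}v(y)^{2}w(y)\,\mathrm{d}y \leqslant \frac{2}{M}\int_{\Omega}\int_{\mathbb{R}^{d}}(v(x)-v(y))^{2}\gamma(y,x)\,\mathrm{d}y\,\mathrm{d}x + 2\int_{\Omega}v(x)^{2}\,\mathrm{d}x,
\end{equation*}
where in the second piece I used $\int_{\Gamma}\gamma(y,x)\,\mathrm{d}y \leqslant \int_{\Gamma}\gamma(z,x)\,\mathrm{d}z + c$ to cancel the fraction by $1$. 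Both right-hand terms are controlled by $\|v\|_{\V}^{2}$, so
\begin{equation*}
    \|\Tr(v)\|_{\mathrm{L}^{2}(\Gamma;w)}^{2} \leqslant \bigl(2 + \tfrac{2}{M}\bigr)\,\|v\|_{\V}^{2},
\end{equation*}
which gives continuity. Linearity is immediate from the definition.

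I expect the only real obstacle is bookkeeping: verifying that all applications of Tonelli are legitimate (all integrands are nonnegative, so this is immediate) and making sure the lower bound $M > 0$ from the hypothesis $\einf_{x\in\Omega}\int_{\Gamma}\gamma(y,x)\,\mathrm{d}y + c > 0$ is used correctly in the first term while the natural upper bound (by $1$) is used in the second. No analytic difficulty lies beyond this split-and-Fubini argument.
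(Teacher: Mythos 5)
Your proposal is correct and follows essentially the same route as the paper's proof: show $w\in\mathrm{L}^{1}(\Gamma)$ by Tonelli, rewrite $u(y)=(u(y)-u(x))+u(x)$, apply $(a+b)^{2}\leqslant 2a^{2}+2b^{2}$, and split after Fubini using the essential infimum for the difference term and the trivial bound by $1$ for the $u(x)^{2}$ term. The only cosmetic difference is your final constant $2+2/M$ versus the paper's $2\max\{1,1/M\}$, which is immaterial.
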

\begin{proof}
    By definition we have $w(y)>0$ and because $\int_{\Gamma}w(y)\,\mathrm{d}y\leqslant\lambda(\Omega)$ holds, $w$ is finite almost everywhere. For $u\in\V$, the Jensen's inequality and Fubini's Theorem yield
    \begin{align*}
        \|\Tr(u)\|_{\mathrm{L}^{2}(\Gamma;w)}^{2}=&\int_{\Gamma}u^{2}(y)w(y)\,\mathrm{d}y\\
        =&\int_{\Gamma}\int_{\Omega}(u(x)-u(y)-u(x))^{2}\dfrac{\gamma(y,x)}{\int_{\Gamma}\gamma(z,x)\,\mathrm{d}z+c}\,\mathrm{d}x\,\mathrm{d}y\\
        \leqslant &2 \int_{\Omega}u^{2}(x)\,\mathrm{d}x+2\int_{\Omega}\int_{\Rd}(u(x)-u(y))^{2}\dfrac{\gamma(y,x)}{\int_{\Gamma}\gamma(z,x)\,\mathrm{d}z+c}\,\mathrm{d}y\,\mathrm{d}x\\
        \leqslant &2\max\left\{\dfrac{1}{\einf\limits_{x\in\Omega}\int_{\Gamma}\gamma(y,x)\,\mathrm{d}y+c},1\right\}\|u\|_{\V}^{2}.
    \end{align*}
\end{proof}

Now we want to characterize the Trace space. By taking a closer look at the proof of \cref{thm:Tr1} we see:
\begin{theorem}\label{thm:Tr2}
    Let $\Omega\subset \Rd$ be a bounded, nonempty, open and $\gamma\in\kernel$ with $\esup_{x\in\Omega}\int_{\Gamma}\gamma(y,x)\mathrm{d}y<\infty$. Define the function $w\colon \Gamma \to (0,\infty]$ by
    \begin{equation*}
        w(y)=\int_{\Omega}\gamma(y,x)\,\mathrm{d}x,
    \end{equation*}
    then $\{y\in\Gamma\colon\,w(y)=\infty\}$ is a null set,
    \begin{equation*}
        \Tr\colon\V\to\mathrm{L}^{2}(\Gamma;w), \; T(v)=v\vert_{\Gamma} \;\text{is a bounded, linear and surjective operator},
    \end{equation*}
    and
    \begin{equation*}
        \operatorname{Ext}\colon \mathrm{L}^{2}(\Gamma;w)\to\V,\; D(c)=c\chi_{\Gamma}\;\text{is a bounded, injective and linear operator}.
    \end{equation*}
    Furthermore for all $c\in\mathrm{L}^{2}(\Gamma;w)$ we have $\Tr(\operatorname{Ext}(c))=c$.
\end{theorem}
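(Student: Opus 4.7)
The plan is to mirror the proof of \Cref{thm:Tr1} almost verbatim, the only structural difference being that the stronger hypothesis $M := \lVert\int_\Gamma \gamma(y,\cdot)\,\mathrm{d}y\rVert_{\mathrm{L}^\infty(\Omega)} < \infty$ removes the need for the normalising denominator appearing there, at the price of a factor of $M$ in the trace estimate. I would treat the four claims in order: $w$ is finite a.e., $\operatorname{Tr}$ is bounded, $\operatorname{Ext}$ is bounded and injective, and $\operatorname{Tr}\circ\operatorname{Ext} = \mathrm{id}$; the surjectivity of $\operatorname{Tr}$ then falls out of the existence of its right inverse.

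First, Fubini applied to $\int_\Gamma w\,\mathrm{d}y$ yields $\int_\Gamma w(y)\,\mathrm{d}y \leqslant M\lambda(\Omega) < \infty$, so $w \in \mathrm{L}^1(\Gamma)$ is a.e.\ finite, while $w > 0$ on $\Gamma$ holds by definition of $\Gamma$. For the boundedness of $\operatorname{Tr}$, I would rewrite $\|\operatorname{Tr}(u)\|^2_{\mathrm{L}^2(\Gamma;w)}$ as the double integral $\int_\Omega\!\int_\Gamma u(y)^2\gamma(y,x)\,\mathrm{d}y\,\mathrm{d}x$, use the splitting $u(y)^2 \leqslant 2u(x)^2 + 2(u(x)-u(y))^2$ exactly as in the proof of \Cref{thm:Tr1}, and bound the first term by $2M\|u\|_{\mathrm{L}^2(\Omega)}^2$ and the second by $2\|u\|_\V^2$, whence $\|\operatorname{Tr}(u)\|_{\mathrm{L}^2(\Gamma;w)}^2 \leqslant 2(M+1)\|u\|_\V^2$.

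For $\operatorname{Ext}(c) = c\chi_\Gamma$, note that $\Gamma \subset \Rd\setminus\Omega$, so this function vanishes on $\Omega$ and contributes nothing to the $\mathrm{L}^2(\Omega)$-part of $\|\operatorname{Ext}(c)\|_\V^2$. In the double integral over $\Omega\times\Rd$, the integrand $(\operatorname{Ext}(c)(x)-\operatorname{Ext}(c)(y))^2\gamma(y,x)$ equals $c(y)^2\gamma(y,x)$ on $\Omega\times\Gamma$; for $y\in\Omega$ both values of $\operatorname{Ext}(c)$ vanish, and for $y\in\Rd\setminus(\Omega\cup\Gamma)$ the kernel $\gamma(y,\cdot)$ vanishes a.e.\ on $\Omega$ by the very definition of $\Gamma$. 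Fubini then gives $\|\operatorname{Ext}(c)\|_\V^2 = \int_\Gamma c(y)^2 w(y)\,\mathrm{d}y = \|c\|^2_{\mathrm{L}^2(\Gamma;w)}$, so $\operatorname{Ext}$ is isometric, hence bounded and injective. The identity $\operatorname{Tr}(\operatorname{Ext}(c)) = c$ is immediate from the definitions, which also delivers surjectivity of $\operatorname{Tr}$. The only place that demands any care is the reduction of the Ext-seminorm to the strip $\Omega\times\Gamma$, where one must invoke the definition of $\Gamma$ to discard the contribution from $\Rd\setminus(\Omega\cup\Gamma)$; everything else is Fubini together with the elementary inequality $(a+b)^2 \leqslant 2a^2+2b^2$.
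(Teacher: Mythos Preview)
Your proposal is correct and follows essentially the same route as the paper: both arguments use Fubini to place $w$ in $\mathrm{L}^{1}(\Gamma)$, the splitting $u(y)^{2}\leqslant 2u(x)^{2}+2(u(x)-u(y))^{2}$ for the trace bound, and the direct computation $\|\operatorname{Ext}(c)\|_{\V}^{2}=\|c\|_{\mathrm{L}^{2}(\Gamma;w)}^{2}$ for the extension. Your write-up is in fact slightly more explicit than the paper's in justifying why the $\operatorname{Ext}$-seminorm localises to $\Omega\times\Gamma$ and in spelling out that $\operatorname{Tr}\circ\operatorname{Ext}=\mathrm{id}$ yields surjectivity.
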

\begin{proof}
    Because $w\in\mathrm{L}^{1}(\Gamma)$, we see that $\{y\in\Gamma\colon\,w(y)=\infty\}$ is a null set. For $u\in\V$, Fubini's Theorem and the Jensen's inequality yield
    \begin{align*}
        &\|\Tr(u)\|_{\mathrm{L}^{2}(\Gamma;w)}^{2}\\
        =&\int_{\Gamma}u^{2}(y)w(y)\,\mathrm{d}y\\
        =&\int_{\Omega}\int_{\Gamma}(u(x)-u(y)-u(x))^{2}\gamma(y,x)\,\mathrm{d}y\,\mathrm{d}x\\
        \leqslant &2\esup_{x\in\Omega}\int_{\Gamma}\gamma(y,x)\,\mathrm{d}y \int_{\Omega}u^{2}(x)\,\mathrm{d}x+2\int_{\Omega}\int_{\Rd}(u(x)-u(y))^{2}\gamma(y,x)\,\mathrm{d}y\,\mathrm{d}x\\
        \leqslant &2\max\{\esup_{x\in\Omega}\int_{\Gamma}\gamma(y,x)\,\mathrm{d}y,1\}\|u\|_{\V}^{2}.
    \end{align*}
    Let $c\in\mathrm{L}^{2}(\Gamma;w)$, then we have 
    \begin{align*}
        \|\operatorname{Ext}(c)\|_{\V}^{2}= \int_{\Omega}\int_{\Rd}(c(y)\chi_{\Gamma}(y))^{2}\gamma(y,x)\,\mathrm{d}y\,\mathrm{d}x=\|c\|_{\mathrm{L}^{2}(\Gamma;w)}.
    \end{align*}
\end{proof}
\begin{corollary}
    Let the assumptions of either \Cref{thm:Tr1} or \Cref{thm:Tr2} be satisfied and set $w\colon\Gamma\to(0,\infty]$ accordingly. Then a measurable function $g\colon\Gamma\to\mathbb{R}$ satisfies the continuous functional condition if $\frac{g}{\sqrt{w}}\in\mathrm{L}^{2}(\Gamma)$ holds, in other words if we have 
    \begin{equation*}
        \left\|\frac{g}{\sqrt{w}}\right\|_{\mathrm{L}^{2}(\Gamma)}^{2}=\int_{\Gamma}\frac{g^{2}(y)}{w(y)}\mathrm{d}y<\infty.
    \end{equation*}
    Moreover, if we assume
	\begin{align*}
	    \einf_{y\in\Gamma}w(y)>0,
    \end{align*}
    then every $g\in\mathrm{L}^{2}(\Gamma)$ satisfies the continuous functional condition. In particular, if we have
    \begin{equation*}
        0<\einf_{y\in\Gamma}w(y)\leqslant\esup_{y\in\Gamma}w(y)<\infty,
    \end{equation*}
    then we obtain $u\vert_{\Omega\cup \Gamma}\in \mathrm{L}^{2}(\Omega\cup \Gamma)$ for all $u\in \V$.
\end{corollary}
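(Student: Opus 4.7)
The plan is to reduce the corollary to the continuity of the trace operator established in Theorems~\ref{thm:Tr1}/\ref{thm:Tr2} combined with a single application of the Cauchy--Schwarz inequality with the weight $w$. For the first claim I would write, for any $v\in\V$,
\begin{equation*}
    \left\lvert\int_{\Gamma}g(y)v(y)\,\mathrm{d}y\right\rvert
    =\left\lvert\int_{\Gamma}\frac{g(y)}{\sqrt{w(y)}}\,v(y)\sqrt{w(y)}\,\mathrm{d}y\right\rvert
    \leqslant \left\|\frac{g}{\sqrt{w}}\right\|_{\mathrm{L}^{2}(\Gamma)}\|\Tr(v)\|_{\mathrm{L}^{2}(\Gamma;w)},
\end{equation*}
and then invoke the boundedness of $\Tr\colon\V\to\mathrm{L}^{2}(\Gamma;w)$ (valid under the hypotheses of either \Cref{thm:Tr1} or \Cref{thm:Tr2}) to bound the right-hand side by $C\|g/\sqrt{w}\|_{\mathrm{L}^{2}(\Gamma)}\|v\|_{\V}$. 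Linearity is immediate from the linearity of the integral, so the functional $v\mapsto\int_{\Gamma}gv\,\mathrm{d}y$ is continuous on $\V$.

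For the second claim, I would simply observe that if $c:=\einf_{y\in\Gamma}w(y)>0$, then for every $g\in\mathrm{L}^{2}(\Gamma)$ one has
\begin{equation*}
    \int_{\Gamma}\frac{g^{2}(y)}{w(y)}\,\mathrm{d}y\leqslant \frac{1}{c}\int_{\Gamma}g^{2}(y)\,\mathrm{d}y<\infty,
\end{equation*}
so that the hypothesis $g/\sqrt{w}\in\mathrm{L}^{2}(\Gamma)$ of the first claim is satisfied automatically.

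For the last statement, under the two-sided bound on $w$ the weighted space $\mathrm{L}^{2}(\Gamma;w)$ coincides (with equivalent norms) with $\mathrm{L}^{2}(\Gamma)$. Since $\Tr(u)\in\mathrm{L}^{2}(\Gamma;w)$ for every $u\in\V$ by \Cref{thm:Tr1}/\Cref{thm:Tr2}, this gives $u\vert_{\Gamma}\in\mathrm{L}^{2}(\Gamma)$, and by definition of $\V$ we also have $u\vert_{\Omega}\in\mathrm{L}^{2}(\Omega)$; combining the two yields $u\vert_{\Omega\cup\Gamma}\in\mathrm{L}^{2}(\Omega\cup\Gamma)$. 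No step is really an obstacle here -- the only thing to check carefully is that the Cauchy--Schwarz step is legitimate, i.e.\ that $\Tr(v)$ is the correct almost-everywhere representative of $v\vert_{\Gamma}$ inside the weighted integral, which is precisely what the trace theorems provide.
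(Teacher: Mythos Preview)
Your proof is correct and follows essentially the same approach as the paper: a single Cauchy--Schwarz (H\"older) inequality with the weight $w$, followed by the boundedness of $\Tr\colon\V\to\mathrm{L}^{2}(\Gamma;w)$ from \Cref{thm:Tr1}/\Cref{thm:Tr2}, with the remaining claims read off directly from the bounds on $w$. If anything, you spell out the last two claims in slightly more detail than the paper does.
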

\begin{proof}
    By the Hölder inequality we have, for all $ u\in\V$,
    \begin{equation*}
        \int_{\Gamma}g(y)u(y)\mathrm{d}y\leqslant\left\|\frac{g}{\sqrt{w}}\right\|_{\mathrm{L}^{2}(\Gamma)}\|\Tr(u)\|_{\mathrm{L}^{2}(\Gamma;w)}\leqslant\left\|\frac{g}{\sqrt{w}}\right\|_{\mathrm{L}^{2}(\Gamma)}\|u\|_{\V}.
    \end{equation*}
    The rest is a direct consequence of the bounds of $w$.
\end{proof}
Recalling the characterization of the local trace space (see \cite[Theorem 18.40]{leoni}) we obtain:
\begin{theorem}\label{thm:trsur}
    Let $\Omega\subset \Rd$ be a bounded, nonempty, open subset and $\gamma\in\kernel$ such that for a.e.\;$x\in \Omega$ we have 
    \begin{equation*}
        \int_{\Gamma}\gamma(z,x)\,\mathrm{d}z<\infty. 
    \end{equation*}
    Furthermore let $c\in[0,\infty)$ satisfy $\einf\limits_{x\in\Omega}\int_{\Gamma}\gamma(y,x)\,\mathrm{d}y+c>0$ and define the function $w\colon \Gamma \to (0,\infty]$ by
    \begin{equation*}
        w(y)=\int_{\Omega}\dfrac{\gamma(y,x)}{\int_{\Gamma}\gamma(z,x)\,\mathrm{d}z+c}\,\mathrm{d}x
    \end{equation*}
    and set $\mathrm{W}(\Gamma;\gamma):=\{u:\Gamma\to \mathbb{R}\quad\text{measurable with }\|u\|_{\mathrm{W}(\Gamma;\gamma)}<\infty\}$, where
    \begin{align*}
        \|u\|_{\mathrm{W}(\Gamma;\gamma)}^{2}:=\int_{\Gamma}u^{2}(y)w(y)\,\mathrm{d}y+\int_{\Gamma}\int_{\Gamma}(u(y)-u(z))^{2}\int_{\Omega}\dfrac{\gamma(y,x)\gamma(z,x)}{\int_{\Gamma}\gamma(s,x)\,\mathrm{d}s+c}\,\mathrm{d}x\,\mathrm{d}y\,\mathrm{d}z.
    \end{align*}
    Then:
    \begin{enumerate}[label=(\roman*)]
        \item $\Tr\colon\V\to\mathrm{W}(\Gamma;\gamma),\, \Tr(v)=v\vert_{\Gamma}$ is a linear operator such that, there is a $ C>0$ with 
        \begin{align*}
            \|\Tr (u)\|_{\mathrm{W}(\Gamma;\gamma)}\leqslant C \|u\|_{\V}\text{ for }u\in\V.
        \end{align*}
        \item $\operatorname{E}\colon\mathrm{W}(\Gamma;\gamma)\to\Ls,$
        \begin{equation*}
            \operatorname{E}(v)=\displaystyle{\int_{\Gamma}v(y)\frac{\gamma(y,\cdot)}{\int_{\Gamma}\gamma(s,\cdot)\,\mathrm{d}s+c}\,\mathrm{d}y}
        \end{equation*}
        is a linear operator such that, there is a $ C>0$ with 
        \begin{equation*}
            \|\operatorname{E}(v)\|_{\Ls}+\int_{\Omega}\int_{\Gamma}(E(v)(x)-v(y))^{2}\gamma(y,x)\,\mathrm{d}y\,\mathrm{d}x\leqslant C \|v\|_{\mathrm{W}(\Gamma;\gamma)}
        \end{equation*}
        for $v\in\mathrm{W}(\Gamma;\gamma)$.
        \item If there is a $ C>0$ with
        \begin{equation*}
           \int_{\Omega}\int_{\Omega}(E(v)(x)-E(v)(y))^{2}\gamma(y,x)\,\mathrm{d}y\,\mathrm{d}x\leqslant C \|v\|_{\mathrm{W}(\Gamma;\gamma)}\quad\text{ for }v\in\mathrm{W}(\Gamma;\gamma),
        \end{equation*}
        then $\Tr$ is surjective.
    \end{enumerate}
\end{theorem}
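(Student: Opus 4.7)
The plan is to tackle the three parts in order, using Fubini's theorem and Cauchy-Schwarz/Jensen repeatedly, and keeping track of the denominator $M(x):=\int_{\Gamma}\gamma(s,x)\,\mathrm{d}s+c$ that appears throughout.

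For part (i), I split $\|\Tr(u)\|_{\mathrm{W}(\Gamma;\gamma)}^{2}$ into the weighted $\mathrm{L}^{2}$ piece and the double-integral piece. The first piece is already handled by the bound proved in \Cref{thm:Tr1}. For the second piece, I insert an arbitrary $x\in\Omega$ and estimate
\begin{equation*}
(u(y)-u(z))^{2}\leqslant 2(u(y)-u(x))^{2}+2(u(x)-u(z))^{2},
\end{equation*}
then apply Fubini to integrate in the order $\mathrm{d}z\,\mathrm{d}y\,\mathrm{d}x$. In the first resulting term the integral in $z$ produces $\int_{\Gamma}\gamma(z,x)\,\mathrm{d}z/M(x)\leqslant 1$, leaving $\int_{\Omega}\int_{\Gamma}(u(x)-u(y))^{2}\gamma(y,x)\,\mathrm{d}y\,\mathrm{d}x\leqslant\|u\|_{\V}^{2}$; the second term is symmetric. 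This yields the constant $C$ depending only on $\gamma,\Omega,c$.

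For part (ii), the $\mathrm{L}^{2}(\Omega)$ bound on $\operatorname{E}(v)$ comes from Cauchy-Schwarz against the subprobability measure $\gamma(y,x)/M(x)\,\mathrm{d}y$ on $\Gamma$:
\begin{equation*}
\operatorname{E}(v)(x)^{2}\leqslant\int_{\Gamma}v^{2}(y)\frac{\gamma(y,x)}{M(x)}\,\mathrm{d}y,
\end{equation*}
and integrating over $\Omega$ gives $\int_{\Gamma}v^{2}(y)w(y)\,\mathrm{d}y\leqslant\|v\|_{\mathrm{W}(\Gamma;\gamma)}^{2}$. For the interaction term, I write
\begin{equation*}
\operatorname{E}(v)(x)-v(y)=\int_{\Gamma}(v(z)-v(y))\frac{\gamma(z,x)}{M(x)}\,\mathrm{d}z-v(y)\frac{c}{M(x)},
\end{equation*}
square, apply Cauchy-Schwarz to the first piece and Fubini across $\Omega\times\Gamma\times\Gamma$. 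The $(v(z)-v(y))^{2}$-piece yields exactly the double integral appearing in $\|v\|_{\mathrm{W}(\Gamma;\gamma)}^{2}$. For the $c$-piece I use $c^{2}/M(x)^{2}\leqslant c/M(x)$ (since $M(x)\geqslant c$), which reduces it to $c\int_{\Gamma}v^{2}(y)w(y)\,\mathrm{d}y$, again bounded by $\|v\|_{\mathrm{W}(\Gamma;\gamma)}^{2}$.

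For part (iii), I construct an explicit extension: set $u(x):=\operatorname{E}(v)(x)$ for $x\in\Omega$, $u(y):=v(y)$ for $y\in\Gamma$, and $u=0$ on $\Rd\setminus(\Omega\cup\Gamma)$. Then $\Tr(u)=v$ by construction, so the only task is to verify $u\in\V$. Splitting
\begin{equation*}
\|u\|_{\V}^{2}=\int_{\Omega}\operatorname{E}(v)^{2}\,\mathrm{d}x+\int_{\Omega}\int_{\Omega}(\operatorname{E}(v)(x)-\operatorname{E}(v)(y))^{2}\gamma(y,x)\,\mathrm{d}y\,\mathrm{d}x+\int_{\Omega}\int_{\Gamma}(\operatorname{E}(v)(x)-v(y))^{2}\gamma(y,x)\,\mathrm{d}y\,\mathrm{d}x,
\end{equation*}
the contribution from $\Rd\setminus(\Omega\cup\Gamma)$ vanishes by the definition of $\Gamma$, the first and third summands are controlled by (ii), and the middle one is precisely what the extra hypothesis of (iii) bounds by $C\|v\|_{\mathrm{W}(\Gamma;\gamma)}$. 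I expect the main obstacle to be the bookkeeping in part (ii): separating the $c/M(x)$ contribution cleanly and ensuring the probability-measure-versus-subprobability-measure distinction is handled correctly in Cauchy-Schwarz, so that the constants depend only on $\Omega,\gamma$ and $c$ and not on $v$.
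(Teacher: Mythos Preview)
Your proposal is correct and follows essentially the same route as the paper: part (i) is handled by invoking \Cref{thm:Tr1} for the weighted $\mathrm{L}^{2}$ piece and then inserting the integration variable $x\in\Omega$ via $(u(y)-u(z))^{2}\leqslant 2(u(y)-u(x))^{2}+2(u(x)-u(z))^{2}$ together with Fubini; part (ii) uses the same Cauchy--Schwarz against the subprobability measure $\gamma(\cdot,x)/M(x)$ and the same splitting of $\operatorname{E}(v)(x)-v(y)$ into the $(v(z)-v(y))$-piece and the $c/M(x)$-piece, with the identical estimate $c^{2}/M(x)^{2}\leqslant c/M(x)$; and part (iii), which the paper leaves implicit, is exactly the extension you spell out.
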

\begin{proof}
    Because $\int_{\Gamma}w(y)\,\mathrm{d}y\leqslant\lambda(\Omega)$ holds, $w$ is finite almost everywhere. Now let $u\in\V$, then we have already shown in \Cref{thm:Tr1} that
    \begin{align*}
        \|\Tr(u)\|_{\mathrm{L}^{2}(\Gamma;w)}^{2}=\int_{\Gamma}u^{2}(y)w(y)\,\mathrm{d}y\leqslant 2\max\left\{\dfrac{1}{\einf\limits_{x\in\Omega}\int_{\Gamma}\gamma(y,x)\,\mathrm{d}y+c},1\right\}\|u\|_{\V}^{2}.
    \end{align*}
    By using Jensen's inequality and Fubini's Theorem we get
    \begin{align*}
        &\int_{\Gamma}\int_{\Gamma}(u(y)-u(z))^{2}\int_{\Omega}\dfrac{\gamma(y,x)\gamma(z,x)}{\int_{\Gamma}\gamma(s,x)\,\mathrm{d}s+c}\,\mathrm{d}x\,\mathrm{d}y\,\mathrm{d}z\\
        =&\int_{\Gamma}\int_{\Gamma}\int_{\Omega}(u(y)-u(x)+u(x)-u(z))^{2}\dfrac{\gamma(y,x)\gamma(z,x)}{\int_{\Gamma}\gamma(s,x)\,\mathrm{d}s+c}\,\mathrm{d}x\,\mathrm{d}y\,\mathrm{d}z\\
        \leqslant&2\int_{\Gamma}\int_{\Omega}(u(y)-u(x))^{2}\dfrac{\gamma(y,x)\int_{\Gamma}\gamma(z,x)\,\mathrm{d}z}{\int_{\Gamma}\gamma(z,x)\,\mathrm{d}z+c}\,\mathrm{d}x\,\mathrm{d}y\\
        &+2\int_{\Gamma}\int_{\Omega}(u(y)-u(x))^{2}\dfrac{\gamma(z,x)\int_{\Gamma}\gamma(y,x)\,\mathrm{d}y}{\int_{\Gamma}\gamma(z,x)\,\mathrm{d}z+c}\,\mathrm{d}x\,\mathrm{d}z\\
        \leqslant &4\|u\|_{\V}.
    \end{align*}
    Now let $v\in\mathrm{W}(\Omega;\gamma)$, then the Hölder inequality yields
    \begin{align*}
        \|\operatorname{E}(v)\|_{\Ls}=&\int_{\Omega}\left(\int_{\Gamma}v(y)\frac{\gamma(y,x)}{\int_{\Gamma}\gamma(z,x)\,\mathrm{d}z+c}\,\mathrm{d}y\right)^{2}\,\mathrm{d}x\\
        \leqslant &\int_{\Gamma}v^{2}(y)\int_{\Omega}\frac{\gamma(y,x)}{\int_{\Gamma}\gamma(z,x)\,\mathrm{d}z+c}\,\mathrm{d}x\,\mathrm{d}y.
    \end{align*}
    and 
    \begin{align*}
        &\int_{\Omega}\int_{\Gamma}\left(E(v)(x)-v(y)\right)^{2}\gamma(y,x)\,\mathrm{d}y\,\mathrm{d}x\\
        =&\int_{\Omega}\int_{\Gamma}\left(\int_{\Gamma}v(s)\frac{\gamma(s,x)}{\int_{\Gamma}\gamma(z,x)\,\mathrm{d}z+c}\,\mathrm{d}s-v(y)\frac{\int_{\Gamma}\gamma(z,x)\,\mathrm{d}z+c}{\int_{\Gamma}\gamma(z,x)\,\mathrm{d}z+c}\right)^{2}\gamma(y,x)\,\mathrm{d}y\,\mathrm{d}x\\
        \leqslant&\int_{\Omega}\int_{\Gamma}2\left(\int_{\Gamma}v(s)-v(y)\frac{\gamma(s,x)}{\int_{\Gamma}\gamma(z,x)\,\mathrm{d}z+c}\,\mathrm{d}s\right)^{2}\gamma(y,x)\,\mathrm{d}y\,\mathrm{d}x\\
        &+\int_{\Omega}\int_{\Gamma}2v^{2}(y)\frac{c^{2}\gamma(y,x)}{(\int_{\Gamma}\gamma(z,x)\,\mathrm{d}z+c)^{2}}\,\mathrm{d}y\,\mathrm{d}x\\
        \leqslant &2\int_{\Gamma}\int_{\Gamma}\left(v(s)-v(y)\right)^{2}\int_{\Omega}\frac{\gamma(s,x)\gamma(y,x)}{\int_{\Gamma}\gamma(z,x)\,\mathrm{d}z+c}\,\mathrm{d}x\,\mathrm{d}s\,\mathrm{d}y\\
        &+2\int_{\Gamma}v^{2}(y)\int_{\Omega}\frac{\gamma(y,x)}{(c^{-1}\int_{\Gamma}\gamma(z,x)\,\mathrm{d}z+1)^{2}}\,\mathrm{d}x\,\mathrm{d}y\\
        \leqslant &2\int_{\Gamma}\int_{\Gamma}\left(v(s)-v(y)\right)^{2}\int_{\Omega}\frac{\gamma(s,x)\gamma(y,x)}{\int_{\Gamma}\gamma(z,x)\,\mathrm{d}z+c}\,\mathrm{d}x\,\mathrm{d}s\,\mathrm{d}y\\
        &+2\int_{\Gamma}v^{2}(y)\int_{\Omega}\frac{c\,\gamma(y,x)}{\int_{\Gamma}\gamma(z,x)\,\mathrm{d}z+c}\,\mathrm{d}x\,\mathrm{d}y.
    \end{align*}
\end{proof}
\begin{corollary}
    Let $\Omega\subset \Rd$ be a bounded, nonempty, open subset and $\gamma\in\kernel$ such that for a.e.\;$x\in \Omega$ we have
    \begin{equation*}
        \int_{\Gamma}\gamma(z,x)\,\mathrm{d}z<\infty
    \end{equation*}
    and such that, there is a $c\in[0,\infty)$ with $\einf_{x\in\Omega} \int_{\Gamma}\gamma(z,x)\,\mathrm{d}z+c>0$,
    \begin{align*}
        \esup_{x\in\Omega}\int_{\Omega}\int_{\Gamma}\frac{(k(s,x)-k(s,y))^{2}}{k(s,x)+k(s,y)}\chi_{\{k(s,x)+k(s,y)>0\}}\,\mathrm{d}s\,\gamma(y,x)\,\mathrm{d}y<\infty\\
        \text{and}\quad \esup_{y\in\Omega}\int_{\Omega}\int_{\Gamma}\frac{(k(s,x)-k(s,y))^{2}}{k(s,x)+k(s,y)}\chi_{\{k(s,x)+k(s,y)>0\}}\,\mathrm{d}s\,\gamma(y,x)\,\mathrm{d}y<\infty,
    \end{align*}
    where $k(s,x)=\dfrac{\gamma(s,x)}{\int_{\Gamma}\gamma(z,x)\,\mathrm{d}z+c}$ for $(s,x)\in\Gamma\times\Omega$. Define the function $w\colon \Gamma \to (0,\infty]$ by
    \begin{equation*}
        w(y):=\int_{\Omega}k(y,x)\,\mathrm{d}x=\int_{\Omega}\dfrac{\gamma(y,x)}{\int_{\Gamma}\gamma(z,x)\,\mathrm{d}z+c}\,\mathrm{d}x
    \end{equation*}
    and set $\mathrm{W}(\Gamma;\gamma):=\{u:\Gamma\to \mathbb{R}\quad\text{measurable with }\|u\|_{\mathrm{W}(\Gamma;\gamma)}<\infty\}$, where
    \begin{align*}
        \|u\|_{\mathrm{W}(\Gamma;\gamma)}^{2}:=\int_{\Gamma}u^{2}(y)w(y)\,\mathrm{d}y+\int_{\Gamma}\int_{\Gamma}(u(y)-u(z))^{2}\int_{\Omega}\dfrac{\gamma(y,x)\gamma(z,x)}{\int_{\Gamma}\gamma(s,x)\,\mathrm{d}s+c}\,\mathrm{d}x\,\mathrm{d}y\,\mathrm{d}z.
    \end{align*}
    Then $\Tr\colon\V\to\mathrm{W}(\Gamma;\gamma),\, \Tr(v)=v\vert_{\Gamma}$ is a bounded, linear and surjective operator.
\end{corollary}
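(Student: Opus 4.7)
The plan is to reduce the surjectivity claim to Theorem~\ref{thm:trsur}(iii); parts (i) and (ii) of that theorem already apply under the base hypotheses of the corollary, so all that remains is to verify the hypothesis of (iii), i.e.\ $\int_{\Omega}\int_{\Omega}(E(v)(x)-E(v)(y))^{2}\gamma(y,x)\,\mathrm{d}y\,\mathrm{d}x\leqslant C\|v\|_{\mathrm{W}(\Gamma;\gamma)}^{2}$ for every $v\in\mathrm{W}(\Gamma;\gamma)$. I would start from the identity $E(v)(x)-E(v)(y)=\int_{\Gamma}v(s)(k(s,x)-k(s,y))\,\mathrm{d}s$ and apply the Cauchy--Schwarz inequality with weight $k(s,x)+k(s,y)$; since $k\geqslant 0$, the integrand automatically vanishes on $\{k(s,x)+k(s,y)=0\}$, so inserting the cut-off $\chi_{\{k(s,x)+k(s,y)>0\}}$ is harmless. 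This yields the pointwise bound $|E(v)(x)-E(v)(y)|^{2}\leqslant F(x,y)\,G(x,y)$, where
\begin{align*}
F(x,y)&:=\int_{\Gamma}\frac{(k(s,x)-k(s,y))^{2}}{k(s,x)+k(s,y)}\chi_{\{k(s,x)+k(s,y)>0\}}\,\mathrm{d}s,\\
G(x,y)&:=\int_{\Gamma}v^{2}(s)(k(s,x)+k(s,y))\,\mathrm{d}s.
\end{align*}

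Next I would multiply by $\gamma(y,x)$, integrate over $\Omega\times\Omega$ and use Fubini to move the $s$-integral outside. Splitting $G(x,y)$ into its two summands according to $k(s,x)$ and $k(s,y)$ produces a term in which $\int_{\Omega}F(x,y)\gamma(y,x)\,\mathrm{d}y$ is bounded uniformly in $x$ by the first essential-supremum hypothesis, and a symmetric term in which $\int_{\Omega}F(x,y)\gamma(y,x)\,\mathrm{d}x$ is bounded uniformly in $y$ by the second. In both cases the remaining $x$- respectively $y$-integral collapses to $w(s)=\int_{\Omega}k(s,x)\,\mathrm{d}x$ directly from the definition of $w$. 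Summing the two contributions yields a bound of the form $(M_{1}+M_{2})\int_{\Gamma}v^{2}(s)w(s)\,\mathrm{d}s\leqslant(M_{1}+M_{2})\|v\|_{\mathrm{W}(\Gamma;\gamma)}^{2}$, which is precisely the hypothesis of Theorem~\ref{thm:trsur}(iii); surjectivity of $\Tr$ follows.

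The single non-routine step is the choice of Cauchy--Schwarz splitting: distributing the weight $k(s,x)+k(s,y)$ between the two factors is exactly what converts the unwieldy $|k(s,x)-k(s,y)|^{2}$ into the integrable ratio $(k(s,x)-k(s,y))^{2}/(k(s,x)+k(s,y))$ that the two essential-supremum hypotheses are tailored to control. Everything downstream of that step is Fubini and direct substitution of the definitions of $w$ and $\|\cdot\|_{\mathrm{W}(\Gamma;\gamma)}$, so no genuine obstacle remains.
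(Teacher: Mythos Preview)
Your proposal is correct and follows essentially the same route as the paper: reduce to Theorem~\ref{thm:trsur}(iii), apply Cauchy--Schwarz in the $s$-integral with the weight $k(s,x)+k(s,y)$ to obtain the pointwise bound $|E(v)(x)-E(v)(y)|^{2}\leqslant F(x,y)G(x,y)$, then split $G$ into its two summands and control each via the corresponding essential-supremum hypothesis, leaving the weighted $\mathrm{L}^{2}(\Gamma;w)$ norm of $v$. The only cosmetic difference is that the paper writes out the Cauchy--Schwarz step explicitly as a H\"older inequality with the factor $\sqrt{(k(s,x)+k(s,y))/(k(s,x)+k(s,y))}$ inserted, which is exactly your weighted splitting.
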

\begin{proof}
    Due to \Cref{thm:trsur}, it remains to show that there is a $ C>0$ with
    \begin{equation*}
       \int_{\Omega}\int_{\Omega}(E(v)(x)-E(v)(y))^{2}\gamma(y,x)\,\mathrm{d}y\,\mathrm{d}x\leqslant C \|v\|_{\mathrm{W}(\Gamma;\gamma)}.
    \end{equation*}
    First we mention that $k$ is nonegative. Hence for a.e.\;$x,y\in\Omega$ and a.e.\;$s\in\Gamma$ we have $k(s,x)+k(s,y)=0$ if and only if $k(s,x)=k(s,y)=0$ holds. Choose $C>0$ with
    \begin{align*}
        \esup_{x\in\Omega}\int_{\Omega}\int_{\Gamma}\frac{(k(s,x)-k(s,y))^{2}}{k(s,x)+k(s,y)}\chi_{\{k(s,x)+k(s,y)>0\}}\,\mathrm{d}s\,\gamma(y,x)\,\mathrm{d}y<\frac{C}{2}\\
        \text{and}\quad \esup_{y\in\Omega}\int_{\Omega}\int_{\Gamma}\frac{(k(s,x)-k(s,y))^{2}}{k(s,x)+k(s,y)}\chi_{\{k(s,x)+k(s,y)>0\}}\,\mathrm{d}s\,\gamma(y,x)\,\mathrm{d}y<\frac{C}{2}.
    \end{align*}
    Let $v\in\mathrm{W}(\Gamma;\gamma)$, then  we get by Hölder's inequality
    \begin{align*}
        &(E(v)(x)-E(v)(y))^{2}\\
        =&\left(\int_{\Gamma}v(s)\left(\frac{\gamma(s,x)}{\int_{\Gamma}\gamma(z,x)\,\mathrm{d}z+c}-\frac{\gamma(s,y)}{\int_{\Gamma}\gamma(z,y)\,\mathrm{d}z+c}\right)\,\mathrm{d}s\right)^{2}\\
        =&\left(\int_{\Gamma}v(s)\sqrt{\frac{k(s,x)+k(s,y)}{k(s,x)+k(s,y)}}(k(s,x)-k(s,y))\chi_{\{k(s,x)+k(s,y)>0\}}\,\mathrm{d}s\right)^{2}\\
        \leqslant&\int_{\Gamma}v^{2}(t)(k(t,x)+k(t,y))\,\mathrm{d}t\int_{\Gamma}\frac{(k(s,x)-k(s,y))^{2}}{k(s,x)+k(s,y)}\chi_{\{k(s,x)+k(s,y)>0\}}\,\mathrm{d}s
    \end{align*}
    for a.e.\;$x,y\in\Omega$. Therefore we conclude 
    \begin{align*}
        &\int_{\Omega}\int_{\Omega}(E(v)(x)-E(v)(y))^{2}\gamma(y,x)\,\mathrm{d}y\,\mathrm{d}x\\
        \leqslant &\int_{\Gamma}v^{2}(t)\int_{\Omega}k(t,x)\int_{\Omega}\int_{\Gamma}\frac{(k(s,x)-k(s,y))^{2}}{k(s,x)+k(s,y)}\chi_{\{k(s,x)+k(s,y)>0\}}\,\mathrm{d}s\gamma(y,x)\,\mathrm{d}y\,\mathrm{d}x\,\mathrm{d}t\\
        &+\int_{\Gamma}v^{2}(t)\int_{\Omega}k(t,y)\int_{\Omega}\int_{\Gamma}\frac{(k(s,x)-k(s,y))^{2}}{k(s,x)+k(s,y)}\chi_{\{k(s,x)+k(s,y)>0\}}\,\mathrm{d}s\gamma(y,x)\,\mathrm{d}x\,\mathrm{d}y\,\mathrm{d}t\\
        \leqslant &C\int_{\Gamma}v^{2}(s)\int_{\Omega}\frac{\gamma(s,x)}{\int_{\Gamma}\gamma(z,x)\,\mathrm{d}z+c}\,\mathrm{d}x\,\mathrm{d}s\\
        \leqslant  &C\|v\|_{\mathrm{W}(\Gamma;\gamma)}.
    \end{align*}
\end{proof}
\begin{corollary}
    Let $\Omega\subset \Rd$ be a bounded, convex, nonempty, open subset and $\gamma\in\kernel$ such that for a.e.\;$x\in \Omega$ we have
    \begin{equation*}
        \int_{\Gamma}\gamma(z,x)\,\mathrm{d}x<\infty
    \end{equation*}
    and such that $\frac{\gamma(y,\cdot)}{\int_{\Gamma}\gamma(z,\cdot)\,\mathrm{d}z}$ is differentiable a.e.\;in $\Omega$ for a.e.\;$y\in\Gamma$. Furthermore let there be a measurable function $\varphi\colon\Rd\to [0,\infty]$ and a constant $k>0$ with $\gamma(y,x)\leqslant k \varphi(y-x)$ for a.e.\;$x,y\in\Omega$ and
    \begin{equation*}
        \int_{\Rd}\min\{1,\|z\|^{2}\}\varphi(z)\,\mathrm{d}z<\infty.
    \end{equation*}
    Let $c\in[0,\infty)$ satisfy $\einf\limits_{x\in\Omega}\int_{\Gamma}\gamma(y,x)\,\mathrm{d}y+c>0$ and define the function $w\colon \Gamma \to (0,\infty]$ by
    \begin{equation*}
        w(y)=\int_{\Omega}\dfrac{\gamma(y,x)}{\int_{\Gamma}\gamma(z,x)\,\mathrm{d}z+c}\,\mathrm{d}x
    \end{equation*}
    and set $\mathrm{W}(\Gamma;\gamma):=\{u:\Gamma\to \mathbb{R}\quad\text{measurable with }\|u\|_{\mathrm{W}(\Gamma;\gamma)}<\infty\}$, where
    \begin{align*}
        \|u\|_{\mathrm{W}(\Gamma;\gamma)}^{2}:=\int_{\Gamma}u^{2}(y)w(y)\,\mathrm{d}y+\int_{\Gamma}\int_{\Gamma}(u(y)-u(z))^{2}\int_{\Omega}\dfrac{\gamma(y,x)\gamma(z,x)}{\int_{\Gamma}\gamma(s,x)\,\mathrm{d}s+c}\,\mathrm{d}x\,\mathrm{d}y\,\mathrm{d}z.
    \end{align*}
    If there is a constant $C>0$ with
    \begin{equation*}
       \|\frac{\partial}{\partial x}\left(\frac{\gamma(y,\cdot)}{\int_{\Gamma}\gamma(z,\cdot)\,\mathrm{d}z+c}\right)\|\leqslant C\left(\frac{\gamma(y,\cdot)}{\int_{\Gamma}\gamma(z,\cdot)\,\mathrm{d}z+c}\right) \text{ a.e.\;in }\Omega\text{ for a.e.\:}y\in\Gamma,  
    \end{equation*}
    then $\Tr\colon\V\to\mathrm{W}(\Gamma;\gamma),\, \Tr(v)=v\vert_{\Gamma}$ is a bounded, linear and surjective operator.
\end{corollary}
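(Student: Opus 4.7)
The plan is to reduce everything to the preceding corollary: with $k(s,x) := \gamma(s,x)/\bigl(\int_{\Gamma}\gamma(z,x)\,dz+c\bigr)$ as defined there, it suffices to verify the two essential-supremum conditions
\[
    \esup_{x\in\Omega}\int_{\Omega}\!\int_{\Gamma}\frac{(k(s,x)-k(s,y))^{2}}{k(s,x)+k(s,y)}\chi_{\{k(s,x)+k(s,y)>0\}}\,\mathrm{d}s\,\gamma(y,x)\,\mathrm{d}y<\infty,
\]
and its symmetric counterpart in $y$. Everything else, including boundedness of $\Tr$, is already delivered by the earlier result.

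The key step is a pointwise estimate for $|k(s,x)-k(s,y)|$. Since $\Omega$ is convex, the segment $[y,x]\subset\Omega$, and the differentiability hypothesis together with the bound $\|\nabla_{x}k(s,\cdot)\|\leqslant C\,k(s,\cdot)$ yields a Gronwall-type control: parametrising $\ell(t):=k(s,y+t(x-y))$, one has $|\ell'(t)|\leqslant C\|x-y\|\,\ell(t)$, so that $\ell(t)\leqslant k(s,x)\,e^{C\|x-y\|(1-t)}$ and $\ell(t)\leqslant k(s,y)\,e^{C\|x-y\|t}$, hence $\ell(t)\leqslant\min\{k(s,x),k(s,y)\}\,e^{C\operatorname{diam}(\Omega)}$. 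The fundamental theorem of calculus along the segment then gives
\[
    |k(s,x)-k(s,y)|\leqslant C\|x-y\|\int_{0}^{1}\ell(t)\,\mathrm{d}t\leqslant Ce^{C\operatorname{diam}(\Omega)}\|x-y\|\min\{k(s,x),k(s,y)\},
\]
so that $(k(s,x)-k(s,y))^{2}\leqslant C^{2}e^{2C\operatorname{diam}(\Omega)}\|x-y\|^{2}k(s,x)k(s,y)$, and therefore
\[
    \frac{(k(s,x)-k(s,y))^{2}}{k(s,x)+k(s,y)}\chi_{\{k(s,x)+k(s,y)>0\}}\leqslant C^{2}e^{2C\operatorname{diam}(\Omega)}\|x-y\|^{2}\min\{k(s,x),k(s,y)\}.
\]

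Integrating this bound first against $\mathrm{d}s$ on $\Gamma$ and using $\int_{\Gamma}k(s,x)\,\mathrm{d}s\leqslant 1$ yields an estimate of order $\|x-y\|^{2}$, independent of $s$. It then remains to bound $\int_{\Omega}\|x-y\|^{2}\gamma(y,x)\,\mathrm{d}y$ uniformly in $x$. Here the Lévy domination $\gamma(y,x)\leqslant k\varphi(y-x)$ enters: boundedness of $\Omega$ means every $z=y-x$ has $\|z\|\leqslant\operatorname{diam}(\Omega)$, and on such a set $\|z\|^{2}\leqslant\max\{1,\operatorname{diam}(\Omega)^{2}\}\min\{1,\|z\|^{2}\}$, so that $\int_{\Omega}\|x-y\|^{2}\gamma(y,x)\,\mathrm{d}y\leqslant k\max\{1,\operatorname{diam}(\Omega)^{2}\}\int_{\Rd}\min\{1,\|z\|^{2}\}\varphi(z)\,\mathrm{d}z<\infty$. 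The same argument with the roles of $x$ and $y$ swapped handles the second essential supremum, so the preceding corollary applies.

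The main obstacle is the fundamental-theorem-of-calculus step: turning a.e.\ differentiability of $k(s,\cdot)$ on $\Omega$ into absolute continuity along almost every segment $[y,x]$. The cleanest way is to read the hypothesis $\|\nabla_{x}k(s,\cdot)\|\leqslant Ck(s,\cdot)$ as a weak/distributional one, so that $\log k(s,\cdot)$ has a bounded weak derivative wherever $k(s,\cdot)>0$ and is thus locally Lipschitz; this justifies the line-integration and the Gronwall step without further regularity. If instead one only has classical differentiability a.e., a Fubini argument on the direction $(x-y)/\|x-y\|$ reduces matters to absolutely continuous one-dimensional restrictions, and the same chain of inequalities goes through.
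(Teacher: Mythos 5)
Your proof is correct but takes a genuinely different route from the paper. The paper verifies the surjectivity criterion directly from \Cref{thm:trsur}(iii): it writes $|k(s,x)-k(s,y)|$ as the integral of the directional derivative along the segment $[x,y]$, applies Hölder, and is then left with a quadruple integral that it handles by the change of variables $a=x+t(y-x)$ together with the Lévy domination $\gamma(y,x)\le k\varphi(y-x)$ and $\int_{\Rd}\min\{1,\|z\|^2\}\varphi(z)\,\mathrm{d}z<\infty$. You instead upgrade the differential inequality $\|\nabla_x k(s,\cdot)\|\le C\,k(s,\cdot)$ to a Gronwall bound $\ell(t)\le\min\{k(s,x),k(s,y)\}\,e^{C\operatorname{diam}(\Omega)}$ along the segment, deduce the pointwise estimate $(k(s,x)-k(s,y))^2/(k(s,x)+k(s,y))\lesssim\|x-y\|^2\min\{k(s,x),k(s,y)\}$, and then verify the two essential-supremum conditions of the \emph{preceding} corollary, invoking that result rather than \Cref{thm:trsur} directly. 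Both routes consume the same hypotheses (convexity to reach the segment, the gradient bound for the FTC step, boundedness of $\Omega$ and the Lévy condition to control $\|x-y\|^2\gamma(y,x)$); your route pays a harmless exponential constant $e^{C\operatorname{diam}(\Omega)}$ in exchange for a cleaner reduction to an already-established sufficient condition, whereas the paper's substitution $a=x+t(y-x)$ is a bit more intricate but avoids that loss. You also explicitly flag — and the paper does not — the regularity subtlety of passing from a.e.\ differentiability in $\Omega$ to absolute continuity along a.e.\ segment; your two suggested fixes (reading the hypothesis distributionally so that $\log k(s,\cdot)$ is locally Lipschitz, or a Fubini-on-lines argument) are both standard and sound, so this is a legitimate strengthening of the exposition rather than a gap in your argument.
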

\begin{proof}
    For a.e.\;$s\in\Gamma$ and a.e.\;$x,y\in\Omega$ we get
    \begin{align*}
        &\left\vert\frac{\gamma(s,x)}{\int_{\Gamma}\gamma(z,x)\,\mathrm{d}z+c}-\frac{\gamma(s,y)}{\int_{\Gamma}\gamma(z,y)\,\mathrm{d}z+c}\right\vert\\
        =&\left\vert\int_{(0,1)}\langle\frac{\partial}{\partial x}\frac{\gamma(s,x+t(y-x))}{\int_{\Gamma}\gamma(z,x+t(y-x))\,\mathrm{d}z+c},y-x\rangle\,\mathrm{d}t\right\vert\\
        \leqslant&C \int_{(0,1)}\frac{\gamma(s,x+t(y-x))}{\int_{\Gamma}\gamma(z,x+t(y-x))\,\mathrm{d}z+c} \|y-x\|\,\mathrm{d}t.
    \end{align*}
    Let $v\in\mathrm{W}(\Gamma;\gamma)$. By the Hölder inequality we estimate
    \begin{align*}
        &\int_{\Omega}\int_{\Omega}(E(v)(x)-E(v)(y))^{2}\gamma(y,x)\,\mathrm{d}y\,\mathrm{d}x\\
        \leqslant &C\int_{\Gamma}v^{2}(s)\int_{\Omega}\int_{\Omega}\int_{(0,1)}\frac{\gamma(s,x+t(y-x))}{\int_{\Gamma}\gamma(z,x+t(y-x))\,\mathrm{d}z+c} \|y-x\|^{2}\gamma(y,x)\,\mathrm{d}t\,\mathrm{d}y\,\mathrm{d}x\,\mathrm{d}s.
    \end{align*}
    Because $\Omega$ is bounded, there is a $R>1$ with $\|x-y\|\leqslant R$ for all $x,y\in\Omega$ and substitution yields for a.e $s\in\Gamma$
    \begin{align*}
        &\int_{\Omega} \int_{\Omega}\int_{(0,1)}\frac{\gamma(s,x+t(y-x))}{\int_{\Gamma}\gamma(z,x+t(y-x))\,\mathrm{d}z+c} \|y-x\|^{2}\gamma(y,x)\,\mathrm{d}t\,\mathrm{d}y\,\mathrm{d}x\\
        \leqslant& k\int_{\Omega}\frac{\gamma(s,a)}{\int_{\Gamma}\gamma(z,a)\,\mathrm{d}z+c}  \int_{\Omega}\int_{(0,1)}\min\{\|\frac{a-x}{t}\|^{2},R\}\varphi(\frac{a-x}{t})\,\mathrm{d}t\,\mathrm{d}x\,\mathrm{d}a\\
        \leqslant&k R  \int_{\Rd}\min\{1,\|x\|^{2}\}\varphi(x)\,\mathrm{d}z\int_{\Omega}\frac{\gamma(s,a)}{\int_{\Gamma}\gamma(z,a)\,\mathrm{d}z+c}\,\mathrm{d}a .
    \end{align*}
    All in all we conclude, there is a $\alpha>0$ with
    \begin{equation*}
        \int_{\Omega}\int_{\Omega}(E(v)(x)-E(v)(y))^{2}\gamma(y,x)\,\mathrm{d}y\,\mathrm{d}x\leqslant \alpha \|v\|_{\mathrm{W}(\Gamma;\gamma)}\text{ for all }v\in\mathrm{W}(\Gamma;\gamma).
    \end{equation*}
\end{proof}
\begin{remark}
    The definition of $w\colon\Gamma\to(0,\infty]$ and $\mathrm{W}(\Gamma;\gamma)$ in \Cref{thm:Tr1} can be generalized. For that reason, let $\widetilde{\Omega}\subset\Omega$ be open and let a measurable functions $\alpha\in\mathrm{L}^{\infty}(\widetilde{\Omega})$ satisfy $\einf_{x\in\widetilde{\Omega}}\left(\int_{\Gamma}\gamma(z,x)\,\mathrm{d}z+\alpha(x)\right)>0$. Instead of assuming that $\int_{\Gamma}\gamma(z,x)\,\mathrm{d}z<\infty$ holds for a\,.e\,. $x\in\Omega$, we just assume that this inequality holds for a\,.e\,. on $\widetilde{\Omega}$. Then our statements remain true, if we define  $w\colon \Gamma \to (0,\infty]$ by
    \begin{equation*}
        w(y)=\int_{\widetilde{\Omega}}\dfrac{\gamma(y,x)}{\int_{\Gamma}\gamma(z,x)\,\mathrm{d}z+\alpha(x)}\,\mathrm{d}x\quad\text{ for }y\in\Gamma
    \end{equation*}
    and set $\mathrm{W}(\Gamma;\gamma):=\{u:\Gamma\to \mathbb{R}\quad\text{measurable with }\|u\|_{\mathrm{W}(\Gamma,\gamma)}<\infty\}$, where
    \begin{equation*}
        \|u\|_{\mathrm{W}(\Gamma;\gamma)}:=\int_{\Gamma}u^{2}(y)w(y)\,\mathrm{d}y+\int_{\Gamma}\int_{\Gamma}(u(y)-u(z))^{2}\int_{\widetilde{\Omega}}\dfrac{\gamma(y,x)\gamma(z,x)}{\int_{\Gamma}\gamma(s,x)\,\mathrm{d}s+\alpha(x)}\,\mathrm{d}x\,\mathrm{d}y\,\mathrm{d}z.
    \end{equation*}
    Furthermore we highlight, that we assume $\Omega\subset\Rd$ to be bounded so that $w$ is integrable on $\Gamma$, and therefore a.e.\;finite. Even if $\lambda(\Omega)=\infty$ holds, \Cref{thm:Tr1} and \Cref{thm:trsur} remain valid if $w$ is finite a.e.\;on $\Gamma$. This is for example the case if $ \int_{\Omega}\gamma(y,x)\,\mathrm{d}x<\infty$ holds for a.e. $y\in\Gamma$.
\end{remark}
\begin{theorem}
    Let $\Omega\subset \Rd$ be a bounded, nonempty, open subset and $\gamma\in\kernel$ such that for a.e.\;$x\in \Omega$ we have
    \begin{align*}
        \int_{\Gamma}\gamma(z,x)\,\mathrm{d}x<\infty. 
    \end{align*}
    For a given $c\in(0,\infty)$, define the function $w\colon \Gamma \to (0,\infty]$ by
    \begin{equation*}
        w(y)=\int_{\Omega}\dfrac{\gamma(y,x)}{\int_{\Gamma}\gamma(z,x)\,\mathrm{d}z+c}\,\mathrm{d}x.
    \end{equation*}
    Furthermore let $\Gamma_{0}\subset\Gamma$ be measurable, then $\Tr\vert_{\Gamma_{0}}\colon\V\to \mathrm{L}^{2}(\Gamma_{0},w),\; u\mapsto u\vert_{\Gamma_{0}}$ is a bounded linear operator and $\{u\in\V\text{ such that } \Tr\vert_{\Gamma_{0}} u=0\}$ is a closed subspace of $\V$ with respect to $\|\cdot\|_{\V}$.
\end{theorem}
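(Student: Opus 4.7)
The statement has two parts: boundedness (with linearity) of $\Tr|_{\Gamma_0}$, and closedness of its kernel. Both should follow almost immediately from results established earlier in the paper, so I expect no real obstacle; the main task is simply to glue the pieces together carefully.

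My plan for boundedness is to invoke \Cref{thm:Tr1} directly. Since $c>0$, the hypothesis $\einf_{x\in\Omega}\int_{\Gamma}\gamma(y,x)\,\mathrm{d}y+c>0$ of that theorem is automatically satisfied, so \Cref{thm:Tr1} gives a constant $C>0$ with $\|\Tr(u)\|_{\mathrm{L}^{2}(\Gamma;w)}\leqslant C\|u\|_{\V}$ for every $u\in\V$. Because $\Gamma_{0}\subset\Gamma$ is measurable and the integrand $u^{2}(y)w(y)$ is nonnegative, one has
\begin{equation*}
    \|\Tr|_{\Gamma_{0}}(u)\|_{\mathrm{L}^{2}(\Gamma_{0};w)}^{2}=\int_{\Gamma_{0}}u^{2}(y)w(y)\,\mathrm{d}y\leqslant\int_{\Gamma}u^{2}(y)w(y)\,\mathrm{d}y\leqslant C^{2}\|u\|_{\V}^{2}.
\end{equation*}
Linearity of $\Tr|_{\Gamma_{0}}$ is immediate, since restriction is linear in $u$.

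For closedness of the kernel, I would argue by the standard fact that the kernel of a bounded linear operator is closed. Concretely, let $(u_{n})_{n\in\mathbb{N}}\subset\V$ with $\Tr|_{\Gamma_{0}}(u_{n})=0$ for every $n$, and suppose $u_{n}\to u$ in $\V$ with respect to $\|\cdot\|_{\V}$. Then by the bound just established,
\begin{equation*}
    \|\Tr|_{\Gamma_{0}}(u)\|_{\mathrm{L}^{2}(\Gamma_{0};w)}=\|\Tr|_{\Gamma_{0}}(u-u_{n})\|_{\mathrm{L}^{2}(\Gamma_{0};w)}\leqslant C\|u-u_{n}\|_{\V}\longrightarrow 0,
\end{equation*}
so $\Tr|_{\Gamma_{0}}(u)=0$ in $\mathrm{L}^{2}(\Gamma_{0};w)$, i.e.\ $u=0$ a.e.\ on $\Gamma_{0}$ with respect to the measure $w\,\mathrm{d}y$. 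Since $w(y)>0$ for a.e.\ $y\in\Gamma$, this yields $u=0$ a.e.\ on $\Gamma_{0}$, so $u$ lies in $\{v\in\V\colon\Tr|_{\Gamma_{0}}v=0\}$. Hence this set is closed in $\V$.

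The only subtlety worth addressing explicitly is that elements of $\V$ are equivalence classes of functions agreeing a.e.\ on $\Omega\cup\Gamma$; because $\Gamma_{0}\subset\Gamma$, the value of $\Tr|_{\Gamma_{0}}u$ is independent of the chosen representative, so the operator is well defined. Beyond this bookkeeping, both claims reduce to \Cref{thm:Tr1} together with the elementary functional-analytic principle that bounded linear operators have closed kernels.
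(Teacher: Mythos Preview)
Your proof is correct and follows essentially the same approach as the paper's own proof: invoke \Cref{thm:Tr1} for boundedness (using that $c>0$ guarantees the positivity hypothesis), then show closedness of the kernel by the standard sequential argument together with the positivity of $w$ on $\Gamma$. The paper omits the explicit restriction inequality and the remark on equivalence classes, but otherwise the arguments coincide.
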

\begin{proof}
    Because of \Cref{thm:Tr1} we see that $\Tr\vert_{\Gamma_{0}}$ is a bounded linear operator. Now let $(v_{n})_{n\in\mathbb{N}}$ be a sequence  in $\{u\in\V\colon \Tr\vert_{\Gamma_{0}} u=0\}$ converging to $v\in\V$ with respect to $\|\cdot\|_{\V}$, then
    \begin{equation*}
        \|\Tr\vert_{\Gamma_{0}}(v)\|_{\mathrm{L}^{2}(\Gamma_{0};w)}=\|\Tr\vert_{\Gamma_{0}}(v-v_{n})\|_{\mathrm{L}^{2}(\Gamma_{0};w)}\leqslant \|v-v_{n}\|_{\V}\to 0,\quad \text{ for }n\to\infty.
    \end{equation*}
    Therefore $v(y)w(y)=0$ holds for a.e.\;$y\in\Gamma_{0}$ and because $w$ is positive on $\Gamma_{0}\subset\Gamma$ we get $\Tr\vert_{\Gamma_{0}}(v)=0$.
\end{proof}

\section{The regional problem}\label{sec:reg}

In this section we consider the nonlocal Robin problem
\begin{equation}\tag{RP}\label{RP}
    \left\{\begin{aligned}
        \mathcal{L}u(x)&=f(x)\quad &\text{for }x\in\Omega,\\
        \alpha(y)u(y)w(y)+(1-\alpha(y))\Neu u(y)&=g(y)w(y)\quad &\text{for }y\in\Gamma,
    \end{aligned}\right.
    \end{equation}
for $\gamma\in\kernel$. Recalling \cref{de:weak_s} and \cref{thm:gauss}, we define our weak solution:
\begin{definition}\label{de:weak_srobin}
    Let $\Omega\subset \Rd$ be bounded, nonempty and open and let $\gamma\in\kernel$ be symmetric with $\int_{\Gamma}\gamma(x,y)\,\mathrm{d}y<\infty$ for a.e.\;$x\in\Omega$. Set 
    \begin{equation*}
        w\colon\Gamma\to[0,\infty),\;w(y)=\int_{\Omega}\frac{\gamma(y,x)}{\int_{\Gamma}\gamma(z,x)\,\mathrm{d}z+c}\,\mathrm{d}x,
    \end{equation*}
    where the constant $c\geqslant 0$ satisfies $\einf_{x\in\Omega}\int_{\Gamma}\gamma(z,x)\,\mathrm{d}z+c>0$. Furthermore let $\alpha\colon\Gamma\to[0,1]$ be measurable and set $\Gamma_{1}:=\{y\in\Gamma\colon\alpha(y)<1\}$. Given measurable functions $f\colon \Omega\to\mathbb{R}$ and $g\colon \Gamma\to\mathbb {R}$, a function $u\in \{v\in\V\colon v=g \text{ a.e.\;on }\Gamma\setminus\Gamma_{1}\}$ is called a weak solution to the nonlocal Robin problem \eqref{RP} if
    \begin{align*}
    	\int_{\Omega}f(x)v(x)\,\mathrm{d}x+\int_{\Gamma_{1}}\frac{g(y)\,v(y)\,w(y)}{1-\alpha(y)}\,\mathrm{d}y=\mathfrak{B}(u,v)+\int_{\Gamma_{1}}\frac{\alpha(y)\,u(y)\,v(y)\,w(y)}{1-\alpha(y)}\,\mathrm{d}y
    \end{align*}
    holds for all $v \in \V$.
\end{definition}
Following the proof of \Cref{thm:exnonhom}, we can easily obtain sufficient assumptions, so that the weak solution exists. However we want to present a different approach.

As first shown in \cite{remneum}, for the fractional Laplacian, it is possible to incorporate the following nonlocal Robin boundary condition
\begin{equation}\label{eq:robin-boundary-condition}
    \alpha(y)u(y)+(1-\alpha(y))\left(\int_{\Omega}\frac{1}{\|x-y\|^{d+2s}}\right)^{-1}\int_{\Omega}\frac{u(x)-u(y)}{\|x-y\|^{d+2s}}\,\mathrm{d}x=0,\quad\text{for }y\in\Gamma,
\end{equation}
into the nonlocal operator $\mathcal{L}$, by rearranging \eqref{eq:robin-boundary-condition} into
\begin{align*}
    u(y)=(1-\alpha(y))\left(\int_{\Omega}\frac{1}{\|x-y\|^{d+2s}}\right)^{-1}\int_{\Omega}\frac{u(x)}{\|x-y\|^{d+2s}}\,\mathrm{d}x,\quad\text{for }y\in\Gamma,
\end{align*}
and then inserting this representation into $\mathcal{L}u$. Note that boundary conditions on $\widehat{\Gamma}\setminus\Gamma$ are irrelevant in the evaluation of $\La u$ (see \cref{rem:nons}). In this section, we follow along the same lines and in the following theorem we reformulate our nonlocal Robin problem \eqref{RP}, where we consider the following 
\begin{equation*}
    \alpha(y)u(y)-(1-\alpha(y))\int_{\Omega}u(x)\gamma(x,y)-u(y)\gamma(y,x)\,\mathrm{d}x=g(y)\quad \text{for }y\in\Gamma,
\end{equation*}
Robin boundary condition, for simplicity, because the proof for
\begin{equation*}
    \alpha(y)u(y)w(y)-(1-\alpha(y))\int_{\Omega}u(x)\gamma(x,y)-u(y)\gamma(y,x)\,\mathrm{d}x=g(y)w(y)\quad \text{for }y\in\Gamma,
\end{equation*}
is analogous.
\begin{theorem}\label{thm:reg}
    Let $\Omega\subset\Rd$ be an open, nonempty subset and $\gamma\in\kernel$ with 
    \begin{align*}
        &\|\gamma(\cdot,x)\|_{\mathrm{L}^{\infty}(\Gamma)}+\int_{\Gamma}\gamma(x,y)\,\mathrm{d}y<\infty, \quad\text{for a.e.\;}x\in\Omega\\
        \text{and}\quad &\|\gamma(\cdot,y)\|_{\mathrm{L}^{\infty}(\Omega)}+\int_{\Omega}\gamma(y,x)\,\mathrm{d}x<\infty,\quad \text{for a.e.\;}y\in\Gamma.
    \end{align*}
    Furthermore let the measurable function $u\colon\Rd\to\mathbb{R}$ satisfy
    \begin{align*}
       \int_{\Omega}\vert u(x)\vert\,\mathrm{d}x<\infty \quad\text{and}\quad \int_{\Omega}\int_{\Rd} \vert u(x)\gamma(x,y)-u(y)\gamma(y,x)\vert\,\mathrm{d}y\,\mathrm{d}x<\infty.
    \end{align*}
    Let  $\alpha\colon\Gamma\to[0,1]$ be measurable and $g\in\mathrm{L}^{1}(\Gamma)$ be given, such that for all  $y\in\Gamma$ we have
    \begin{equation*}\tag{B}\label{B}
        \alpha(y)u(y)-(1-\alpha(y))\int_{\Omega}u(x)\gamma(x,y)-u(y)\gamma(y,x)\,\mathrm{d}x=g(y).
    \end{equation*}
    Then defining $\gamma_{\alpha}\colon\Omega\times\Omega\to[0,\infty]$ by
    \begin{equation*}
        \gamma_{\alpha}(x,z)=\gamma(x,z)+\int_{\Gamma}\frac{(1-\alpha(y))\gamma(x,y)\gamma(y,z)}{(1-\alpha(y))\int_{\Omega}\gamma(y,v)\,\mathrm{d}v+\alpha(y)}\,\mathrm{d}y,\quad (x,z)\in\Omega\times\Omega,
    \end{equation*}
    we have for a.e.\;$x\in\Omega$ that 
    \begin{align*}
        \mathcal{L}_{\gamma}u(x)=&\int_{\Rd}u(x)\gamma(x,y)-u(y)\gamma(y,x)\,\mathrm{d}y\\
        =&\int_{\Omega} u(x)\gamma_{\alpha}(x,z)-u(z)\gamma_{\alpha}(z,x)\,\mathrm{d}z\\
        &+u(x)\left(\int_{\widehat{\Gamma}\setminus\Gamma}\gamma(x,y)\,\mathrm{d}y+\int_{\Gamma}\frac{\alpha(y)\gamma(x,y)}{(1-\alpha(y))\int_{\Omega}\gamma(y,z)\,\mathrm{d}z+\alpha(y)}\,\mathrm{d}y\right)\\
        &-\int_{\Gamma}\dfrac{g(y)\gamma(y,x)}{(1-\alpha(y))\int_{\Omega}\gamma(y,v)\,\mathrm{d}v+\alpha(y)}\,\mathrm{d}y.
    \end{align*}
\end{theorem}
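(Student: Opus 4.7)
The plan is to decompose $\mathcal{L}_\gamma u(x)=\int_{\mathbb{R}^d}[u(x)\gamma(x,y)-u(y)\gamma(y,x)]\,\mathrm{d}y$ along the partition $\mathbb{R}^d=\Omega\cup\Gamma\cup(\widehat{\Gamma}\setminus\Gamma)\cup R$, where $R$ is the remaining set on which $\gamma(y,x)=\gamma(x,y)=0$ for a.e.\ $x\in\Omega$. On $R$ the integrand vanishes, on $\widehat{\Gamma}\setminus\Gamma$ we have $\gamma(y,x)=0$ a.e., so only $u(x)\int_{\widehat{\Gamma}\setminus\Gamma}\gamma(x,y)\,\mathrm{d}y$ survives, and on $\Omega$ we obtain $I_1:=\int_{\Omega}[u(x)\gamma(x,z)-u(z)\gamma(z,x)]\,\mathrm{d}z$. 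The substantive work happens on $\Gamma$.

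Introduce the shorthand $\beta(y):=\alpha(y)+(1-\alpha(y))\int_{\Omega}\gamma(y,v)\,\mathrm{d}v$; under the integrability hypothesis $\beta(y)$ is finite a.e.\ on $\Gamma$, and it is strictly positive since on $\Gamma$ either $\alpha(y)>0$ or $\int_{\Omega}\gamma(y,v)\,\mathrm{d}v>0$. Solving the Robin condition \eqref{B} pointwise for $u(y)$ gives
\[
u(y)\beta(y)=g(y)+(1-\alpha(y))\int_{\Omega}u(z)\gamma(z,y)\,\mathrm{d}z.
\]
Inserting this into the $\Gamma$-piece $\int_{\Gamma}[u(x)\gamma(x,y)-u(y)\gamma(y,x)]\,\mathrm{d}y$ and applying Fubini to the double integral produces
\[
\int_{\Gamma}u(x)\gamma(x,y)\,\mathrm{d}y-\int_{\Gamma}\frac{g(y)\gamma(y,x)}{\beta(y)}\,\mathrm{d}y-\int_{\Omega}u(z)\Bigl[\int_{\Gamma}\frac{(1-\alpha(y))\gamma(y,x)\gamma(z,y)}{\beta(y)}\,\mathrm{d}y\Bigr]\mathrm{d}z.
\]
The inner bracket is exactly $\gamma_{\alpha}(z,x)-\gamma(z,x)$ by definition.

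To convert the $u(x)$-coefficient into the desired form, I use the key identity
\[
\int_{\Omega}[\gamma_{\alpha}(x,z)-\gamma(x,z)]\,\mathrm{d}z=\int_{\Gamma}\frac{(1-\alpha(y))\gamma(x,y)\int_{\Omega}\gamma(y,z)\,\mathrm{d}z}{\beta(y)}\,\mathrm{d}y=\int_{\Gamma}\gamma(x,y)\,\mathrm{d}y-\int_{\Gamma}\frac{\alpha(y)\gamma(x,y)}{\beta(y)}\,\mathrm{d}y,
\]
which follows from $(1-\alpha(y))\int_{\Omega}\gamma(y,z)\,\mathrm{d}z=\beta(y)-\alpha(y)$. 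Substituting $\int_{\Gamma}\gamma(x,y)\,\mathrm{d}y$ accordingly and combining with $I_1$ and the $\gamma_\alpha$-convolution term consolidates the $\gamma$-contributions into $\int_{\Omega}[u(x)\gamma_{\alpha}(x,z)-u(z)\gamma_{\alpha}(z,x)]\,\mathrm{d}z$. Adding the $\widehat{\Gamma}\setminus\Gamma$-piece and the residual $u(x)\int_{\Gamma}\alpha(y)\gamma(x,y)/\beta(y)\,\mathrm{d}y$ produces exactly the claimed formula.

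The main obstacle is the repeated use of Fubini and the exchange of integration order between $\Omega$ and $\Gamma$; absolute integrability is guaranteed by the assumption $\int_{\Omega}\int_{\mathbb{R}^d}|u(x)\gamma(x,y)-u(y)\gamma(y,x)|\,\mathrm{d}y\,\mathrm{d}x<\infty$ together with the pointwise $\mathrm{L}^{\infty}$ and $\mathrm{L}^1$ bounds on $\gamma(\cdot,x)$ and $\gamma(x,\cdot)$ stated in the hypotheses, which control the kernel $\gamma(x,y)\gamma(y,z)/\beta(y)$ appearing in $\gamma_\alpha$. A side verification is that $g\in\mathrm{L}^1(\Gamma)$ and $\|\gamma(\cdot,x)\|_{\mathrm{L}^{\infty}(\Gamma)}<\infty$ together ensure $\int_{\Gamma}|g(y)|\gamma(y,x)/\beta(y)\,\mathrm{d}y<\infty$ for a.e.\ $x\in\Omega$, so all terms in the final identity are well-defined.
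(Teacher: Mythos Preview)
Your proposal is correct and follows essentially the same approach as the paper: decompose the domain, solve the Robin condition \eqref{B} for $u(y)$ in terms of $\beta(y)^{-1}$, substitute back into the $\Gamma$-integral, and apply Fubini. The only notable difference is cosmetic---the paper expands $\tfrac{1}{c(y)}u(x)\gamma(x,y)$ directly, whereas you isolate $\int_\Gamma u(x)\gamma(x,y)\,\mathrm{d}y$ first and then invoke the identity $(1-\alpha(y))\int_\Omega\gamma(y,z)\,\mathrm{d}z=\beta(y)-\alpha(y)$; both routes are the same algebra. Your justification that $\int_\Gamma |g(y)|\gamma(y,x)/\beta(y)\,\mathrm{d}y<\infty$ for a.e.\ $x$ is slightly underspecified: the $\mathrm{L}^\infty$-bound on $\gamma(\cdot,x)$ handles only the $\alpha(y)/\beta(y)$-part, while the $(1-\alpha(y))/\beta(y)$-part needs the extra step of integrating over $\Omega$ and using Fubini with $g\in\mathrm{L}^1(\Gamma)$ to obtain a.e.\ finiteness---this is exactly what the paper does, and your ``for a.e.\ $x\in\Omega$'' suggests you had this in mind.
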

\begin{proof}
    First of all by Fubini's Theorem $\gamma_{\alpha}$ is measurable and for a.e.\;$x\in\Omega$ have
    \begin{align*}
        0\leqslant &\int_{\Omega}\int_{\Gamma}\frac{(1-\alpha(y))\gamma(x,y)\gamma(y,z)}{(1-\alpha(y))\int_{\Omega}\gamma(y,v)\,\mathrm{d}v+\alpha(y)}\,\mathrm{d}y\,\mathrm{d}z\\
        \leqslant &\int_{\Gamma}\frac{\int_{\Omega}\gamma(y,z)\,\mathrm{d}z}{\int_{\Omega}\gamma(y,v)\,\mathrm{d}v}\gamma(x,y)\,\mathrm{d}y<\infty.
    \end{align*}
    So $\gamma_{\alpha}$ is finite a.e.\;on $\Omega\times\Omega$. Without loss of generality we assume 
    \begin{equation*}
        \int_{\Rd} \vert u(x)\gamma(x,y)-u(y)\gamma(y,x)\vert\,\mathrm{d}y<\infty
    \end{equation*}
    for all $x\in \Omega$. Now let $x\in\Omega$, and for $y\in\Gamma$ set
    \begin{equation*}
        c(y):=\frac{1}{(1-\alpha(y))\int_{\Omega}\gamma(y,v)\,\mathrm{d}v+\alpha(y)}.
    \end{equation*}
    Then we have to show that  
    \begin{align*}
        \mathcal{L}_{\gamma}u(x)=&\int_{\Omega} u(x)\gamma_{\alpha}(x,z)-u(z)\gamma_{\alpha}(z,x)\,\mathrm{d}z\\
        &+u(x)\left(\int_{\widehat{\Gamma}\setminus\Gamma}\gamma(x,y)\,\mathrm{d}y+\int_{\Gamma}c(y)\alpha(y)\gamma(x,y)\,\mathrm{d}y\right)\\\
        &-\int_{\Gamma}c(y)g(y)\,\mathrm{d}y.
    \end{align*}
    First we observe 
     \begin{align*}
         &\int_{\Omega}\vert u(x)\vert\gamma(x,y)+\vert u(y)\vert\gamma(y,x)\,\mathrm{d}x\\
         \leqslant &\|\gamma(\cdot,y)\|_{\mathrm{L}^{\infty}(\Omega)} \int_{\Omega}\vert u(x)\vert\,\mathrm{d}x+\vert u(y)\vert\int_{\Omega}\gamma(y,x)\,\mathrm{d}x<\infty,\quad \text{for }y\in\Gamma,
     \end{align*}
    and by rearranging \eqref{B} we therefore for $y\in\Gamma$ obtain
    \begin{equation}\tag{B*}\label{B*}
        c(y)\left(g(y)+(1-\alpha(y))\int_{\Omega} u(z)\gamma(z,y)\,\mathrm{d}z\right)=u(y).
    \end{equation}
    Linearity of the integral yields
    \begin{equation*}
        \mathcal{L}_{\gamma}u(x)=\int_{\Omega} u(x)\gamma(x,y)-u(y)\gamma(y,x)\,\mathrm{d}y+\int_{\widehat{\Gamma}} u(x)\gamma(x,y)-u(y)\gamma(y,x)\,\mathrm{d}y,
    \end{equation*}
    and by \eqref{B*} we have
    \begin{align*}
        &\int_{\Gamma} u(x)\gamma(x,y)-u(y)\gamma(y,x)\,\mathrm{d}y\\
        =&\int_{\Gamma}c(y)\left(\frac{1}{c(y)} u(x)\gamma(x,y)-\left(g(y)+(1-\alpha(y))\int_{\Omega} u(z)\gamma(z,y)\,\mathrm{d}z\right)\gamma(y,x)\right)\,\mathrm{d}y.
    \end{align*}
    For $y\in\Gamma$ we have 
    \begin{align*}
        &\frac{1}{c(y)} u(x)\gamma(x,y)-\left((1-\alpha(y))\int_{\Omega} u(z)\gamma(z,y)\,\mathrm{d}z\right)\gamma(y,x)\\
        &=\left((1-\alpha(y))\int_{\Omega}u(x)\gamma(x,y)\gamma(y,z)-u(z)\gamma(z,y)\gamma(y,x)\,\mathrm{d}z\right)+u(x)\alpha(y)\gamma(x,y).
    \end{align*}
    Therefore it remains to show that the following three integrals are all finite
    \begin{align*}
        &\int_{\Gamma}\left\vert c(y)g(y)\gamma(y,x)\right\vert\,\mathrm{d}y,\\
        &\int_{\Gamma}c(y)\left((1-\alpha(y))\int_{\Omega}\left\vert u(x)\gamma(x,y)\gamma(y,z)-u(z)\gamma(z,y)\gamma(y,x)\right\vert\,\mathrm{d}z\right)\,\mathrm{d}y,\\
        \text{and }& \int_{\Gamma}\left\vert c(y)\alpha(y)\gamma(x,y)\right\vert\,\mathrm{d}y.
    \end{align*}
    Because of $\frac{1}{c(y)}\geqslant \min \{(1-\alpha(y))\int_{\Omega}\gamma(y,v)\,\mathrm{d}v,\,\alpha(y) \}$ for $y\in\Gamma$, we get
    \begin{equation*}
        \int_{\Gamma}c(y)\alpha(y) \gamma(x,y)\,\mathrm{d}y\leqslant\int_{\Gamma}\gamma(x,y)\,\mathrm{d}y<\infty.
    \end{equation*}
    And since $\int_{\Omega}\int_{\Gamma}\frac{\vert g(y)\vert\gamma(y,x)}{\int_{\Omega}\gamma(y,v)\,\mathrm{d}v}\,\mathrm{d}y\,\mathrm{d}x=\int_{\Gamma}\vert g(y)\vert\mathrm{d}y<\infty$ holds, we obtain
    \begin{align*}
        &\int_{\Gamma}\left\vert c(y)g(y)\gamma(y,x)\right\vert\,\mathrm{d}y\\
        \leqslant&\int_{\Gamma}c(y)(1-\alpha(y))\vert g(y)\vert\gamma(y,x)\,\mathrm{d}y+\int_{\Gamma}c(y)\alpha(y)\vert g(y)\vert\gamma(y,x)\,\mathrm{d}y\\
        \leqslant&\int_{\Gamma}\dfrac{\vert g(y)\vert\gamma(y,x)}{\int_{\Omega}\gamma(y,v)\,\mathrm{d}v}\,\mathrm{d}y+\int_{\Gamma}\vert g(y)\vert\gamma(y,x)\,\mathrm{d}y\\
        \leqslant&\int_{\Gamma}\dfrac{\vert g(y)\vert\gamma(y,x)}{\int_{\Omega}\gamma(y,v)\,\mathrm{d}v}\,\mathrm{d}y+\int_{\Gamma}\vert g(y)\vert\,\mathrm{d}y\|\gamma(\cdot,x)\|_{\mathrm{L}^{\infty}(\Gamma)}\\
        <&\infty.
    \end{align*}
    Finally we have for a.e.\;$(y,z)\in\Gamma\times\Omega$ that
    \begin{align*}
        &\left\vert u(x)\gamma(x,y)\gamma(y,z)-u(z)\gamma(z,y)\gamma(y,x)\right\vert\\
        =&\left\vert u(x)\gamma(x,y)\gamma(y,z)-u(y)\gamma(y,x)\gamma(y,z)+u(y)\gamma(y,x)\gamma(y,z)-u(z)\gamma(z,y)\gamma(y,x)\right\vert\\
        \leqslant&\left\vert u(x)\gamma(x,y)-u(y)\gamma(y,x)\right\vert\gamma(y,z)+\left\vert u(y)\gamma(y,z)-u(z)\gamma(z,y)\right\vert\gamma(y,x)
    \end{align*}
    and
    \begin{equation*}
        c(y)(1-a(y))\leqslant\frac{1}{\int_{\Omega}\gamma(y,v)\,\mathrm{d}v}
    \end{equation*}
    holds. Due to
    \begin{align*}
        &\int_{\Omega}\int_{\Gamma}c(y)\left((1-\alpha(y))\int_{\Omega}\left\vert u(x)\gamma(x,y)\gamma(y,z)-u(z)\gamma(z,y)\gamma(y,x)\right\vert\,\mathrm{d}z\right)\,\mathrm{d}y\,\mathrm{d}x\\
        \leqslant &2 \int_{\Omega}\int_{\Gamma}\left\vert u(x)\gamma(x,y)-u(y)\gamma(y,x)\right\vert\,\mathrm{d}y\,\mathrm{d}x<\infty.
    \end{align*}
    we, without loss of generality, conclude 
    \begin{equation*}
        \int_{\Gamma}c(y)\left((1-\alpha(y))\int_{\Omega}\left\vert u(x)\gamma(x,y)\gamma(y,z)-u(z)\gamma(z,y)\gamma(y,x)\right\vert\,\mathrm{d}z\right)\,\mathrm{d}y<\infty.
    \end{equation*}
    The rest follows then by Fubini's Theorem.
\end{proof} 

We now exploit the result of \Cref{thm:reg} to reformulate the nonlocal Robin problem \eqref{RP} into the equivalent \textit{regional problem}
\begin{equation*}\tag{REG}\label{reg}
    \left\{\mathcal{L}_{\gamma_{\alpha}}u(x)+\gamma_{\alpha,\Omega}(x)u(x)=f(x)+g_{\Gamma}(x) \quad\text{for }x\in\Omega,\right.
\end{equation*}
where for $(x,z)\in\Omega\times\Omega$ we set
\begin{align*}
      g_{\Gamma}(x)&:= \displaystyle{\int_{\Gamma}\dfrac{g(y)\gamma(y,x)}{(1-\alpha(y))\int_{\Omega}\gamma(y,v)\,\mathrm{d}v+\alpha(y)}\,\mathrm{d}y},\\
     \gamma_{\alpha}(x,z)&:=\gamma(x,z)+\int_{\Gamma}\frac{(1-\alpha(y))\gamma(x,y)\gamma(y,z)}{(1-\alpha(y))\int_{\Omega}\gamma(y,v)\,\mathrm{d}v+\alpha(y)}\,\mathrm{d}y\\
     \text{and}\quad\gamma_{\alpha,\Omega}(x)&:=\displaystyle{\int_{\Gamma}\frac{\alpha(y)\gamma(x,y)}{(1-\alpha(y))\int_{\Omega}\gamma(y,z)\,\mathrm{d}z+\alpha(y)}\,\mathrm{d}y}+\displaystyle{\int_{\widehat{\Gamma}\setminus\Gamma}\gamma(x,y)\,\mathrm{d}y}.
\end{align*}
We note that the regional problem \eqref{reg} is of the form
\begin{equation*}
    \left\{\mathcal{L}_{\eta}u(x)+\lambda(x)u(x)=\widetilde{f}(x)\quad\text{for }x\in\Omega,\right.
\end{equation*}
where $\eta\in\kernel$ vanishes identically outside $\Omega\times\Omega$ and both $\lambda\colon\Omega\to[0,\infty)$ and $\widetilde{f}\colon\Omega\to\mathbb{R}$ are measurable. We recapitulate:

\begin{definition}
     Let $\Omega\subset\Rd$ be an open, nonempty subset and $\gamma\in\kernel$ with $\widehat{\Gamma}=\Gamma$,
    \begin{align*}
        &\|\gamma(\cdot,x)\|_{\mathrm{L}^{\infty}(\Gamma)}+\int_{\Gamma}\gamma(x,y)\,\mathrm{d}y<\infty, \quad\text{for }x\in\Omega\\
        \text{and}\quad &\|\gamma(\cdot,y)\|_{\mathrm{L}^{\infty}(\Omega)}+\int_{\Omega}\gamma(y,x)\,\mathrm{d}x<\infty,\quad \text{for }y\in\Gamma.
    \end{align*}
    Let $f\colon \Omega\to\mathbb{R}$ and $g\colon \Gamma\to\mathbb {R}$ be given and set, as before, $ g_{\Gamma}$, $\gamma_{\alpha}$ and $\gamma_{\alpha,\Omega}$. Then the measurable function $u\colon \Omega\to\mathbb{R}$ is a regional solution to the Robin problem \eqref{RP}, if $u$ is a solution of \eqref{reg}.
    
    And $u\in\mathrm{V}(\Omega;\gamma_{\alpha})$ is a weak regional solution to the Robin problem \eqref{RP}, if $u$ is a weak solution (in the sense of \Cref{de:weak_s}) of the regional problem \eqref{reg}.
\end{definition}

In \Cref{rem:trivial_neum_case} and \Cref{thm:nonsym} well-posedness results regarding problem \eqref{reg} are given. Furthermore, for the time--dependent case the regional solution has also been studied by Cortazar et al.\,\cite{CORTAZAR2007360}. Because $\gamma_{\alpha,\Omega}$ is in general not an element of $\mathrm{L}^{2}(\Omega)$, we now study a slightly different test function space.
\begin{theorem}\label{thm:reg_hilb_hilf}
     Let $\Omega\subset\Rd$ be an open, nonempty subset, $\lambda\colon\Omega\to[0,\infty)$ be a measurable function and let  $\gamma\in\kernel$ be vanishing identically outside $\Omega\times\Omega$. Then $\{v\in\V\colon \int_{\Omega}v^{2}(x)\lambda(x)\,\mathrm{d}x<\infty\}$ is a Hilbert space with respect to the inner product
     \begin{equation*}
        \langle u,v\rangle_{1}=\int_{\Omega}u(x)v(x)(1+\lambda(x))\,\mathrm{d}x+\int_{\Omega}\int_{\Omega}(u(x)-u(y))(v(x)-v(y))\gamma(y,x)\,\mathrm{d}y\,\mathrm{d}x.
     \end{equation*}
\end{theorem}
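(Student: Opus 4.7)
The plan is to proceed in three stages. Write $W := \{v \in \V \colon \int_\Omega v^2\lambda\,\mathrm{d}x < \infty\}$.

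First I would verify that $\langle \cdot, \cdot\rangle_1$ defines an inner product on $W$. Bilinearity and symmetry are immediate from the definition. Cauchy--Schwarz in the weighted space $L^2(\Omega;(1+\lambda)\,\mathrm{d}x)$ and in $L^2(\Omega\times\Omega;\gamma\,\mathrm{d}y\,\mathrm{d}x)$ bounds each summand, so the form is well-defined on $W\times W$. For positive definiteness, observe that since $\gamma$ vanishes identically outside $\Omega\times\Omega$, the set $\Gamma$ from \eqref{eq:DefinitionGamma} is a null set, so equivalence classes in $\V$ are determined by a.e.\,equality on $\Omega$. Hence $\langle v,v\rangle_1 = 0$ forces $\int_\Omega v^2\,\mathrm{d}x = 0$, i.e., $[v] = 0$ in $\V$.

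Next I would decompose the norm. The key identity, again using that $\gamma$ vanishes outside $\Omega\times\Omega$, is
\[
    \|v\|_1^2 = \|v\|_{\V}^2 + \int_\Omega v^2(x)\lambda(x)\,\mathrm{d}x.
\]
Thus any $\|\cdot\|_1$-Cauchy sequence $(v_n)_{n\in\mathbb{N}} \subset W$ is simultaneously Cauchy in $\V$ and in the weighted space $L^2(\Omega;\lambda\,\mathrm{d}x)$. By \Cref{thm:comp}, there is a $v\in\V$ with $v_n\to v$ in $\V$; by completeness of $L^2(\Omega;\lambda\,\mathrm{d}x)$, there is an element $\tilde v$ of this weighted space with $v_n\to\tilde v$ there.

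The only delicate step, and what I expect to be the main (mild) obstacle, is to identify $\tilde v$ with $v$ on $\{\lambda > 0\}$. I would extract a subsequence along which $v_{n_k}\to v$ pointwise a.e.\,on $\Omega$ (available from $L^2(\Omega)$-convergence of $v_n$ to $v$), and a further subsequence along which $\sqrt{\lambda}\, v_{n_k}\to \sqrt{\lambda}\,\tilde v$ pointwise a.e.\,on $\Omega$. On $\{\lambda > 0\}$ the two pointwise limits must agree, so $\tilde v = v$ a.e.\,there, which yields $\int_\Omega v^2\lambda\,\mathrm{d}x < \infty$ and hence $v\in W$. Combining the two modes of convergence gives $\|v_n-v\|_1\to 0$, establishing completeness.
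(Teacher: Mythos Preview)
Your proof is correct and follows essentially the same strategy as the paper: decompose $\|\cdot\|_1^2$ into the $\V$-norm plus the weighted $L^2$-piece, use completeness of $\V$ (the paper cites \Cref{corollary:hilb}) to obtain the limit $v$, and then show $v$ lies in $W$ with convergence in the weighted part. The only minor difference is in the handling of the weighted term: you invoke completeness of $L^2(\Omega;\lambda\,\mathrm{d}x)$ and identify the two limits via a.e.\ subsequences, whereas the paper passes to an a.e.\ convergent subsequence once and applies Fatou's lemma twice---first to get $\int_\Omega v^2\lambda<\infty$, then to get $\int_\Omega(v-v_n)^2\lambda\to 0$---which sidesteps the separate limit $\tilde v$ altogether.
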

\begin{proof}
    We now show that the norm induced by the inner product is complete. So let $(u_{n})_{n\in\mathbb{N}}$ be a Cauchy sequence in  $\{v\in\V\colon \int_{\Omega}v^{2}(x)\lambda(x)\,\mathrm{d}x<\infty\}$. Then $(u_{n})_{n\in\mathbb{N}}$ is Cauchy in $\V$, and by \Cref{corollary:hilb} converges to $u\in\V$ with respect to $\|\cdot\|_{\V}$. Without loss of generality we assume that $u_{n}$ converges a.e.\;in $\Omega$ to $u$. Therefore the Lemma of Fatou yields
    \begin{equation*}
        \int_{\Omega}u^{2}(x)\lambda(x)\,\mathrm{d}x\leqslant\liminf_{n\to\infty}\int_{\Omega}u_{n}^{2}(x)\lambda(x)\,\mathrm{d}x\leqslant \sup_{n\in\mathbb{N}}\int_{\Omega}u_{n}^{2}(x)\lambda(x)\,\mathrm{d}x<\infty.
    \end{equation*}
    Now that we have $u\in\{v\in\V\colon \int_{\Omega}v^{2}(x)\lambda(x)\,\mathrm{d}x<\infty\} $, we use the Lemma of Fatou again to obtain
    \begin{equation*}
        \lim_{n\to\infty}\int_{\Omega}(u(x)-u_{n}(x))^{2}\lambda(x)\,\mathrm{d}x\leqslant\lim_{n\to\infty}\liminf_{m\to\infty}\int_{\Omega}(u_{m}(x)-u_{n}(x))^{2}\lambda(x)\,\mathrm{d}x=0.
    \end{equation*}
    All in all we get that $\{v\in\V\colon \int_{\Omega}v^{2}(x)\lambda(x)\,\mathrm{d}x<\infty\}$ is a Hilbert space with respect to the inner product
     \begin{equation*}
        \langle u,v\rangle_{1}=\int_{\Omega}u(x)v(x)(1+\lambda(x))\,\mathrm{d}x+\int_{\Omega}\int_{\Omega}(u(x)-u(y))(v(x)-v(y))\gamma(y,x)\,\mathrm{d}y\,\mathrm{d}x
     \end{equation*}
     for $u,v \in \{v\in\V\colon \int_{\Omega}v^{2}(x)\lambda(x)\,\mathrm{d}x<\infty\}$.
\end{proof}
Finally we compare our new test function space with $\V$.
\begin{theorem}
    Let $\Omega\subset\Rd$ be an open, nonempty subset and $\gamma\in\kernel$ satisfy $\widehat{\Gamma}=\Gamma$,
    \begin{align*}
        &\|\gamma(\cdot,x)\|_{\mathrm{L}^{\infty}(\Gamma)}+\int_{\Gamma}\gamma(x,y)\,\mathrm{d}y<\infty \quad\text{for }x\in\Omega \\
        \text{and } &\|\gamma(\cdot,y)\|_{\mathrm{L}^{\infty}(\Omega)}+\int_{\Omega}\gamma(y,x)\,\mathrm{d}x<\infty\quad \text{for }y\in\Gamma.
    \end{align*}
    Furthermore let $\alpha\colon\Gamma\to[0,1]$ be measurable and set 
    \begin{equation*}
        \mathrm{V}_{1-\alpha}(\Omega;\gamma):=\{v\in\mathrm{V}(\Omega;\gamma_{\alpha})\colon \int_{\Omega}v^{2}(x)\gamma_{\alpha,\Omega}(x)\,\mathrm{d}x<\infty\},
    \end{equation*}
    where 
    \begin{align*}
         \gamma_{\alpha}(x,z)&:=\gamma(x,z)+\int_{\Gamma}\frac{(1-\alpha(y))\gamma(x,y)\gamma(y,z)}{(1-\alpha(y))\int_{\Omega}\gamma(y,v)\,\mathrm{d}v+\alpha(y)}\,\mathrm{d}y&\\
        \text{and}\quad\gamma_{\alpha,\Omega}(x)&:=\displaystyle{\int_{\Gamma}\frac{\alpha(y)\gamma(x,y)}{(1-\alpha(y))\int_{\Omega}\gamma(y,z)\,\mathrm{d}z+\alpha(y)}\,\mathrm{d}y}\quad &
    \end{align*}
    for $(x,z)\in\Omega\times\Omega$. Then:
    \begin{enumerate}[label=(\roman*)]
        \item We have $\gamma_{1}=\gamma$ in $\Omega\times\Omega$  and $\gamma_{0,\Omega}=0$ in $\Omega$.
        \item $\mathrm{V}_{1-\alpha}(\Omega;\gamma)$ is a Hilbert space with respect to the inner product
        \begin{align*}
            \langle u,v\rangle_{\mathrm{V}_{1-\alpha}(\Omega;\gamma)}:=&\int_{\Omega}u(x)v(x)(1+\gamma_{\alpha,\Omega}(x))\,\mathrm{d}x\\
            &+\int_{\Omega}\int_{\Omega}(u(x)-u(y))(v(x)-v(y))\gamma_{\alpha}(y,x)\,\mathrm{d}y\,\mathrm{d}x.
        \end{align*}
        \item $\Vd$ and $\{v\in\V\colon \Tr(v)=0\}$ are isomorphic, in other words there is a bijective and bounded operator $T\colon\Vd\to\{v\in\V\colon \Tr(v)=0\}$ with
        \begin{equation*}
            \langle u,v\rangle_{\Vd}=\langle T u, T v\rangle_{\V},\quad \text{ for } u,v\in\Vd.
        \end{equation*}
        \item $\mathrm{V}_{1}(\Omega;\gamma)$ is continuously embedded in $\V$, namely there is a constant $C>0$ such that for any $u\in\mathrm{V}_{1}(\Omega;\gamma)$ there exists $\widetilde{u}\in\V$ with $u=\widetilde{u}$ in $\Omega$ and
        \begin{equation*}
            \|\widetilde{u}\|_{\V}\leqslant C  \|u\|_{\mathrm{V}_{1}(\Omega;\gamma)}.
        \end{equation*}
        \item $\V$  is continuously embedded in $\mathrm{V}_{1}(\Omega;\gamma)$, namely there is a constant $C>0$ such that for any $u\in\V$ we have $u\vert_{\Omega}\in \mathrm{V}_{1}(\Omega;\gamma)$ and
        \begin{equation*}
            \|u\|_{\mathrm{V}_{1}(\Omega;\gamma)}\leqslant C \|u\|_{\V} .
        \end{equation*}
    \end{enumerate}
\end{theorem}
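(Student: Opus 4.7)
My plan is to dispose of the five parts in order, reusing the Hilbert space machinery already assembled earlier in the paper. Part (i) is a direct substitution in the defining formulas of $\gamma_\alpha$ and $\gamma_{\alpha,\Omega}$: for $\alpha\equiv 1$ the factor $1-\alpha(y)$ kills the integrand defining $\gamma_\alpha$ beyond the copy of $\gamma$, while for $\alpha\equiv 0$ the factor $\alpha(y)$ kills the integrand of $\gamma_{\alpha,\Omega}$ and the summand $\int_{\widehat\Gamma\setminus\Gamma}\gamma(x,y)\,\mathrm{d}y$ vanishes thanks to the hypothesis $\widehat\Gamma=\Gamma$. For (ii) I would apply \Cref{thm:reg_hilb_hilf} with the regional kernel $\gamma\leftarrow\gamma_\alpha$ (which vanishes outside $\Omega\times\Omega$ by construction) and measurable weight $\lambda\leftarrow\gamma_{\alpha,\Omega}\geqslant 0$; the inner product produced there coincides, by inspection, with the one claimed in the statement.

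For (iii), the operator $T$ is the canonical identification sending $u\in\Vd$ to its class $[u]\in\V$. If $u\in\Vd$ then $u=0$ on $\Gamma\subset\Rd\setminus\Omega$, so $\Tr(Tu)=0$; conversely, a class $[v]\in\V$ with $\Tr(v)=0$ has a representative vanishing a.e.\ on $\Gamma$, which I further extend by $0$ on $\Rd\setminus(\Omega\cup\Gamma)$ to obtain an element of $\Vd$. The two inner products agree because $\gamma(y,x)=0$ whenever $x\in\Omega$ and $y\in\Rd\setminus\widehat\Gamma$, and since $\widehat\Gamma=\Gamma$ by hypothesis, the outer $\Rd$-integral in $\langle\cdot,\cdot\rangle_\V$ collapses to one over $\Omega\cup\Gamma$.

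The heart of the argument is in (iv) and (v), where I exploit the standing symmetry of $\gamma$ in \Cref{sec:reg}. For (iv), given $u\in\mathrm{V}_1(\Omega;\gamma)=\mathrm{V}(\Omega;\gamma_0)$, I take the Neumann extension $\widetilde u(y):=W(y)^{-1}\int_\Omega u(z)\gamma(z,y)\,\mathrm{d}z$ on $\Gamma$, $\widetilde u=u$ on $\Omega$ and $\widetilde u=0$ elsewhere, where $W(y):=\int_\Omega\gamma(y,v)\,\mathrm{d}v$. Symmetry allows me to write $u(x)-\widetilde u(y)=W(y)^{-1}\int_\Omega(u(x)-u(z))\gamma(z,y)\,\mathrm{d}z$, and Jensen's inequality against the probability measure $\gamma(z,y)\,\mathrm{d}z/W(y)$ gives
\begin{equation*}
\int_\Omega\int_\Gamma(u(x)-\widetilde u(y))^2\gamma(y,x)\,\mathrm{d}y\,\mathrm{d}x\leqslant\int_\Omega\int_\Omega(u(x)-u(z))^2\bigl(\gamma_0(x,z)-\gamma(x,z)\bigr)\,\mathrm{d}z\,\mathrm{d}x\leqslant\|u\|^2_{\mathrm{V}_1(\Omega;\gamma)}.
\end{equation*}
Combined with the trivial bound $\gamma\leqslant\gamma_0$ on $\Omega\times\Omega$, this yields $\|\widetilde u\|_\V\leqslant C\|u\|_{\mathrm{V}_1(\Omega;\gamma)}$. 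For (v), the only new piece beyond $\|u\|_\V^2$ in $\|u\|_{\mathrm{V}_1(\Omega;\gamma)}^2$ is $I:=\int_\Omega\int_\Omega(u(x)-u(y))^2\int_\Gamma\gamma(y,s)\gamma(s,x)W(s)^{-1}\,\mathrm{d}s\,\mathrm{d}y\,\mathrm{d}x$; applying $(u(x)-u(y))^2\leqslant 2(u(x)-u(s))^2+2(u(s)-u(y))^2$ followed by Fubini, one of the remaining integrations (over $x$ or over $y$) produces $\int_\Omega\gamma(s,x)\,\mathrm{d}x=W(s)$ or, by symmetry, $\int_\Omega\gamma(y,s)\,\mathrm{d}y=W(s)$, which cancels the denominator and reduces each piece to a multiple of $\int_\Omega\int_\Gamma(u(x)-u(s))^2\gamma(s,x)\,\mathrm{d}s\,\mathrm{d}x\leqslant\|u\|_\V^2$. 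The main obstacle is this Jensen--plus--Fubini bookkeeping: the definition of $\gamma_0$ is tailor-made so that the auxiliary integrations collapse exactly against the denominator $W(s)$, and one needs to check that the integrability hypotheses on $\gamma$ justify every Fubini swap along the way.
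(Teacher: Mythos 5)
Your proposal follows essentially the same route as the paper's proof. Parts (i)--(ii) are handled identically (substitution, then the auxiliary Hilbert-space theorem); in (iii) your ``canonical identification'' is operationally the paper's zero-extension operator $E$ from $\Omega$ to $\Omega\cup\Gamma$, and the verification that the inner products agree is the same one-line computation; in (iv) you take exactly the paper's harmonic-type extension $\widetilde u$ on $\Gamma$ and run the same Jensen estimate against the probability measure $\gamma(z,y)\,\mathrm{d}z/W(y)$, arriving at the same bound via $\gamma\leqslant\gamma_0$; and in (v) the triangle-inequality-plus-Fubini cancellation of $W(s)$ in the denominator is precisely the argument in the paper. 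Two small remarks: your reading of $\Vd$ in (iii) treats it as the Section~5 subspace of $\V$ rather than the regional space $\mathrm{V}_{1-\alpha}(\Omega;\gamma)\vert_{\alpha=1}$ consisting of functions on $\Omega$ only, but since the resulting operator is the zero extension either way, nothing breaks; and you are right to flag that symmetry of $\gamma$ is used silently throughout (iv)--(v) to equate $\int_\Omega\gamma(s,x)\,\mathrm{d}x$ with $\int_\Omega\gamma(x,s)\,\mathrm{d}x$, an assumption carried from the surrounding Robin-problem context rather than stated in the theorem itself.
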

\begin{proof}
    While $(i)$ follows by definition, we obtain $(ii)$ as a consequence of \Cref{thm:reg_hilb_hilf}. 
    
    In order to show $(iii)$, we define the zero extension operator outside $\Omega$,
    \begin{equation*}
        E\colon\Vd\to\{v\in\V\colon \Tr (v)=0\},\,Eu (x)=\begin{cases} u(x),\quad &\text{for } x\in\Omega,\\0,\quad &\text{for } x\in\Gamma.\end{cases}
    \end{equation*}
    Then $E$ is a bijective operator with 
    \begin{align*}
            &\langle u,v\rangle_{\Vd}\\
            =&\int_{\Omega}u(x)v(x)(1+\gamma_{1,\Omega}(x))\,\mathrm{d}x+\int_{\Omega}\int_{\Omega}(u(x)-u(y))(v(x)-v(y))\gamma(y,x)\,\mathrm{d}y\,\mathrm{d}x&\\
            =&\int_{\Omega}E u(x)E v(x)\,\mathrm{d}x+\int_{\Omega}\int_{\Rd}(E u(x)-E u(y))(E v(x)-E v(y))\gamma(y,x)\,\mathrm{d}y\,\mathrm{d}x&\\
            =&\langle E u, E v\rangle_{\V}
    \end{align*}
    \text{for all } $u,v\in\Vd$.
    For $u\in\mathrm{V}_{1}(\Omega;\gamma) $ we define
    \begin{equation*}
        \widetilde{u}(x)=\begin{cases} u(x),\quad &\text{for } x\in\Omega,\\ \displaystyle{\int_{\Omega}\frac{u(z)\gamma(z,x)}{\int_{\Omega}\gamma(v,x)\,\mathrm{d}v}\,\mathrm{d}z},\quad &\text{for } x\in\Gamma.\end{cases}
    \end{equation*}
    Because for a.e.\;$x\in\Gamma$ we obtain
    \begin{equation*}
        \int_{\Omega}\frac{\vert u(z)\vert\gamma(z,x)}{\int_{\Omega}\gamma(v,x)\,\mathrm{d}v}\,\mathrm{d}z\leqslant\int_{\Omega}\frac{u^{2}(z)\gamma(z,x)}{\int_{\Omega}\gamma(v,x)\,\mathrm{d}v}\,\mathrm{d}z\leqslant \frac{\|u\|_{\mathrm{L}^{2}(\Omega)}\|\gamma(\cdot,x)\|_{\mathrm{L}^{\infty}(\Omega)}}{\int_{\Omega}\gamma(v,x)\,\mathrm{d}v}<\infty,
    \end{equation*}
    the extension $\widetilde{u}$ is well defined. Then we get by Hölder's inequality and Fubini's Theorem, that 
    \begin{align*}
        &\int_{\Omega}\int_{\Gamma}(\widetilde{u}(x)-\widetilde{u}(y))^{2}\gamma(y,x)\,\mathrm{d}y\,\mathrm{d}x\\
        =&\int_{\Omega}\int_{\Gamma}\left(u(x)-\int_{\Omega}\frac{u(z)\gamma(z,y)}{\int_{\Omega}\gamma(v,y)\,\mathrm{d}v}\,\mathrm{d}z\right)^{2}\gamma(y,x)\,\mathrm{d}y\,\mathrm{d}x\\
        =&\int_{\Omega}\int_{\Gamma}\left(\int_{\Omega}\frac{(u(x)-u(z))\gamma(z,y)}{\int_{\Omega}\gamma(v,y)\,\mathrm{d}v}\,\mathrm{d}z\right)^{2}\gamma(y,x)\,\mathrm{d}y\,\mathrm{d}x\\
        \leqslant&\int_{\Omega}\int_{\Omega}(u(x)-u(z))^{2}\int_{\Gamma}\frac{\gamma(y,x)\gamma(z,y)}{\int_{\Omega}\gamma(v,y)\,\mathrm{d}v}\,\mathrm{d}y\,\mathrm{d}z\,\mathrm{d}x
    \end{align*}
    holds and we therefore conclude
    \begin{align*}
        \|\widetilde{u}\|_{\V}^{2}&=\int_{\Omega}\widetilde{u}^{2}(x)\,\mathrm{d}x+\int_{\Omega}\int_{\Rd}(\widetilde{u}(x)-\widetilde{u}(y))^{2}\gamma(y,x)\,\mathrm{d}y\,\mathrm{d}x\\
        &\leqslant \int_{\Omega}u^{2}(x)\,\mathrm{d}x+\int_{\Omega}\int_{\Omega}(u(x)-u(y))^{2}\gamma_{0}(y,x)\,\mathrm{d}y\,\mathrm{d}x
        = \|u\|_{\mathrm{V}_{1}(\Omega;\gamma)}^{2}.
    \end{align*}
    For $v\in\V$ we have by Jensen's inequality
    \begin{align*}
        &\int_{\Omega}\int_{\Omega}(v(x)-v(z))^{2}\int_{\Gamma}\frac{\gamma(y,x)\gamma(z,y)}{\int_{\Omega}\gamma(v,y)\,\mathrm{d}v}\,\mathrm{d}y\,\mathrm{d}z\,\mathrm{d}x\\
        =&\int_{\Omega}\int_{\Omega}\int_{\Gamma}(v(x)-v(y)+v(y)-v(z))^{2}\frac{\gamma(y,x)\gamma(z,y)}{\int_{\Omega}\gamma(v,y)\,\mathrm{d}v}\,\mathrm{d}y\,\mathrm{d}z\,\mathrm{d}x\\
        \leqslant &2\int_{\Omega}\int_{\Gamma}(v(x)-v(y))^{2}\gamma(y,x)\,\mathrm{d}y\,\mathrm{d}x
    \end{align*}
    and therefore $\|v\|_{\mathrm{V}_{1}(\Omega;\gamma)}\leqslant 2 \|v\|_{\V}$.
\end{proof}

\bibliographystyle{siamplain}
\bibliography{references}
\end{document}